\documentclass[12pt, reqno]{amsart}

\usepackage{amssymb, amsmath, amsthm}
\usepackage[backref, colorlinks = true, linkcolor = blue, citecolor = blue, urlcolor=blue]{hyperref}
\usepackage[alphabetic,backrefs,lite]{amsrefs}
\usepackage{amscd}   
\usepackage{fullpage}
\usepackage[all]{xy} 
\usepackage{tikz-cd}
\usepackage{tikz}
\usepackage{comment}
\usepackage{colonequals}
\usepackage{MnSymbol}

\DeclareFontEncoding{OT2}{}{} 


\usepackage[]{xcolor}

\frenchspacing




\newtheorem{lemma}{Lemma}[subsection]
\newtheorem{theorem}[lemma]{Theorem}

\newtheorem{prop}[lemma]{Proposition}
\newtheorem{cor}[lemma]{Corollary}

\newtheorem{claim*}{Claim}
\newtheorem{thm}[lemma]{Theorem}

\theoremstyle{remark}
\newtheorem{remark}[lemma]{Remark}

\newtheorem{question}[lemma]{Question}
\newtheorem{example}[lemma]{Example}
\newtheorem{defn}[lemma]{Definition}

\newcommand{\A}{{\mathbb A}}

\newcommand{\PP}{{\mathbb P}}
\newcommand{\C}{{\mathbb C}}
\newcommand{\F}{{\mathbb F}}
\newcommand{\Q}{{\mathbb Q}}
\newcommand{\R}{{\mathbb R}}
\newcommand{\Z}{{\mathbb Z}}

\newcommand{\Xbar}{{\overline{X}}}
\newcommand{\Qbar}{{\overline{\Q}}}
\newcommand{\kbar}{{\overline{k}}}
\newcommand{\Fbar}{{\overline{F}}}
\newcommand{\Cbar}{{\overline{C}}}

\newcommand{\kk}{{\mathbf k}}


\newcommand{\calD}{{\mathcal D}}

\newcommand{\calL}{{\mathcal L}}

\newcommand{\calU}{{\mathcal U}}

\newcommand{\OO}{{\mathcal O}}


\newcommand{\frakp}{{\mathfrak p}}

\usepackage[mathscr]{euscript}

\DeclareMathOperator{\HH}{H}

\DeclareMathOperator{\rk}{rk}
\DeclareMathOperator{\Char}{char}

\DeclareMathOperator{\im}{im}

\DeclareMathOperator{\Hom}{Hom}

\DeclareMathOperator{\Gal}{Gal}

\DeclareMathOperator{\Res}{Res}

\DeclareMathOperator{\Br}{Br}

\DeclareMathOperator{\divv}{div}

\DeclareMathOperator{\Sym}{Sym}

\DeclareMathOperator{\Pic}{Pic}
\DeclareMathOperator{\Jac}{Jac}

\DeclareMathOperator{\Spec}{Spec}

\DeclareMathOperator{\rank}{rank}

\DeclareMathOperator{\hh}{h}
\DeclareMathOperator{\gon}{gon}
\DeclareMathOperator{\Stab}{Stab}
\DeclareMathOperator{\AV}{AV}

\newcommand{\Hilb}{\operatorname{Hilb}}
\newcommand{\mult}{\operatorname{mult}}



\newcommand{\degset}{\calD}
\newcommand{\dendegs}{\delta}
\newcommand{\potdendegs}{\wp}

\newcommand{\Ueno}{\operatorname{Ueno}}
\newcommand{\ind}{\operatorname{ind}}

\newcommand{\bd}{\mathbf{d}}

\numberwithin{equation}{subsection}
\numberwithin{table}{subsection}

\newcommand{\defi}[1]{\textsf{#1}} 
\setcounter{tocdepth}{2}
\newcounter{oldtocdepth}

\newcommand{\hidefromtoc}{%
  \setcounter{oldtocdepth}{\value{tocdepth}}%
  \addtocontents{toc}{\protect\setcounter{tocdepth}{-10}}%
}

\newcommand{\unhidefromtoc}{%
  \addtocontents{toc}{\protect\setcounter{tocdepth}{\value{oldtocdepth}}}%
}

\title{Isolated and parameterized points on curves}

\author{Bianca Viray}
\address{University of Washington, Department of Mathematics, Box 354350, Seattle, WA 98195, USA}
\email{bviray@uw.edu}
\urladdr{http://math.washington.edu/\~{}bviray}

\author{Isabel Vogt}
\address{Brown University, Department of Mathematics, Box 1917, 151 Thayer Street, Providence, RI 02912, USA}
\email{ivogt.math@gmail.com}
\urladdr{https://www.math.brown.edu/ivogt/}

 \subjclass[2020]{11G30}


\begin{document}

	\begin{abstract}
		We give a self-contained introduction to isolated points on curves and their counterpoint, parameterized points, that situates these concepts within the study of the arithmetic of curves.  In particular, we show how natural geometric constructions of infinitely many degree~\(d\) points on curves motivate the definitions of \(\PP^1\)- and AV-parameterized points and explain how a result of Faltings implies that there are only finitely many isolated points on any curve.  We use parameterized points to deduce properties of the density degree set and show that parameterized points of very low degree arise for a unique geometric reason. 
        The paper includes several examples that illustrate the possible behaviors of degree~\(d\) points.
	\end{abstract}

	\maketitle
	\hidefromtoc
\section{Introduction}
	Mordell's conjecture, proved by Faltings in 1983, asserts that any curve of genus at least~\(2\) has only finitely many points over any number field. This celebrated theorem is the prototypical example of the guiding philosophy of Diophantine geometry: ``geometry controls arithmetic''. In this example, the genus, a \emph{geometric} invariant which can be computed over~\(\mathbb{C}\), controls the arithmetic property of an infinitude of points over some number field.  

	While the infinitude of points over a number field is a very important arithmetic property, it is not the only one.  In some sense, the arithmetic of a curve \(C/\Q\) is encoded by all of its algebraic points \(C(\Qbar)\) equipped with an action of the absolute Galois group \(\Gal(\Qbar/\Q)\).  A Galois orbit in \(C(\Qbar)\) can also be viewed as a (scheme-theoretic) \defi{closed point} \(x\in C\).  Through this lens, the degree of the closed point is the size of the orbit (see Section~\ref{sec:schemey_def} for more details); in particular, the rational points are the same as the degree~\(1\) closed points.

	Considering closed points of arbitrary degree gives us more arithmetic information about \(C\).  So it is natural to ask: What is the analog of Mordell's Conjecture/Faltings's Theorem for degree~\(d\) points?  In other words, what geometric property of \(C\) controls the infinitude of degree~\(d\) points? The Mordell--Lang conjecture, proved by Faltings building on work of Vojta~\cites{Faltings-SubvarietiesOfAbelianVarieties, Faltings-GeneralLang, Vojta-Mordell}, implies that any infinite set of degree~\(d\) points occurs for a geometric reason: they arise in families parameterized by projective spaces or positive rank abelian varieties.  Bourdon, Ejder, Liu, Odumodu, and the first author~\cite{BELOV} observed that the Mordell--Lang Conjecture (applied repeatedly) implies the converse: any such parameterization gives an infinite collection of degree~\(d\) points. This led to the notion of isolated points, and in op.~cit., the authors study isolated points in the modular tower \(\{X_1(n)\}_{n\in \mathbb{Z}_{>0}}\). 
    
	The goal of this paper is to give a self-contained introduction to isolated points and their counterpoint, parameterized points, and situate these concepts in the study of the arithmetic of curves (not just modular curves).  We propose that the division of algebraic points into parameterized and isolated points gives a fruitful perspective for organizing and accessing the arithmetic of a curve, including characterizing sets of Zariski dense closed points.

    \subsection{Notation and conventions}\label{sec:Notation}

	Our primary interest is curves over number fields, so, for simplicity, we assume that our fields have characteristic \(0\).  We use \(F\) to denote an arbitrary characteristic \(0\) field, \(\Fbar\) to denote a fixed algebraic closure, and \(G_F\) to denote the absolute Galois group \(\Gal(\Fbar/F)\). We reserve \(k\) to denote a number field.

	In this paper, we use \defi{variety} to mean a separated scheme of finite type over a field \(F\).  A variety \(X/F\) is \defi{nice} if it is smooth, projective, and geometrically integral over~\(F\). For any extension \(F'/F\), we write \(X_{F'}\) for the base change of \(X\) to \(\Spec F'\) and write \(\Xbar := X_{\Fbar}\). A \defi{curve} is a variety of dimension \(1\) over a field. Throughout, we use \(C\) to denote a nice curve. 

    For any point \(x\in X\), we write \(\kk(x)\) for the residue field of \(x\) (i.e., \(R_{\frakp}/\frakp R_{\frakp}\), where \(x\) corresponds to the prime ideal \(\frakp\) in an affine open \(\Spec R \subset X\)); recall that \(\kk(x)/F\) is an extension with transcendence degree equal to the dimension of the Zariski closure of \(x\). 

    We use \(\subset\) and \(<\) to denote (not necessarily strict) containment of subsets and subgroups, respectively.
    \subsection{The main objects of study}
    \label{sec:Degreed}\label{sec:schemey_def}\label{sec:classical_def}
%
		Given a nice variety \(X\), a \defi{closed point} is a scheme theoretic point that is closed in the Zariski topology, or, equivalently, an integral \(0\)-dimensional subscheme of \(X\). From a classical viewpoint, we may think of a closed point as a Galois orbit of a geometric point, i.e., a closed point \(x\in X\) corresponds to an orbit \(O_x := \{\sigma(y) : \sigma\in G_F\}\) for some \(y = y_x\in X(\Fbar)\).  
		
		For any closed point \(x\in X\) its residue field \(\kk(x)\)  is isomorphic to the fixed field of the stabilizer \(\Stab_{\sigma(y)}\subset  G_F\) of some element \(\sigma(y)\in O_x\). Note that the fixed field of the stabilizer \(\Stab_{\sigma(y)}\subset  G_F\) is an \emph{embedded} field, i.e., a particular subfield of \(\Fbar\) so we have made a choice in how the abstract field \(\kk(x)\) is embedded in \(\Fbar\).  Different choices of this embedding correspond to different choices of elements \(\sigma(y)\in O_x\). The \defi{degree} of a closed point \(x\in X\) is the degree of this field extension \([\kk(x) : F]\); this is also equal to the cardinality of \(O_x\).
 
        \subsubsection{Degree sets, index, and density degrees}\label{sec:DegreeSets}
        There are several important invariants of a variety capturing the behavior of degree~\(d\) points.

        \begin{defn}
        Let \(X\) be a nice variety defined over a field \(F\).  
        \begin{enumerate}
            \item The \defi{degree set} \(\calD(X/F)\) is the set of all degrees of closed points on \(X\).
            \item The \defi{index} \(\ind(X/F)\) is the greatest common divisor of the integers in \(\calD(X/F)\).
            \item The \defi{density degree set} \(\dendegs(X/F)\) is the set of all positive integers \(d\) such that the set of degree~\(d\) points on \(X\) is Zariski dense.
            \item The \defi{minimum density degree}\footnote{This invariant was introduced by Smith and the second author~\cite{Smith--Vogt} and was previously referred to as the arithmetic degree of irrationality.} \(\min(\dendegs(X/F))\) is the smallest positive integer in \(\dendegs(X/F)\).
        \end{enumerate}
        \end{defn}
        \begin{remark}
        If \(k\) is a number field, then the strategy of  proof of \cite{GabberLiuLorenzini}*{Proposition 7.5} can be used to prove that \(\gcd(\dendegs(X/k)) = \ind(X/k)\), and so we do not need to introduce this as a separate invariant. We give the proof of this statement in Appendix~\ref{app:GCDdendegs}.
        \end{remark}

        We always have the containment \(\dendegs(X/F) \subset  \calD(X/F)\), and consequently, the index divides the minimum density degree.
        The sets \(\calD(X/F)\) and \(\dendegs(X/F)\) are sensitive to the ground field \(F\); for example, if \(F = \Fbar\), then they only contain the integer \(1\)!  More interestingly, if \(F\) is not algebraically closed, then there is no general containment between \(\calD(X/F')\) and \(\calD(X/F)\) under extensions \(F'/F\); for example, if \(X\) is a pointless conic over \(\R\), then \(\calD(X/\R) = \dendegs(X/\R) = \{2\}\) but \(\calD(X/\C) = \dendegs(X/\C) = \{1\}\).
        For this reason, we will also be concerned with the following geometric versions of these invariants that are insensitive to finite extensions.

\begin{defn}Let \(X\) be a nice variety defined over a field \(F\).
    \begin{enumerate}
        \item The \defi{potential density degree set} \(\potdendegs(X/F)\) is the union of \(\dendegs(X/F')\) as \(F'/F\) ranges over all possible finite extensions.
        \item The \defi{minimum potential density degree} is the smallest positive integer in  \(\potdendegs(X/F)\).
    \end{enumerate}
    \end{defn}

    \subsection{A first example: Quadratic points on hyperelliptic curves}\label{sec:ConstructionHyperellipticCurves}
        One of the main goals of the paper is to understand the geometry of how dense sets of degree~\(d\) points arise. Consider a curve \(C/\Q\) that is given by an equation of the form \(y^2 = f(x)\) for \(f(x)\) a separable polynomial of degree~\(2g + 2 \geq 6\); such a curve is hyperelliptic and has genus \(g\geq 2\).
	As discussed in the introduction, Faltings's Theorem (Theorem~\ref{thm:Faltings83}) tells us that \(C\) has only finitely many points over any number field \(k\).  \emph{However}, \(C\) evidently has infinitely many degree at most \(2\) points: for any choice of \(x\)-value in \(\Q\), solving for \(y\) requires at worst a quadratic extension. 
    Further, by Faltings's Theorem, there are only finitely many rational (i.e., degree~\(1\)) points  on \(C\), so the degree (exactly)~\(2\) points must be Zariski dense!  Thus \(2 \in \dendegs(C/k)\).
 
Since we can arrange for \(C\) to have any genus at least \(2\), the genus is \textbf{not} a fine enough geometric invariant to detect the Zariski density of degree~\(d\) points for \(d \geq 2\). Nevertheless, this infinite set of degree~\(2\) points has some evident geometric structure.  Intuitively, all of these points ``came from \(\mathbb{P}^1\)''.  We make this precise with the following definition.
		\begin{defn}\label{def:P1param}
			Let \(C\) be a nice curve over a number field \(k\).  
			
			A closed point \(x\in C\) is \defi{\(\PP^1\)-parameterized} if there exists a morphism \(\pi\colon C \to \PP^1\) with \(\deg(\pi) = \deg(x)\) and \(\pi(x)\in \PP^1(k)\).

			A closed point \(x\in C\) is \defi{\(\PP^1\)-isolated} if it is not \(\PP^1\)-parameterized.
		\end{defn}

Similarly to the \(d=2\) case above, Faltings's Theorem implies that if there is a degree~\(3\) morphism \(C\to \PP^1\), then \(3 \in \dendegs(C/k)\) and \(C\) has a degree~\(3\) \(\PP^1\)-parameterized point.  In the body of the paper, we will see how to extend this to larger degrees, as well as to study degree~\(d\) points that are not \(\PP^1\)-parameterized; these extensions will require tools coming from parameter spaces, the Mordell--Lang Conjecture and Hilbert's Irreducibility Theorem.

	\subsection{Organization of paper}
        We begin with background sections which give an overview of the objects, methods, and results used in the study of closed points.\footnote{This extensive background does defer the main body of the paper, which is a decision we made to make the paper as self-contained as possible. To mitigate this, on a first reading, the reader may wish to scan or skip these background sections and return to read in more detail after first gaining an appreciation of how these tools are used in the body of the paper.} In Section~\ref{sec:ToolsAllDim}, we introduce tools that can be used for studying closed points on varieties of arbitrary dimension, including the symmetric product and the Hilbert scheme and their relation to closed points, and Galois groups of closed points. In Section~\ref{sec:ToolsCurves}, we specialize to the case of curves where viewing closed points as divisors gives additional methods and perspectives. In particular, we review the relevant fundamentals of divisors, linear systems, and the Abel--Jacobi map and review the results of Faltings on rational points on curves and rational points on subvarieties of abelian varieties. In the last subsection (Section~\ref{sec:HilbIrred}), we review some results which show when finite maps to curves of genus \(0\) or \(1\) give rise to dense sets of degree~\(d\) points (the ingredients of the proof are discussed in further detail in Appendix~\ref{app:HilbertIrred}).

		Sections~\ref{sec:IntroToParam}--\ref{sec:SingleSource} form the body of the paper. In Section~\ref{sec:IntroToParam}, we describe constructions of infinitely many degree~\(d\) points, which we use as inspiration for the definitions of \(\PP^1\)-parameterized and AV-parameterized points.  We also define the complementary notions of \(\PP^1\)-isolated and AV-isolated points, which were first introduced in~\cite{BELOV}, and relate these notions to the so-called Ueno locus of a subvariety of an abelian variety. In Section~\ref{sec:Dichotomy}, we show how one of the results of Faltings together with the Riemann--Roch Theorem imply that there are only finitely many isolated points on any curve.  (Note that isolated and parameterized are properties that apply to \emph{any} closed point on a curve, of which there are always infinitely many.)  
        In Section~\ref{sec:DensityDegrees}, we use the definition of parameterized points to deduce properties of the density degrees and relate the minimum density degree to the gonality. In Section~\ref{sec:SingleSource}, we study parameterized points whose degree is very small (relative to the genus of the curve), and show that any such points stem from a single low degree morphism to another curve.

        The results in Sections~\ref{sec:IntroToParam}--\ref{sec:SingleSource} are often corollaries of prior work (which we indicate throughout) and foundational material. However, the perspective of parameterized points, isolated points, and density degree sets makes transparent properties that were not previously observed. In particular, to the best of our knowledge, Propositions~\ref{prop:covers_E}, \ref{prop:DensdegsClosedUnderScalarMult}, and~\ref{prop:dendegs_extension}, and Theorem~\ref{thm:single_source} are new.
   
        We close in Section~\ref{sec:Future} by highlighting some open questions and further directions.
\section*{Acknowledgements}
	Our perspectives on this topic were born in the joint works \cite{BELOV} and \cites{Smith--Vogt, Kadets--Vogt} respectively, and in discussions with audience members after reporting on that  work.
 The first author thanks her collaborators: Abbey Bourdon, \"{O}zlem Ejder, Yuan Liu, and Frances Odumodu. The second author thanks her collaborators, Geoffrey Smith and Borys Kadets, as well as Rachel Pries for recommending \cite{Debarre--Klassen} that piqued her interest in the area.

	The scope and content of the paper was clarified, honed, and polished through many conversations with others. We thank Tony Feng for pointing out that the recent work of Gao, Ge, and K\"uhne~\cite{GGK-UniformMordellLang} gives a uniform bound for the number of non-Ueno isolated points, and Ziyang Gao for helpful conversations about his joint work and for providing the reference in~\cite{ACGH} needed in the proof of Theorem~\ref{thm:Uniform}. We thank Nathan Chen for helpful discussions which led to Example~\ref{ex:DenDegsNotSemigroup} and for providing the reference~\cite{luroth} and Ariyan Javanpeykar for helpful discussions which led to Proposition~\ref{prop:Hilbert_E}. We thank James Rawson for the construction in Example~\ref{ex:rawson}, and thank Yufan Liu for pointing out an error in the previous version of Proposition~\ref{prop:SplittingUnderExtns} and Proposition~\ref{prop:dendegs_extension}. 
    We thank Maarten Derickx for helpful discussions about his work \cite{Derickx-SingleSource}, which is related to Theorem~\ref{thm:single_source}.
    We thank the participants of the \textit{Instructional workshop on rational points} in Groningen, the Netherlands,  \textit{Degree~d points on surfaces} workshop at AIM, and MATH581E: \textit{Algebraic points on curves} at the University of Washington for their questions and comments on the lectures based on this material. We also thank Abbey Bourdon, Borys Kadets, Filip Najman, Samir Siksek, Michael Stoll, and the anonymous referees for their comments on earlier drafts.

    The first author was supported in part by an AMS Birman Fellowship and NSF grant DMS-2101434, and thanks the UW ADVANCE's program \textit{Write Right Now} for providing excellent working conditions which facilitated the writing of this paper. The second author was supported in part by NSF grant DMS-2200655. 


\unhidefromtoc

\section{Tools for studying degree~\(d\) points on varieties}\label{sec:ToolsAllDim}
    \subsection{The Hilbert scheme and symmetric product}\label{sec:HilbSym}
        When studying degree~\(d\) points on a nice variety \(X/F\), one of the first reductions is to translate the problem to studying \emph{rational} points on an auxiliary variety \(Y_X\). The two standard choices of auxiliary varieties are the Hilbert scheme of points on \(X\) and the symmetric product of \(X\).
        \subsubsection{The Hilbert scheme of points}
        For any positive integer \(d\), there is a projective \(F\)-scheme, denoted \(\Hilb^d_X\), that is a fine moduli space for \defi{length}\footnote{The length of a \(0\)-dimensional scheme \(V/F\) is \(\dim_F H^0(V, \OO_V)\).} \(d\) subschemes on \(X\), i.e., 
        \begin{equation}\label{eq:hilb_functor_of_points}
        \Hilb^d_X(S) := \Hom_F(S, \Hilb^d_X) = \left\{\substack{\text{subschemes \(V \subset  X \times S\) proper flat over~\(S\)}\\ \text{ such that \(V\to S\) is finite of degree~\(d\)}}\right\}.
        \end{equation}
        This scheme \(\Hilb^d_X\) is commonly called the \defi{Hilbert scheme of \(d\) points} on \(X\) (although it would be more appropriate to call it the Hilbert scheme of length \(d\) subschemes of \(X\)).\footnote{More generally, for any polynomial \(P\) there exists a projective {Hilbert scheme} that parameterizes objects in \(X\) with Hilbert polynomial \(P\); see~ \cite{kollar-rational-curves}*{Chapter 1} for more details.} The functor of points \eqref{eq:hilb_functor_of_points} of \(\Hilb^d_X\) immediately implies a connection with degree~\(d\) points.
        
        \begin{lemma}\label{lem:HilbRationalPt}
            A degree~\(d\) point \(x \in X\) gives rise to a degree~\(1\) point on \(\Hilb^d_X\).
        \end{lemma}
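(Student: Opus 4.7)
The plan is to apply the functor of points description of $\Hilb^d_X$ directly to the subscheme $\{x\} \subset X$ associated to the closed point $x$. First, I would record that a closed point $x \in X$ of degree $d$ is, by definition, an integral $0$-dimensional closed subscheme of $X$ whose structure sheaf has global sections equal to $\kk(x)$, a field of dimension $d$ as an $F$-vector space. In particular the length of $\{x\}$ as a subscheme of $X$ is $\dim_F H^0(\{x\}, \OO_{\{x\}}) = [\kk(x):F] = d$.

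Next I would verify that $\{x\}$, viewed as a closed subscheme of $X = X \times_F \Spec F$, satisfies the conditions appearing in the functor of points in~\eqref{eq:hilb_functor_of_points} with $S = \Spec F$. Flatness over $\Spec F$ is automatic since the base is the spectrum of a field. Properness of $\{x\}$ over $\Spec F$ follows because $X$ is proper over $F$ (as part of being nice) and $\{x\} \hookrightarrow X$ is a closed immersion. Finiteness of degree~$d$ over $\Spec F$ is the content of the previous paragraph.

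Combining these observations, $\{x\}$ determines an element of $\Hilb^d_X(\Spec F) = \Hom_F(\Spec F, \Hilb^d_X)$, i.e., an $F$-morphism $\Spec F \to \Hilb^d_X$. The image of such a morphism is a point $y \in \Hilb^d_X$ whose residue field $\kk(y)$ is an $F$-subalgebra of $F$, hence equal to $F$, so $y$ has degree~$1$. This produces the promised degree~$1$ point.

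There is no real obstacle here: the statement is essentially an unwinding of the functor of points of $\Hilb^d_X$ together with the observation that a closed point is itself a length~$d$ subscheme. The only point worth flagging is being careful about the direction of the correspondence: one should check that the assignment $x \mapsto \{x\}$ lands in the correct moduli problem, rather than conflating the closed point $x$ with an $F$-rational point of $X$ (which it is not, unless $d = 1$).
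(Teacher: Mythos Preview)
Your proof is correct and follows exactly the same approach as the paper's: use the functor of points~\eqref{eq:hilb_functor_of_points} with \(S = \Spec F\) to produce an \(F\)-morphism \(\Spec F \to \Hilb^d_X\), hence a degree~\(1\) point. The paper's version is more terse, simply noting that \([\kk(x):F] = d\) and invoking~\eqref{eq:hilb_functor_of_points}, whereas you spell out the verification of flatness, properness, and finiteness---but the argument is the same.
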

        \begin{proof}
            By the definition of degree~\(d\) point given in Section~\ref{sec:schemey_def}, we have \([\kk(x):F] = d\).
            Hence \(x\) determines a map \(\Spec F \to \Hilb^d_X\) by \eqref{eq:hilb_functor_of_points}, which is a degree~\(1\) point on \(\Hilb^d_X\).
        \end{proof}
    
        On the other hand, applying \eqref{eq:hilb_functor_of_points} to \(S = \Hilb^d_X\), there is a universal length~\(d\) subscheme 
        \begin{center}
            \begin{tikzcd}
                 & X \times \Hilb^d_X  \arrow[dl, swap, "\pi_1"], \arrow[dr, "\pi_2"]& \calU^d_X \arrow[l, hook'] \arrow[d, "\pi_2|_{\calU^d_X}"] \\
                X && \Hilb_X^d
            \end{tikzcd}
        \end{center}
        over the Hilbert scheme of \(d\) points that corresponds to the identity in \(\Hilb_X^d(\Hilb^d_X)\).  The fibers of \(\pi_2|_{\calU^d_X}\) are precisely the possible length~\(d\) subschemes of \(X\).  So, as a partial converse to Lemma~\ref{lem:HilbRationalPt}, a degree~\(1\) point on \(\Hilb^d_X\) gives rise to a length~\(d\) subscheme of \(X\).

        \begin{remark}\label{rmk:HilbFunctor}
        The functor of points is very useful for studying geometric constructions of length~\(d\) subschemes, as we now explain. If \(\pi\colon X\to Y\) is a degree~\(d\) morphism, then for every \(y\in Y(k)\) the fiber \(\pi^*y = X_y\) gives a length~\(d\) subscheme of \(X\).  Moreover, if \(Y(F)\) is Zariski dense in \(Y\), then the set of fibers \(\{X_y : y \in Y(F)\}\) will be Zariski dense in \(X\). More precisely,~\eqref{eq:hilb_functor_of_points} gives an embedding \(Y\hookrightarrow \Hilb^d_X\), and \((\calU^d_X)_Y \to X\) dominates \(X\). 
    
        However, to determine whether \(Y(F)\) gives a Zariski dense set of \emph{degree~\(d\) points} (not only length~\(d\) subschemes) on \(X\), we must study the locus of \(Y\) where the 
        fibers of \(\pi\) are integral. In general, this is a difficult problem and will be a recurring theme throughout the paper.
        \end{remark}

        \subsubsection{The symmetric product}

        The \defi{\(d^{\text{th}}\) symmetric product} of \(X\) is the geometric quotient\footnote{This is not the stack quotient, since the action of \(S_d\) is not free.}
    	\[
    		\Sym^d_X := \underbrace{\left(X\times X\times \cdots \times X\right)}_d/S_d,
    	\]
    	where the symmetric group acts by permuting the \(d\) factors.  We write \(x_1 + \cdots + x_d\) for the image of the tuple \((x_1, \dots, x_d)\) in \(\Sym^d_X\). In general, \(\Sym^d_X\) need not be smooth, even if \(X\) is.  In fact, \(\Sym^d_X\) is singular whenever \(\dim X > 1\).

     In the case that \(X = \A^1 = \Spec F[t]\), the \(d^{th}\) symmetric product is the affine scheme given by the ring of invariants \(F[t_1, t_2, \dots, t_d]^{S_d}\) which is the ring generated by the \(d\) elementary symmetric functions. 
 Thus \(\Sym^d_{\A^1}\simeq \A^d\) and in particular is smooth.  Since smoothness can be checked \'etale locally, this implies that for any nice curve over a field \(F\), the symmetric product \(\Sym^d_C\) is a nice variety for all \(d\in \Z_{\geq1}\).  

        Given a degree~\(d\) point \(x\in X\), we may view \(x\) as a degree~\(1\) point on \(\Sym^d_X\). 
        Using the Galois action (since \(\Char(F)\) is assumed to be \(0\)), we may see this correspondence using the classical algebrogeometric description, where \(x\) is viewed as a Galois orbit \(O_x\subset  X(\Fbar)\) of cardinality \(d\).  By considering orderings of \(O_x\), we obtain \(d!\) points of \((X\times_F X\times_F\cdots \times_F X)(\Fbar)\) that are faithfully permuted by the symmetric group \(S_d\).  Thus, these orderings give a single point in the quotient \(\Sym^d_X\).  Since \(O_x\) was a Galois orbit, the image in \(\Sym^d_X\) is fixed by the absolute Galois group and thus must be a \(F\)-rational point of \(\Sym^d_X\).

        A more precise way to see this correspondence is via Lemma~\ref{lem:HilbRationalPt} and the so-called \defi{Hilbert--Chow} morphism
        \begin{align}
            \Hilb^d_X &\to \Sym^d_X \label{eq:HilbChow} \\
            x & \mapsto \sum_{z \in X} \mult_z(x) \cdot z \;. \nonumber
        \end{align}
        The open locus of \(\Sym^d_X\) consisting of \(d\) distinct geometric points is smooth (because it is a free quotient of a smooth variety) and the Hilbert--Chow morphism is a bijection on geometric points above this locus. Thus, a standard argument using Zariski's Main Theorem implies that the Hilbert--Chow morphism must be an isomorphism over this locus.\footnote{More precisely,  Zariski's Main Theorem~\cite{stacks-project}*{\href{https://stacks.math.columbia.edu/tag/05K0}{Lemma 05K0}} says that the Hilbert-Chow morphism must factor as an open immersion composed with a finite map. Since the morphism is also proper and a bijection above the locus \(U\) of distinct points, above \(U\)  the finite map must have degree \(1\) and the image of the composition is closed in \(U\). In other words, the Hilbert-Chow morphism is an isomorphism over \(U\).}

        If \(X = C\) is a nice curve, then all of \(\Sym^d_C\) is smooth (as explained above) and the Hilbert--Chow morphism is a bijection on geometric points over the entire space.\footnote{The inverse map on geometric points sends an unordered \(d\)-tuple of geometric points to the appropriate powers of the maximal ideals corresponding to the distinct points.} Thus, the same argument proves that Hilbert--Chow morphism gives an isomorphism \(\Hilb^d_C\xrightarrow{\sim}\Sym^d_C\) (and thus \(\Hilb^d_C\) is smooth). In what follows we will simply refer to this this variety as \(\Sym^d_C\) to match conventions in the literature.
 
        Even though a degree~\(d\) point on \(X\) gives rise to an \(F\)-point on \(\Sym^d_X\), the converse is not necessarily true (e.g., \(dx \in \Sym^d_X(F)\) if \(x \in X(F)\)).  We can characterize the degree~\(d\) points in \(\Sym^d_X(F)\) using morphisms of symmetric products.
        Let \(\bd = (d_1, \dots, d_r)\) be any partition of the positive integer \(d\) (we write \(\bd \vdash d\)).  Let \(\Sym^{\bd}_X\) denote the product \(\prod_{d_i \in \bd}\Sym^{d_i}_X\). There is a \defi{summation map}
        \[
        \sigma_{\bd} \colon \Sym^{\bd}_X\to \Sym^d_X
        \]
        sending the tuple \((y_1, \dots, y_r)\) with \(\deg(y_i) = d_i\) to the sum \(y_1 + \cdots + y_r\), which has degree~\(d\) by assumption.

\begin{lemma}\label{lem:image_summation_reducible}
     The degree~\(d\) points on a nice variety \(X/F\) correspond precisely to the complement of \(\bigcup_{\bd}\sigma_{\bd}\left(\Sym^{\bd}_X(F)\right)\) in \(\Sym^d_X(F)\) as \(\bd\) ranges over all nontrivial partitions of \(d\).
 \end{lemma}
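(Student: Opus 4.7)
The plan is to use the identification of $\Sym^d_X(F)$ with the set of Galois-stable effective zero-cycles of degree $d$ on $\Xbar$, which is already implicit in the preceding discussion (and was used above to note that $dx \in \Sym^d_X(F)$ for $x \in X(F)$). Under this identification, every $F$-point $P$ decomposes uniquely as
\[
P = \sum_{i=1}^{s} m_i [x_i],
\]
where the $x_i$ are distinct closed points of $X$, $m_i \in \Z_{\geq 1}$, and $\sum_i m_i \deg(x_i) = d$. The point $P$ comes from a degree $d$ closed point of $X$ if and only if $s=1$ and $m_1=1$ (so that $\deg(x_1)=d$).

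For the forward containment, I would assume $x$ is a degree $d$ closed point and suppose, for contradiction, that $[x] = \sigma_{\bd}(y_1, \dots, y_r)$ for some nontrivial partition $\bd = (d_1, \dots, d_r)$ of $d$ with $y_j \in \Sym^{d_j}_X(F)$. Writing each $y_j$ as a Galois-stable zero-cycle and invoking the uniqueness of the decomposition of $[x]=\sum_j y_j$, I would conclude that each $y_j$ is supported on the single closed point $\{x\}$, so $y_j = e_j[x]$ with $e_j\geq 1$ (since $d_j\geq 1$). The degree equation $d_j = e_j \cdot d$ then forces $d_j = d$ and $e_j = 1$, and $\sum_j d_j = d$ forces $r=1$, contradicting the nontriviality of $\bd$.

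For the reverse containment, given $P = \sum_{i=1}^{s} m_i [x_i] \in \Sym^d_X(F)$ that is not a degree $d$ point, I note that either $s\geq 2$ or $s=1$ with $m_1\geq 2$, so the multiset obtained by listing $\deg(x_i)$ with multiplicity $m_i$ is a partition $\bd$ of $d$ with $\sum_i m_i \geq 2$, hence nontrivial. Each $x_i$ is a degree $\deg(x_i)$ point and so gives an $F$-point of $\Sym^{\deg(x_i)}_X$; assembling these (with the appropriate multiplicities) yields a tuple in $\Sym^{\bd}_X(F)$ whose image under $\sigma_{\bd}$ is exactly $P$.

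The only substantive input is the uniqueness of the decomposition of a Galois-stable effective zero-cycle into a nonnegative integer combination of Galois orbits; this is unique factorization in the monoid of effective zero-cycles together with Galois-invariance of the orbit decomposition, and it is not a real obstacle. Care is really only needed in bookkeeping between multiplicities in the partition $\bd$ and multiplicities $m_i$ in the cycle representation.
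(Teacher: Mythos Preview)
Your proof is correct and takes essentially the same approach as the paper: both arguments rest on decomposing an $F$-point of $\Sym^d_X$ into its Galois orbits (equivalently, its support as a zero-cycle on closed points) and observing that this decomposition is trivial exactly when the point corresponds to a degree~$d$ closed point. The only cosmetic difference is that the paper phrases things in terms of geometric points and Galois orbits while you use the equivalent language of effective zero-cycles supported on closed points; in characteristic~$0$ these are interchangeable.
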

 \begin{proof}
     Let \(z\in \Sym^d_X(F)\). 
     Taking a geometric point in the fiber of  \(X\times \dots \times X \to \Sym^d_X\) over \(z\) and composing with each projection, we obtain \(d\) (not necessarily distinct) geometric points \(x_1, \dots, x_d\in X(\Fbar).\)  Since the absolute Galois group fixes the \(S_d\) orbit of these points, the points \(x_1, \dots, x_d\) can be partitioned into complete Galois orbits \(O_1, \dots, O_r\), where \(\bd := (\#O_i)_{i=1}^r\) gives a partition of \(d\). (Note that pairwise these orbits are either distinct or coincide.)
     Each orbit gives a point \(y_i\in \Sym^{d_i}(F)\) such that \(\sigma_{\bd}((y_i)) = z\). If \(z\) is not in the image of \(\sigma_{\bd'}\) for any nontrivial partition \(\bd'\), then the aforementioned partition must be trivial, i.e., \(\bd = (d)\). In other words, the \(x_i\) are pairwise distinct and the absolute Galois group acts transitively on them. This is equivalent to \(z\) corresponding to a degree~\(d\) point.
 \end{proof}
     \begin{remark}
         If the base field has characteristic \(p\), then this lemma can fail; see~\cite{Ma-CubicHypersurfaces}*{Example 4.4 and Proposition 4.5}.
     \end{remark}

\subsection{Residue fields and Galois groups}\label{subsec:GaloisTheory}
    Given a degree~\(d\) point \(x \in X\), we have its residue field \(\kk(x)\), which is a degree~\(d\) extension of \(F\), and its \defi{Galois group} (or rather the Galois group of the Galois closure) which is a transitive subgroup \(G\) of \(S_d\) well-defined up to conjugacy. From the classical algebrogeometric point of view, this is the image of the permutation representation of the absolute Galois group \(G_F\) on the orbit \(O_y\) of a geometric point \(y \in X(\Fbar)\). We will refer to \(G\) as the Galois group of the point \(x\). 
 
    It is also natural to consider the Galois group of a reduced \(0\)-dimensional subscheme of \(X\) that is not necessarily assumed to be integral.  This can again be described as the image of a permutation representation of \(G_F\) or as the Galois group of an \'etale algebra (see \cite{Poonen-RatlPoints}*{Section 1.3} for Galois theory in this context).  The difference is that in this generality, the group \(G\) is not necessarily a transitive subgroup of \(S_d\).  

        Given a closed point \(x\in X\) and an extension \(F'/F\), the base change \(x_{F'}\) can fail to remain a point by becoming reducible. Indeed, for any choice of embedding \(\kk(x)\hookrightarrow \overline{F'}\), the base change \(x_{F'}\) contains a point of degree equal to the degree of the compositum \(F'_x \colonequals F' \cdot \kk(x)\) over \(F'\), and this degree may be smaller than \([\kk(x): F]\). In other words, the base change \(x_{F'}\) is integral if and only if \(F'/F\) is linearly disjoint from \(\kk(x)/F\).

        If \(x_{F'}\) is not integral, then Galois theory shows that the decomposition type is constrained.
        \begin{prop}\label{prop:SplittingUnderExtns}
            Let \(F'/F\) be a finite extension and let \(F''\) denote its Galois closure. Let \(x\in X\) be a degree~\(d\) point.
            \begin{enumerate}
                \item All components of \(x_{F''}\) have the same degree~\(n\) and \(d\equiv 0 \bmod n\).\label{part:Galois}
                \item If \(x_{F'} \in \sigma_{\bd}\left(\Sym^{\bd}_X(F')\right)\) for some partition \(\bd\vdash d\), then \(\gcd(\bd)\equiv 0 \bmod n\).\label{part:GaloisGCD}
                \item If \(x_{F'} \in \sigma_{\bd}\left(\Sym^{\bd}_X(F')\right)\) for some partition \(\bd \vdash d\) with \(\gcd(\bd) = 1\), then \(\kk(x)\subset  F''\).\label{part:gcd1}
            \end{enumerate}
        \end{prop}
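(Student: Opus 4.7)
The plan is to trace everything back to the fact that the normal subgroup $G_{F''} \triangleleft G_F$ acts on the transitive $G_F$-set $O_x$ with orbits of equal size. Identifying closed points with Galois orbits (as in Section~\ref{sec:classical_def}), the components of $x_{F''}$ correspond to $G_{F''}$-orbits on $O_x$, with degree over $F''$ equal to the size of the orbit. Part~(\ref{part:Galois}) then follows: because $F''/F$ is Galois, $G_{F''} \triangleleft G_F$, and the standard fact that a normal subgroup of a transitive permutation group acts with orbits of a single common size $n$ gives $d = n \cdot (\text{number of orbits})$, so $n \mid d$.

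For part~(\ref{part:GaloisGCD}), I would first observe that every closed point $z$ appearing in $x_{F'}$ has $\deg_{F'}(z)$ divisible by $n$: the base change $z_{F''}$ is a disjoint union of some components of $x_{F''}$, each of degree $n$ by part~(\ref{part:Galois}), and degree is preserved under base change. Now unpack the hypothesis by writing $x_{F'} = y_1 + \cdots + y_r$ with $y_i \in \Sym^{d_i}_X(F')$. Since $\Char(F) = 0$, the extension $\kk(x)/F$ is separable and so $x_{F'}$ is reduced; this forces each $y_i$ to be reduced and the $y_i$ to have pairwise disjoint support. Consequently each $y_i$ is genuinely a sum of closed points of $X_{F'}$, so $d_i$ is a sum of integers each divisible by $n$, yielding $n \mid d_i$ for every $i$ and therefore $n \mid \gcd(\bd)$.

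Part~(\ref{part:gcd1}) is then a formal consequence: the hypothesis $\gcd(\bd)=1$ combined with part~(\ref{part:GaloisGCD}) forces $n = 1$, so every component of $x_{F''}$ is $F''$-rational. This means some embedding of $\kk(x)$ into $\Fbar$ lands inside $F''$, and because $F''/F$ is Galois, every such embedding does, giving $\kk(x) \subset F''$.

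I do not anticipate a genuinely hard step: the whole argument is the normal-subgroup orbit lemma plus careful bookkeeping. The one point that warrants care is ensuring that the summands $y_i$ in the decomposition $x_{F'} = y_1 + \cdots + y_r$ really are sums of closed points of $X_{F'}$ rather than opaque $F'$-points of $\Sym^{d_i}_X$, which is exactly where the reducedness of $x_{F'}$ is used.
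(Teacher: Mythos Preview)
Your proof is correct and follows essentially the same approach as the paper's. For part~(\ref{part:Galois}) the paper phrases it as \(\Gal(F''/F)\) acting (transitively) on the components of \(x_{F''}\), which is equivalent to your normal-subgroup orbit lemma. For part~(\ref{part:GaloisGCD}) the paper applies part~(\ref{part:Galois}) directly to each \(y_i\) viewed as a closed point of \(X_{F'}\) under the extension \(F''/F'\); your version is a bit more careful in explicitly decomposing each \(y_i\) into closed points of \(X_{F'}\) via reducedness before invoking part~(\ref{part:Galois}), and in noting that the components of \((y_i)_{F''}\) sit among those of \(x_{F''}\) so inherit the common degree \(n\). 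Part~(\ref{part:gcd1}) is identical.
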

        \begin{proof}
            Since \(F''/F\) is Galois, the Galois group \(\Gal(F''/F)\) acts on the components of \(x_{F''}\) and thus they must all have the same degree, which we denote \(n\). Since the degrees of the components of \(x_{F''}\) sum to \(\deg(x)\), \(d\) is divisible by \(n\), proving~\eqref{part:Galois}.  

            Assume that \(x_{F'} \in \sigma_{\bd}\left(\Sym^{\bd}_X(F')\right)\) and let \(y_i\in X_{F'}\) be such that \(\deg(y_i ) = d_i\) and \(\sum_i y_i = x_{F'}\). By applying part~\eqref{part:Galois} with \(x = y_i\) and the extension \(F''/F'\), we see that \(d_i\equiv 0 \bmod n\), and so \(\gcd(d_i) \equiv 0 \bmod n.\) This proves~\eqref{part:GaloisGCD}. Now assume that \(\gcd(d_i) = 1\).  Then \(n\) must equal \(1\) and so \(x_{F''}\) is a sum of \(F''\)-rational points. In other words, \(\kk(x) \subset  F''\), giving~\eqref{part:gcd1}.
        \end{proof}

\section{Additional tools for studying degree~\(d\) points in the case of curves}\label{sec:DegreedCurves}\label{sec:ToolsCurves}
 
    If \(X = C\) is a nice curve, then a degree~\(d\) point is also a degree~\(d\) effective divisor, and considering linear equivalences of divisors endows \(\Sym^d_C=\Hilb^d_C\) with additional structure.

    We write \(\Pic C\) for the Picard group of \(C\), and \(\Pic_C\) for the Picard scheme. Recall that \(\Pic_C(F) = (\Pic\Cbar)^{G_F}\) which can, in general, be larger than \(\Pic C\). For an integer \(d\), we write \(\Pic^d_C\) for the connected component consisting of equivalence classes of degree~\(d\) divisors.  Recall that \(\Pic^0_C\) is a \(g\)-dimensional abelian variety where \(g := \hh^1(C, \OO_C)\) is the \defi{genus} of the curve. The map sending effective divisors to their linear equivalence class induces the degree~\(d\) \defi{Abel--Jacobi morphism} \(\Sym^d_C \to \Pic^d_C\). This will be a fundamental tool in our study. 
    
    \subsection{Fibers of the Abel-Jacobi map and maps to projective space}

        To any divisor \(D\) on \(C\), we may associate its \defi{Riemann--Roch space}, i.e., the \(F\)-vector space
        \[\HH^0(C, D) \colonequals \{f \in \kk(C)^\times : \divv(f) + D \text{ is effective}\} \cup \{0\}.\]
        A divisor of the form \(\divv(f) + D\) for \(f \in \kk(C)^\times\) is \defi{linearly equivalent} to \(D\), and we write \([D]\) for the linear equivalence class of \(D\).
        Since \(\HH^0(C, D)\) depends only on the linear equivalence class of \(D\), we also write \(\HH^0(C, [D]) = \HH^0(C, D)\).  Let \(\hh^0([D]) = \hh^0(D) \colonequals \dim_F \HH^0(C, D)\). As the notation suggests, this vector space can be identified with cohomology, and as such the function \([D]\mapsto\hh^0([D])\) is upper semicontinuous on \(\Pic^d_C\)~\cite{Hartshorne}*{Chapter III, Theorem 12.8 and Chapter II, Section 7}.
        The \defi{complete linear system of \(D\)}, denoted \(|D|\), is the set of all effective divisors linearly equivalent to \(D\), and is isomorphic to a projective space of dimension \({\hh^0(D) - 1}\). By definition, the fiber of the Abel--Jacobi morphism over~\([D] \in \Pic(C)\) is the complete linear system \(|D|\). A nonempty linear system \(|D|\) is called \defi{basepoint free} if the intersection of the supports of all divisors in \(|D|\) is empty.
        See~\cite{Hartshorne}*{Section~IV.1}, or~\cite{Silverman-AEC}*{Section II.5} for more details.
        \begin{remark}
            The vector space \(\HH^0(C, D)\) and its dimension \(\hh^0(D)\) are sometimes denoted by \(\calL(D)\) and \(\ell(D)\) respectively. However, \(\calL(D)\) is sometimes instead used to denote the line bundle associated to \(D\). To avoid this confusion, we use \(\HH^0(C, D)\) and \(\hh^0(D)\) throughout.
        \end{remark}

        A basepoint free linear system \(|D|\) gives rise to a nondegenerate\footnote{A morphism \(C \to \PP^r\) is called nondegenerate if the image of \(C\) spans \(\PP^r\).} morphism \(C \to \PP^r\) such that the preimage of a hyperplane in \(\PP^r\) is an effective divisor linearly equivalent to \(D\)
        (see~\cite{Hartshorne}*{Section~II.7}). 
        The following lemma shows that a morphism to a projective space of dimension at least \(2\) can be composed with a projection to yield a morphism of the same degree to \(\PP^1\).
        
        \begin{lemma}\label{lem:projection}
            For \(r \geq 2\), let \(\varphi \colon C \to \PP^r\) be a nondegenerate morphism and let \(H \subset \PP^r\) be a hyperplane defined over \(F\). There exists a morphism \(\pi \colon C \to \PP^1\) and a point \(p \in \PP^1(F)\) such that \(\pi^{-1}(p) = \varphi^{-1}(H)\).
        \end{lemma}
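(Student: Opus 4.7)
The plan is to realize $\pi$ as the composition of $\varphi$ with a linear projection $\pi_\Lambda \colon \PP^r \dashrightarrow \PP^1$ from a suitably chosen linear subspace $\Lambda$ of codimension~$2$ in $\PP^r$. Recall that such a projection is a morphism on $\PP^r \setminus \Lambda$ whose scheme-theoretic fibers are exactly the hyperplanes of $\PP^r$ containing $\Lambda$, each with $\Lambda$ removed. If one can arrange that $\Lambda \subset H$ and that $\Lambda \cap \varphi(C) = \emptyset$, then $\pi \colonequals \pi_\Lambda \circ \varphi$ is a morphism $C \to \PP^1$, and the point $p \in \PP^1(F)$ corresponding to the hyperplane $H$ under the isomorphism (defined over $F$) between $\PP^1$ and the pencil of hyperplanes through $\Lambda$ satisfies
\[
\pi^{-1}(p) = \varphi^{-1}\bigl(\pi_\Lambda^{-1}(p)\bigr) = \varphi^{-1}(H \setminus \Lambda) = \varphi^{-1}(H),
\]
where the final equality uses that $\Lambda \cap \varphi(C) = \emptyset$.

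The step requiring content is thus to produce $\Lambda$. Since $\varphi$ is nondegenerate and $r \geq 1$, the image $\varphi(C)$ is a curve (rather than a point) and is not contained in $H$; consequently $\varphi(C) \cap H$ is a finite set $Z$ of closed points of $\PP^r$. The $(r-2)$-dimensional linear subspaces of $H$ defined over~$F$ are parameterized by the $F$-rational points of the Grassmannian $\operatorname{Gr}(r-1, H)$, which is a geometrically integral rational variety over~$F$. For each geometric point $z \in Z(\Fbar)$, the sub-Grassmannian of subspaces containing~$z$ is a proper closed subvariety, and the union of its $\Gal(\Fbar/F)$-translates is a proper closed $F$-subvariety of $\operatorname{Gr}(r-1, H)$. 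Because $F$ is infinite (being of characteristic zero), the $F$-rational points of $\operatorname{Gr}(r-1, H)$ are Zariski dense, so we may choose $\Lambda \in \operatorname{Gr}(r-1, H)(F)$ missing~$Z$; for this, one only needs $\dim \operatorname{Gr}(r-1,H) = r-1 \geq 1$, which holds since $r\geq 2$.

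With $\Lambda$ in hand, the projection $\pi_\Lambda \colon \PP^r \setminus \Lambda \to \PP^1$ is defined over $F$, the composition $\pi = \pi_\Lambda \circ \varphi$ is an $F$-morphism because $\varphi(C) \cap \Lambda = \emptyset$, and the hyperplane $H$ corresponds to a point $p \in \PP^1(F)$ by construction. The identity $\pi^{-1}(p) = \varphi^{-1}(H)$ displayed above then finishes the proof. The main (mild) obstacle is simply verifying that $\Lambda$ can be chosen over $F$ to avoid the finitely many points of $\varphi(C) \cap H$; this is where the hypothesis $r \geq 2$ enters, since for $r = 1$ no projection is available.
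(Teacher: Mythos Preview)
Your proof is correct and follows essentially the same approach as the paper: project from a codimension-\(2\) linear subspace \(\Lambda \subset H\) chosen, using that \(F\) is infinite, to miss the finite set \(\varphi(C)\cap H\). Your parameter space \(\operatorname{Gr}(r-1,H)\) is the same as the paper's \(H^\vee \simeq \PP^{r-1}\), and you are slightly more explicit about why \(F\)-points are Zariski dense there.
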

        \begin{proof}
            As \(\varphi\) is nondegenerate, the intersection \(\varphi(C) \cap H\) is \(0\)-dimensional.
            Therefore, inside the dual projective space \(H^\vee \simeq \PP^{r-1}\) of \((r-2)\)-planes contained in \(H\), the locus of \((r-2)\)-planes \(\Lambda\) such that \(\varphi^{-1}(\Lambda) \neq \emptyset\) has codimension \(1\).
            Hence, since \(F\) is infinite, there exists an \((r-2)\)-plane \(\Lambda \subsetneq H \subset \PP^r\) such that \(\varphi^{-1}(\Lambda) = \emptyset\).  Projection \(\pi_\Lambda\) from \(\Lambda\) defines a morphism \(\pi_\Lambda \colon \PP^r \smallsetminus \Lambda \to \PP^1\); intrinsically, the codomain \(\PP^1\) is the pencil of \((r-1)\)-planes in \(\PP^r\) containing \(\Lambda\).
            Composing \(\varphi\) with projection \(\pi_\Lambda\) from \(\Lambda\) gives a morphism \(\pi \colonequals\pi_\Lambda \circ \varphi \colon C \to \PP^1\).  Since \(H\) is an \((r-1)\)-plane containing \(\Lambda\), it corresponds to a point \(p \in \PP^1(F)\), and \(\pi^{-1}(p) = \varphi^{-1}(H)\).
        \end{proof}

        Said another way, Lemma~\ref{lem:projection} guarantees that if \(|D|\) is basepoint free, 
        then there are two disjoint effective divisors \(D', D''\in |D|\). We freely use this implication throughout.  Moreover, summation of divisors together with Lemma~\ref{lem:projection} implies that the set of degrees of morphisms from \(C\) onto \(\PP^1_F\) has the structure of an additive semigroup; this is called the \defi{L\"uroth semigroup} of \(C/F\)  \cite{luroth}*{Definition-Proposition 1}.
    	The \defi{gonality} of \(C/F\) is the minimal element of the L\"uroth semigroup, i.e.,
    	\[
    		\gon(C)  := \min\{\deg \phi : \phi\colon C \twoheadrightarrow \PP^1_F\}.	
    	\]
    	Note that for any extension \(F'/F\), we have \(\gon(C) \geq \gon(C_{F'})\).  See~\cite{Poonen-Gonality}*{Theorem 2.5} for further restrictions on the gonality under base extensions.

        \subsection{The image of the Abel--Jacobi map and its rational points}\label{sec:AJ}
        Let \(W_C^d\subset \Pic^d_C\) denote the image of the degree~\(d\) Abel--Jacobi map, i.e., \(W_C^d = \im (\Sym^d_C \to \Pic^d_C)\). If \(g \geq 1\), then by the Riemann--Roch Theorem~\cite{Hartshorne}*{Theorem 1.3}, the Abel--Jacobi map yields the isomorphisms \(W_C^1 \simeq C\) and for \(d\geq g\), \(W_C^d \simeq \Pic^d_C\).  In general, \(W_C^d\) is an irreducible subvariety of dimension \(\min(d,g)\).

        The \(F\)-rational points of \(\Sym^d_C\) map to \(F\)-rational points of \(W^d_C\), but this map is not necessarily surjective (even though \(\Sym^d_C\to W^d_C\) is a surjective map of schemes). In general, \(\im(\Sym^d_C(F) \to W^d_C(F)) = W^d_C(F) \cap \Pic(C) \subset W^d_C(F)\cap \Pic^d_C(F) = W^d_C(F)\). If we consider an arbitrary point \(w \in W_C^d(F)\), then the fiber of the Abel--Jacobi map over \(w\) is a priori only a Severi--Brauer variety of dimension \(\hh^0(w)-1\). However, since there are no nontrivial \(0\)-dimensional Severi--Brauer varieties, we have the following:
        \begin{lemma}\label{lem:BrSev}
            If \([D]\in \Pic_C^d(F)\) and \(\hh^0([D]) = 1\), then \([D]\in \im (\Sym_C^d(F) \to \Pic_C^d(F))\).
        \end{lemma}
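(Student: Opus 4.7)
The plan is to leverage directly the description of the fibers of the Abel--Jacobi map already given in the excerpt. Recall that the fiber of $\Sym^d_C \to \Pic^d_C$ over a rational point $[D] \in \Pic^d_C(F)$ is a priori a Severi--Brauer variety of dimension $h^0([D]) - 1$ over $F$ (it becomes the projective space $|D \otimes \Fbar| \simeq \PP^{h^0([D])-1}$ after base change to $\Fbar$). Under the hypothesis $h^0([D]) = 1$, this fiber is therefore a Severi--Brauer variety of dimension~$0$.

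The key observation is that there are no nontrivial $0$-dimensional Severi--Brauer varieties over $F$. Indeed, Severi--Brauer varieties of dimension $n-1$ are classified by central simple $F$-algebras of degree $n$ (equivalently, by Brauer classes of period dividing $n$); the only central simple $F$-algebra of degree $1$ is $F$ itself, so the only $0$-dimensional Severi--Brauer variety is $\Spec F$. In particular, it has an $F$-rational point.

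Chasing this through, this $F$-rational point of the fiber provides an element of $\Sym^d_C(F)$ whose image in $\Pic^d_C(F)$ is $[D]$. This establishes that $[D] \in \im(\Sym^d_C(F) \to \Pic^d_C(F))$, as desired.

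The only nontrivial ingredient is the identification of the fiber with a Severi--Brauer variety, together with the triviality statement for the $0$-dimensional case; both are standard and the latter is essentially tautological. I would therefore expect the proof to be extremely short, with no real technical obstacle beyond recording these two facts and citing (or briefly recalling) that the degree-one period/index constraint forces the Brauer class to be trivial.
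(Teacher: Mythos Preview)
Your argument is correct and is precisely the reasoning the paper uses to \emph{motivate} the lemma in the sentence immediately preceding it. However, the paper's actual proof takes a slightly more elementary route: rather than invoking the classification of Severi--Brauer varieties by central simple algebras, it argues directly via Galois descent. Since \(\hh^0([D])=1\), over \(\Fbar\) the linear system \(|D|\) consists of a single effective divisor \(E\); because \([D]\) is Galois-invariant, each \(\sigma(E)\) lies in \(|D|=\{E\}\), so \(E\) is itself Galois-fixed and hence lies in \(\Sym^d_C(F)\). Your approach has the virtue of explaining \emph{why} one should expect the result (it is the degenerate case of a general obstruction theory), while the paper's version is self-contained and avoids any appeal to the Brauer group or the theory of Severi--Brauer varieties. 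Both are perfectly valid; the paper's choice keeps the prerequisites minimal.
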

        \begin{proof}
            Since \(\hh^0([D]) = 1\), geometrically we have \(|D|=\{E\}\) for a unique effective divisor \(E\) defined over \(\bar{F}\). Since, further, \([D]\) gives a rational point of \(\Pic_C^d\), for every \(\sigma\in G_F\), we have that \(\sigma(E)\) is linearly equivalent to \(E\).  Since \(|E| = |D|=\{E\}\), we conclude that \(\sigma(E) = E\).  Thus \(E\) gives a point in \(\Sym_C^d(\Fbar)^{G_F} = \Sym_C^d(F)\).
        \end{proof}

        If \(\Pic^d_C\) has no rational points, then neither does \(W_C^d\) nor \(\Sym^d_C\).  However, if \(\Pic^d_C\) does have a rational point, then \(\Pic^d_C\simeq \Pic^0_C\) and so we may view \(W_C^d\) as a subvariety of an abelian variety.  If \(F = k\) is a number field, then we may apply the following landmark theorem of Faltings to obtain a structural description of the rational points of \(W_C^d\); this description will play a key role throughout the paper.
        \begin{thm}[Faltings's Subvarieties of AV Theorem~\cites{Faltings-SubvarietiesOfAbelianVarieties, Faltings-GeneralLang}]\label{thm:Faltings91}\hfill
        
            Let \(W \subset P\) be a closed subvariety of an abelian variety over a number field \(k\).
            Then 
            \[W(k) = \bigcup_{i=1}^N (w_i + A_i(k)),\]
            where \(w_i\in W(k)\) and  $A_i$ are abelian subvarieties \(A_i \subset P\)  such that \(w_i + A_i \subset W\).
        \end{thm}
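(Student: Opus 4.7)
The plan is to reduce the statement via Mordell--Weil to a question about intersections with a finitely generated subgroup, perform a geometric stratification by translates of abelian subvarieties (the Ueno/Kawamata decomposition), and then apply Vojta's height-based argument to the residual piece.

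First, by the Mordell--Weil theorem, \(P(k)\) is a finitely generated abelian group \(\Gamma\), so \(W(k) = W \cap \Gamma\). It therefore suffices to exhibit \(W \cap \Gamma\) as a finite union of sets of the form \(w_i + (A_i \cap \Gamma)\) with \(w_i + A_i \subset W\); the formulation in the theorem then follows since \(A_i \cap \Gamma \subset A_i(k)\). This reduction is essentially a packaging step, but it is what lets us measure points with the N\'eron--Tate height later.

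Second, I would carry out a geometric reduction via the \emph{Ueno locus}. Define \(B(W)\) to be the union of all translates \(w + A'\) that lie inside \(W\), where \(A' \subset P\) ranges over positive-dimensional abelian subvarieties. A structural result of Ueno, sharpened by Kawamata, says that \(B(W)\) is a Zariski-closed subset of \(W\) and decomposes into finitely many irreducible components, each itself a translate \(w_i + A_i\) of an abelian subvariety. These components account for exactly the cosets appearing in the conclusion, so it remains to prove that the complement \(W^\circ \colonequals W \setminus B(W)\) has only finitely many \(k\)-rational points.

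Third, for the finiteness statement on \(W^\circ\), I would apply Vojta's method. Using the N\'eron--Tate height \(\hat h\) attached to an ample symmetric line bundle on \(P\), the space \(P(k) \otimes_{\Z} \R\) acquires a Euclidean norm, into which \(W^\circ(k)\) embeds. The crucial input is the Vojta--Faltings \emph{height inequality}: there exist constants \(\epsilon, c > 0\) (depending on \(W\)) such that any two points \(p, q \in W^\circ(k)\) with \(\hat h(p), \hat h(q) \geq c\) obey a repulsion bound \(\langle p, q \rangle \leq (1-\epsilon)\, \|p\|\,\|q\|\). Covering the unit sphere in \(P(k) \otimes_{\Z} \R\) by finitely many spherical caps each of angular radius less than the acute angle whose cosine is \(1-\epsilon\) bounds the number of points of height above \(c\), while Northcott's theorem controls the finitely many points of height at most \(c\).

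The main obstacle is establishing the height inequality in the third step. Its proof rests on arithmetic intersection theory on the self-products \(P^n\), an extension of Mumford's gap principle beyond the curve case, and the construction of auxiliary global sections on \(W^{\circ\, n}\) with carefully tuned multivanishing conditions whose arithmetic degrees can be compared with the N\'eron--Tate heights of the points. The assumption that \(W^\circ\) contains no positive-dimensional translated abelian subvariety is exactly what guarantees the bigness of the auxiliary line bundles required in the construction, which is why the preliminary Ueno/Kawamata stratification in the second step is indispensable.
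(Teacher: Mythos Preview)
The paper does not prove this theorem; it is quoted as a black-box input from Faltings (building on Vojta) and then used throughout. So there is no in-paper argument to compare against, and I can only assess your outline on its own merits.

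Your second step contains a genuine gap. Kawamata's theorem guarantees that the Ueno locus \(B(W)\) is Zariski closed, but its irreducible components are \emph{not} in general translates of abelian subvarieties. A component of \(B(W)\) can, for instance, be a surface swept out by a one-parameter family of translates of a fixed elliptic curve without itself being a coset. So you cannot read off the cosets \(w_i + A_i\) as the components of \(B(W)\). After your third step handles \((W\smallsetminus B(W))(k)\) via the Vojta--Faltings height inequality (that part is correctly sketched), you still owe an argument for \(B(W)(k)\).

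The standard repair is Noetherian induction on \(\dim W\). If \(W\) is a translate of an abelian subvariety, there is nothing to prove. Otherwise one shows that \(B(W)\) is a \emph{proper} closed subset of \(W\): since \(P\) has only countably many abelian subvarieties, if \(B(W)=W\) then a single positive-dimensional abelian subvariety \(A'\) already has \(W+A'=W\), so \(A'\subset\operatorname{Stab}(W)^\circ\) and one first passes to the quotient \(P/\operatorname{Stab}(W)^\circ\). With \(B(W)\subsetneq W\), the height argument gives finiteness of \((W\smallsetminus B(W))(k)\), and the induction hypothesis applied to each irreducible component of \(B(W)\) expresses \(B(W)(k)\) as a finite union of sets \(w_i+A_i(k)\). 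Assembling the two pieces yields the claim.
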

        In the case of \(W_C^1\simeq C\), this recovers Faltings's earlier result proving the Mordell conjecture.
        \begin{thm}[Faltings's Curve Theorem~\cite{Faltings-Mordell}*{Satz 7}]\label{thm:Faltings83}
            Let \(C\) be a nice curve of genus at least~\(2\) defined over a number field \(k\).  Then \(C(k)\) is finite.
        \end{thm}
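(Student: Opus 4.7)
The plan is to deduce Theorem~\ref{thm:Faltings83} directly from Faltings's Subvarieties of AV Theorem (Theorem~\ref{thm:Faltings91}), following the indication in the text that the $d = 1$ case recovers Mordell's conjecture. The strategy is to embed $C$ into its Jacobian and then show that the abelian subvarieties appearing in the conclusion of Theorem~\ref{thm:Faltings91} must all be trivial.

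First, I would dispense with the trivial case: if $C(k) = \emptyset$, the conclusion is immediate. So assume there exists a rational point $p_0 \in C(k)$. This rational point gives an isomorphism $\Pic^1_C \xrightarrow{\sim} \Pic^0_C = \Jac(C)$ (via $[D] \mapsto [D - p_0]$) and, combined with the $d=1$ Abel--Jacobi morphism $C \simeq W^1_C \hookrightarrow \Pic^1_C$ from Section~\ref{sec:AJ}, yields a closed embedding $\iota_{p_0}\colon C \hookrightarrow \Jac(C)$ defined over $k$. In particular, $C$ is now realized as a closed subvariety of the abelian variety $P \colonequals \Jac(C)$ over $k$.

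Next, I would apply Theorem~\ref{thm:Faltings91} with $W = \iota_{p_0}(C)$ and $P = \Jac(C)$ to obtain a decomposition
\[
C(k) = \bigcup_{i=1}^N (w_i + A_i(k)),
\]
where each $w_i \in C(k)$ and each $A_i$ is an abelian subvariety of $\Jac(C)$ with $w_i + A_i \subset C$. The key remaining step is to show $\dim A_i = 0$ for every $i$. Since $C$ is a nice (in particular, irreducible) curve of dimension $1$, any closed subvariety $w_i + A_i$ of positive dimension would have to coincide with $C$; but then $C$ would be a translate of an abelian subvariety of its Jacobian, forcing $C$ itself to admit the structure of an abelian variety of dimension $1$, i.e., an elliptic curve. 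This contradicts the hypothesis $g \geq 2$. Hence each $A_i$ is trivial, and $C(k) = \{w_1, \ldots, w_N\}$ is finite.

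The main obstacle is, of course, Theorem~\ref{thm:Faltings91} itself, which is a deep result; however, since the paper takes it as an input, the only real content here is the elementary dimension/genus argument in the last step. The mild subtlety to watch is that a positive-dimensional translate of an abelian subvariety contained in $C$ genuinely equals $C$ (as opposed to merely containing a dense subset), which uses that $w_i + A_i$ is \emph{closed} in $\Jac(C)$ and hence in $C$, together with the irreducibility of $C$.
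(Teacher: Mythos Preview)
Your proposal is correct and follows exactly the approach the paper indicates in the sentence preceding the statement: applying Theorem~\ref{thm:Faltings91} to \(W_C^1 \simeq C\) and observing that any positive-dimensional abelian translate contained in \(C\) would force \(C\) to have genus~\(1\). The paper itself does not spell out a proof but simply cites Faltings's original 1983 paper; your write-up is a faithful expansion of the paper's one-line remark.
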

        
        \begin{remark}
            In the literature, Theorems~\ref{thm:Faltings91} and~\ref{thm:Faltings83} are often referred to as the Mordell--Lang conjecture (or Lang Conjecture) and the Mordell Conjecture, respectively.  Since we will refer to these results extensively in the paper and using the term `conjecture' to refer to results that are proved can be confusing to newcomers to the field, we have elected to refer to them as ``Faltings's Subvarieties of AV Theorem'' and ``Faltings's Curve Theorem'', even though this is not standard convention.  
        \end{remark}
        Over number fields, a large subset of \(W_C^d(k)\) must be contained in the image of \(\Sym^d_C(k)\).
        \begin{lemma}\label{lem:FiniteIndexEffective}
        Let \(C\) be a nice curve over a number field \(k\), let \(E\) be an effective degree~\(d\) divisor on \(C\), and let \(A\subset \Pic^0_C\) be a positive rank abelian variety such that \({[E]} + A\subset W_C^d\).  Then there is a finite index subgroup \(B < A(k)\) such that \({[E]} + B \subset \im (\Sym^d_C(k) \to W_C^d(k))\).
    \end{lemma}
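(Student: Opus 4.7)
The plan is to identify the obstruction to lifting a \(k\)-rational point of \(W^d_C\) to \(\Sym^d_C(k)\) as a class in \(\Br(k)\), and then exploit that \(A(k)\) is finitely generated (Mordell--Weil) together with the torsion-ness of \(\Br(k)\).

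First, I would establish that
\[
\im\!\bigl(\Sym^d_C(k) \to W^d_C(k)\bigr) \;=\; W^d_C(k) \cap \Pic C,
\]
where we view \(\Pic C \hookrightarrow \Pic_C(k)\). The inclusion \(\subseteq\) is clear. Conversely, suppose \([D]\in W^d_C(k)\cap \Pic C\) is represented by a (not necessarily effective) \(k\)-rational divisor \(D_0\). Because \(h^0\) is preserved under base extension (flat base change for coherent cohomology), the fact that \([D_0]\in W^d_C\) forces \(\hh^0(D_0) \geq 1\); choosing any nonzero \(f \in \HH^0(C, D_0)\) yields the effective \(k\)-rational representative \(D_0 + \divv(f)\).

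Next, the low-degree terms of the Leray spectral sequence for \(C \to \Spec k\), combined with Hilbert 90, give the exact sequence
\[
0 \to \Pic C \to \Pic_C(k) \to \Br(k).
\]
Since \(E\) is a \(k\)-rational effective divisor, \([E] \in \Pic C\), so translating by \([E]\) is trivial modulo \(\Pic C\). Thus the composition
\[
\phi \colon A(k) \hookrightarrow \Pic^0_C(k) \to \Pic_C(k)/\Pic C \hookrightarrow \Br(k)
\]
is a group homomorphism. By Mordell--Weil \(A(k)\) is finitely generated, and \(\Br(k)\) is a torsion abelian group for any field; hence \(\im(\phi)\) is finitely generated and torsion, therefore finite. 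Set \(B \colonequals \ker(\phi)\), which is a finite index subgroup of \(A(k)\).

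Finally, for each \(b \in B\), there exists a \(k\)-rational divisor \(D_b\) of degree~\(0\) with \([D_b] = b\). Then \(E + D_b\) is a \(k\)-rational divisor of degree \(d\) whose class \([E] + b\) lies in \(W^d_C(k)\) by hypothesis. Applying the first step to \([E+D_b]\) produces an effective \(k\)-rational representative, so \([E] + b \in \im(\Sym^d_C(k) \to W^d_C(k))\), as required. The only conceptual step is recognizing that the obstruction to effective \(k\)-rational representability lives in \(\Br(k)\); given this, everything else is immediate from Mordell--Weil and the torsion-ness of \(\Br(k)\), so I do not expect any serious obstacle.
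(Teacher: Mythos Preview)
Your proof is correct and follows essentially the same approach as the paper: identify \(\im(\Sym^d_C(k) \to W^d_C(k))\) with \(W^d_C(k)\cap \Pic C\), use the Hochschild--Serre/Leray exact sequence to realize the obstruction as a homomorphism \(A(k)\to \Br k\), and conclude via Mordell--Weil and the fact that \(\Br k\) is torsion. You even supply slightly more detail than the paper on why a \(k\)-rational divisor class in \(W^d_C\) has an effective \(k\)-rational representative.
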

    \begin{proof}
        Since \({[E]} + A(k)\subset W_C^d\), every divisor \emph{class} in \({[E]} + A(k)\) is represented by a \(\kbar\)-effective divisor.  The divisor classes of \({[E]} + A(k)\) that are contained in the image of \(\Sym^d_C(k) \to W_C^d(k)\) are exactly the divisor classes that have \(k\)-rational representatives.  In other words,
        \begin{align*}
            ({[E]} + A(k))\cap \im (\Sym^d_C(k) \to W_C^d(k)) & = ({[E]} + A(k))\cap \im (\Pic(C) \to \Pic_C(k))\\
            & = {[E]} + \left(A(k)\cap \im (\Pic(C) \to \Pic_C(k)\right).
        \end{align*}
        The exact sequence of low degree terms from the Hochschild-Serre spectral sequence (see~\cite{Poonen-RatlPoints}*{Section 6.7, Corollary 6.7.8}) shows that 
        \[
            A(k)\cap \im (\Pic(C) \to \Pic_C(k)) = A(k)\cap \ker (\Pic_C(k)\to \Br k) = \ker (A(k)\to \Br k).
        \]
        Since \(A(k)\) is finitely generated and \(\Br k\) is torsion, we obtain the desired result.
    \end{proof}
        
    
    \subsubsection{Rational points of \(W^d_C\) that correspond to degree~\(d\) points}
        Descending the summation map from Lemma~\ref{lem:image_summation_reducible} to a map on \(\Pic_C^d\) gives a sufficient condition for an effective divisor class to be represented by a degree~\(d\) point. Writing \(W_C^{\bd} \subset \Pic^{\bd}_C\) for the product \(W_C^{d_1} \times \cdots \times W_C^{d_r} \subset \Pic^{d_1}_C \times \cdots \Pic^{d_r}_C\), the summation map \(\sigma_{\bd}\) fits into a commutative diagram map with the group law\footnote{Recall that the group law on \(\Pic_C\) corresponds to tensor products of line bundles, which agrees with the sum of divisors/divisor classes.}
 \(+\) on \(\Pic_C\):

        \begin{center}
            \begin{tikzcd}
                \Sym^{\bd}_C \arrow[d, "\sigma_{\bd}"] \arrow[r, "\prod \text{Abel--Jacobi}"] &[5em] W_C^{\bd} \arrow[r, hook] \arrow[d] & \Pic^{\bd}_C \arrow[d, "+"] \\
                \Sym^d_C \arrow[r, "\text{Abel--Jacobi}"]& W_C^d \arrow[r, hook] & \Pic^d_C
            \end{tikzcd}
        \end{center}
        \begin{cor}[Corollary of Lemma~\ref{lem:image_summation_reducible}]\label{cor:image_summation_reducible}
             Let \([D]\in \im (\Sym^d_C(F) \to W_C^d(F))\). If \([D]\notin\im\left(+ \colon W_C^{\bd}\to W_C^d\right)\) for any nontrivial \(\bd\vdash d\), then \([D]\) is represented by a degree~\(d\) point.
         \end{cor}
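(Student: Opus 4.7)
The plan is to argue by contradiction, translating the partition decomposition of Lemma~\ref{lem:image_summation_reducible} from $\Sym^d_C$ up to $\Pic^d_C$ via the commutative square displayed just before the corollary. By hypothesis $[D]$ lies in the image of $\Sym^d_C(F) \to W_C^d(F)$, so I would fix once and for all an effective divisor $D' \in \Sym^d_C(F)$ representing $[D]$; the goal then is to show that this particular $D'$ is a degree~$d$ point.

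Assume toward contradiction that $D'$ is not a degree~$d$ point. Lemma~\ref{lem:image_summation_reducible} (applied to the nice variety $X = C$) produces a nontrivial partition $\bd = (d_1, \ldots, d_r) \vdash d$ together with an $F$-point $(y_1, \ldots, y_r) \in \Sym^{\bd}_C(F)$ satisfying $\sigma_{\bd}(y_1, \ldots, y_r) = D'$. Each $y_i$ is an effective divisor of degree $d_i$, so its Abel--Jacobi image $[y_i]$ lies in $W_C^{d_i}(F)$, and the tuple $([y_1], \ldots, [y_r])$ is an $F$-point of $W_C^{\bd}$.

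To finish, I would chase the element $(y_1,\ldots,y_r)$ through the commutative diagram preceding the corollary statement: going down-then-right sends it to $[D'] = [D]$, while going right-then-down sends it to $[y_1] + \cdots + [y_r]$ computed via the group law on $\Pic_C$. Hence $[D]$ lies in $\im\bigl(+\colon W_C^{\bd} \to W_C^{d}\bigr)$ for the nontrivial partition $\bd$, contradicting the hypothesis. Therefore $D'$ is a degree~$d$ point, and it represents $[D]$ by construction. The proof has essentially no obstacle beyond bookkeeping: the substantive content is entirely packaged inside Lemma~\ref{lem:image_summation_reducible}, and the only thing to verify is that the summation of divisors on $C$ is compatible with the group law on $\Pic_C$, which is built into the diagram.
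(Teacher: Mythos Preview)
Your proof is correct and is exactly the argument the paper intends: the corollary is stated without an explicit proof in the paper because it follows immediately from Lemma~\ref{lem:image_summation_reducible} and the commutative diagram, precisely as you have written out.
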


\subsection{Integrality of finite fibers and Hilbert's Irreducibility Theorem}\label{sec:HilbIrred}

    In our last background section, we return to the question raised in Remark~\ref{rmk:HilbFunctor} specialized to curves over number fields: given a degree~\(d\) morphism \(\pi\colon C\to Z\), where \(Z\) is a curve with a Zariski dense set of \(k\)-points, when is the set
    \(
    \{z\in Z(k) : C_z \text{ integral}\}
    \)
    dense in \(Z\)?
    If \(Z = \PP^1\), then Hilbert's Irreducibility Theorem can be used to prove that the answer is always. 
    \begin{prop}[Specialization of Proposition~\ref{prop:hilbert}]\label{prop:hilbert-curve}
        Let \(C\) be a nice curve over a number field \(k\), and let \(\pi \colon C \to \PP_k^1\) be a degree~\(d\) morphism. Then there exists a Zariski dense subset of points \(z \in \PP^1(k)\) such that \(C_z\) is a degree~\(d\) point on \(C\).  
        In particular, \(d \in \dendegs(C/k)\).
    \end{prop}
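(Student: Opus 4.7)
The plan is to leverage the equivalence between the degree~\(d\) morphism \(\pi\colon C \to \PP^1\) and the corresponding degree~\(d\) field extension of function fields, then to invoke Hilbert's Irreducibility Theorem (in the form of the more general Proposition~\ref{prop:hilbert}) to produce many \(k\)-rational values of the parameter for which the fiber of \(\pi\) remains integral.

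First, I would pass to the algebraic side. Since \(C\) is nice and hence geometrically integral, the pullback \(\pi^*\colon k(t) = k(\PP^1) \hookrightarrow k(C)\) is a field extension of degree~\(d\). As we are in characteristic zero, the primitive element theorem gives some \(\alpha \in k(C)\) with \(k(C) = k(t)(\alpha)\); after clearing denominators from its minimal polynomial over \(k(t)\), we obtain an irreducible polynomial \(f(t,x) \in k[t,x]\) of degree~\(d\) in \(x\) cutting out \(C\) generically over \(\PP^1\).

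Next I would invoke Hilbert's Irreducibility Theorem (Proposition~\ref{prop:hilbert}): there exists a Hilbertian, and in particular Zariski dense, subset \(H\subset \A^1(k)\) such that for every \(t_0 \in H\) the specialized polynomial \(f(t_0, x) \in k[x]\) is irreducible of degree~\(d\). By removing from \(H\) the finitely many values lying under the branch locus of \(\pi\), under the poles of the coefficients of \(f\), or at \(\infty \in \PP^1\), I may further arrange that for every \(t_0 \in H\) the scheme-theoretic fiber \(\pi^{-1}(t_0)\) is étale of degree~\(d\) over \(k\) and equals \(\Spec\bigl(k[x]/f(t_0,x)\bigr)\). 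The irreducibility of \(f(t_0,x)\) then forces this fiber to be a single closed point of \(C\) of degree~\(d\), with residue field \(k[x]/f(t_0,x)\).

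Finally, to conclude that the resulting degree~\(d\) points are Zariski dense in \(C\) and hence that \(d \in \dendegs(C/k)\): the Hilbertian set \(H\) is Zariski dense in \(\PP^1_k\), and since \(\pi\) is a finite surjective morphism of curves, the preimage \(\pi^{-1}(H)\) is Zariski dense in \(C\) and consists entirely of degree~\(d\) closed points by the previous step. The substantive work rests entirely in Proposition~\ref{prop:hilbert}; the proof here is essentially a faithful translation between the geometric statement about integrality of fibers and the algebraic statement about irreducibility of specialized polynomials, so the only real obstacle is setting up that dictionary carefully (and in particular, certifying that away from a controlled finite set the fiber really is cut out by \(f(t_0,x)\)).
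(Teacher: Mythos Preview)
Your proposal is correct. Since the proposition is labeled as a specialization of Proposition~\ref{prop:hilbert}, the paper's ``proof'' is simply to apply that result with \(n=1\); your invocation of Proposition~\ref{prop:hilbert} already accomplishes this, so the surrounding polynomial translation (primitive element, specializing \(f(t_0,x)\)) is correct but redundant once Proposition~\ref{prop:hilbert} is granted.

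That said, your polynomial route is the \emph{classical} formulation of Hilbert's Irreducibility Theorem and differs from how the paper actually proves Proposition~\ref{prop:hilbert} in Appendix~\ref{app:HilbertIrred}. There, the argument passes to the Galois closure \(Y \to C \to \PP^1\) with group \(G\) (transitive in \(S_d\) since \(C\) is irreducible), and uses the Specialization of Galois Groups lemma (Lemma~\ref{lem:Gal_specialization}) to show that outside a thin set \(\Omega \subset \PP^1(k)\) the fiber \(Y_t\) is a single closed point with Galois group \(G\), forcing \(C_t\) to be a single degree-\(d\) point. Your approach avoids the Galois closure entirely and works with a single generating polynomial, which is more elementary; the paper's approach has the advantage of identifying the exceptional locus explicitly as a thin set and controlling the full Galois group of the fiber (not just its irreducibility), which is the framework needed for the extension to elliptic bases in Proposition~\ref{prop:Hilbert_E}.
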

    \begin{remark}
        Giving the full proof of Proposition~\ref{prop:hilbert-curve} would take us far afield from the topics discussed in this paper. At the same time, many (but not all) of the ingredients of the proof are similar to ideas we have already discussed thus far. For this reason, we provide  Appendix~\ref{app:HilbertIrred} where we sketch of the main ingredients of the proof  of Proposition~\ref{prop:hilbert-curve} and give the proof of Proposition~\ref{prop:Hilbert_E} below.
    \end{remark}

    If \(Z\) is not rational, then \emph{all} fibers \(C_z\) can fail to be integral.
For example, there exist elliptic curves \(E\) of positive rank and nontrivial isogenies \(\pi \colon E' \to E\), for which \(E(k) = \pi(E'(k))\); for example: the \(3\)-isogeny from \href{https://www.lmfdb.org/EllipticCurve/Q/175/b/3}{175.b3} to \href{https://www.lmfdb.org/EllipticCurve/Q/175/b/1}{175.b1} has this property. 
%
In such examples, the locus of points in \(E(k)\) for which the fiber is integral is empty.  In the same way, if \(C \to E\) is any cover that factors \(C \to E' \to E\) through a nontrivial isogeny \(E'\to E\) as above, then the conclusion of Proposition~\ref{prop:hilbert} fails for this cover.\footnote{While the conclusion fails over \(k\), Tucker proves there exists a finite extension \(k'/k\) over which the conclusion holds~\cite{Tucker}*{Theorem 2.5}.}  On the other hand, ideas from~\cite{cdjlz} imply that factoring through a nontrivial connected \'etale cover (such as an isogeny) is the \emph{only} way that Proposition~\ref{prop:hilbert} can fail for a cover of a positive rank elliptic curve.

\begin{prop}[c.f.~\cite{cdjlz}*{Theorem 1.4}]\label{prop:Hilbert_E}
Let \(C\) be a nice curve defined over a number field \(k\) and let \(E\) be an elliptic curve of positive rank.  If \(\pi \colon C \to E\) is a degree~\(d\) cover that does not factor through any nontrivial connected \'etale subcover \(C' \to E\), then there exists a Zariski dense locus of points \(t \in E(k)\) for which \(\pi^{-1}(t)\) is a degree~\(d\) point on \(C\).
\end{prop}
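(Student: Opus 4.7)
The plan is to work with the Galois closure of $\pi$, reinterpret integrality of fibers via monodromy, and use a trichotomy on the genera of intermediate quotient covers to reduce to an \'etale case which is ruled out by the non-factoring hypothesis.

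Let $\tilde\pi\colon \tilde C \to E$ denote the Galois closure of $\pi$, with Galois group $G$, and let $H$ be the subgroup of $G$ with $\tilde C/H = C$, so $[G:H] = d$. For $t \in E(k)$ outside the branch locus of $\tilde\pi$, the fiber $\pi^{-1}(t)$ is an integral degree-$d$ closed point if and only if the image $\rho_t(G_k)$ of the monodromy representation at $t$ acts transitively on $G/H$, equivalently $\rho_t(G_k)\cdot H = G$. Hence the set of $t$ for which integrality fails is contained in the finite union
\[
\bigcup_K \pi_K\!\bigl((\tilde C/K)(k)\bigr),
\]
where $K$ ranges over proper subgroups of $G$ with $KH \neq G$ and $\pi_K\colon \tilde C/K \to E$ is the natural intermediate cover of degree $[G:K]$. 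It therefore suffices to show that for each such $K$ this image is not Zariski dense in $E$.

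For a fixed $K$, I would consider the three possibilities for the genus of $\tilde C/K$. If the genus is $0$, then there is no nonconstant map to $E$, so $\pi_K$ is constant and its image is a single point. If the genus is at least $2$, then $(\tilde C/K)(k)$ is finite by Faltings's Curve Theorem (Theorem~\ref{thm:Faltings83}), so the image is again finite. In the remaining case $\tilde C/K$ has genus $1$, and Riemann--Hurwitz forces any nonconstant $\pi_K$ to be \'etale; then (after choosing an appropriate origin) $\tilde C/K$ is a smooth genus-$1$ curve \'etale-isogenous to $E$, and its image of $k$-points is Zariski dense in $E$ whenever $E$ has positive rank, since isogenous elliptic curves have equal rank.

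I would rule out this last case using the non-factoring hypothesis. Let $R \trianglelefteq G$ denote the normal closure of the inertia subgroups of $\tilde\pi$. Then an intermediate cover $\tilde C/L \to E$ is \'etale exactly when $R \leq L$, so the hypothesis that $\pi$ does not factor through any nontrivial connected \'etale subcover of $E$ translates into the statement that no subgroup $L$ with $H \leq L \lneq G$ contains $R$; since $R$ is normal in $G$, this is equivalent to $HR = G$. The \'etaleness of $\pi_K$ above gives $R \leq K$, so $G = HR \subseteq HK$ and hence $HK = G$; but $HK = G$ is equivalent to $K$ acting transitively on $G/H$, contradicting the choice of $K$. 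The main obstacle is making the monodromy interpretation in the second paragraph rigorous over the base field $k$---keeping careful track of arithmetic versus geometric Galois actions, and possibly passing to a finite extension of $k$ over which $\tilde C$ is geometrically connected (while preserving the positive rank of $E$) so that the genus trichotomy and the \'etaleness calculation apply directly to the quotients $\tilde C/K$---after which the remainder reduces to the clean group-theoretic deduction above.
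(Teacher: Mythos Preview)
Your argument is correct, and in fact your worries at the end are overstated: the group-theoretic reduction goes through over~\(k\) without passing to an extension. The quotient \(\tilde C/K\) is always connected as a \(k\)-scheme (its function field is a field), and the trichotomy should be read as the genus of \(\tilde C/K\) over its own field of constants. Over \(\kbar\) the map \(\tilde C/K \to E\) is a nonconstant map from each geometric component, so genus~\(0\) is impossible; genus~\(\geq 2\) gives finiteness of \((\tilde C/K)(k)\) by Faltings over the field of constants; and genus~\(1\) forces \(\tilde C/K \to E\) to be \'etale by Riemann--Hurwitz applied componentwise. Your inertia argument \(R \leq K \Rightarrow HK \supseteq HR = G\) then goes through verbatim, since ``\'etale'' is insensitive to the constant-field issue and the non-factoring hypothesis translates exactly to \(HR = G\) (using that the subgroup generated by inertia is automatically normal, so \(HR\) is a group).

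Your route is genuinely different from the paper's, though the two share the same skeleton. Both take the Galois closure \(\tilde C \to E\), cover the non-integral locus by images \(\pi_K((\tilde C/K)(k))\), and dispose of the higher-genus quotients via Faltings's Curve Theorem. The divergence is in the \'etale (genus~\(1\)) quotients. The paper handles these by first pulling back along the maximal \'etale subcover \(E' \to E\) with a \(k\)-point, so that by construction any remaining \'etale intermediate quotient has \emph{no} \(k\)-points; this requires the auxiliary Lemma~\ref{lem:not_factoring_equivalent} (non-factoring \(\Leftrightarrow\) connected fiber products) to check that \(\varphi^*C\) stays connected. You instead observe directly that any \'etale quotient \(\tilde C/K\) has \(R \leq K\), whence \(KH = G\), so such \(K\) never appear in the bad union to begin with. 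Your approach is more self-contained---it bypasses the pullback and the connectedness lemma entirely---at the cost of requiring the reader to be comfortable with the decomposition-group formalism for torsors. The paper's approach has the advantage of isolating a clean special case (Lemma~\ref{lem:Hilbert_E_special}) that makes the role of Faltings transparent.
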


\section{Parameterizing degree~\(d\) points on curves}\label{sec:IntroToParam}

	\subsection{\(\PP^1\)-parameterized points}
	As we saw in Section~\ref{sec:ConstructionHyperellipticCurves} (see Definition~\ref{def:P1param}), we say that a closed point \(x\in C\) is \(\PP^1\)-parameterized if there exists a morphism \(\pi\colon C \to \PP^1\) with \(\deg(\pi) = \deg(x)\) and \(\pi(x) \in \PP^1(k)\).		Intuitively, \(x\) is \(\PP^1\)-parameterized if there is a rational parameterization of a subset of degree~\(d\) points (more precisely, a subvariety of \(\Sym^d_C\)) that  contains \(x\).  Unraveling this intuition gives several equivalent definitions:
    \begin{lemma}\label{lem:P1paramDefineorem}
        Let \(C\) be a nice curve over a number field \(k\) and let \(x\in C\) be a closed point of degree~\(d\).  The following are equivalent:
        \begin{enumerate}
            \item The point \(x\) is \(\PP^1\)-parameterized.\label{cond:P1param}
            \item There exists a nonconstant map \(\PP^1 \to \Sym^d C\) whose image contains \(x\in (\Sym^dC)(k)\).\label{cond:P1inSymd}
            \item The dimension \(\hh^0([x])\) is at least \(2\), where \([x]\) denotes the divisor class of \(x\).\label{cond:linearsystem}
            \item The linear system \(|x|\) is basepoint free.\label{cond:bpf}
        \end{enumerate}
    \end{lemma}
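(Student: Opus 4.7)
The plan is to establish the cyclic chain of implications $(1) \Rightarrow (2) \Rightarrow (3) \Rightarrow (4) \Rightarrow (1)$, exploiting in turn the moduli-theoretic interpretation of $\Sym^d_C = \Hilb^d_C$, the structure of the Abel--Jacobi map, the integrality of $x$, and the dictionary between basepoint-free pencils and maps to $\PP^1$.

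For $(1) \Rightarrow (2)$, the degree-$d$ morphism $\pi\colon C \to \PP^1$ is finite flat, so its fibers define a family of length-$d$ subschemes of $C$ parameterized by $\PP^1$; by the Hilbert-scheme functor of points~\eqref{eq:hilb_functor_of_points} this is precisely a nonconstant morphism $\varphi\colon \PP^1 \to \Sym^d_C$. The scheme-theoretic fiber $\pi^{-1}(\pi(x))$ has length $d$ and contains $x$, so it equals $x$ and thus $x$ lies in the image of $\varphi$. For $(2) \Rightarrow (3)$, I compose $\varphi$ with the Abel--Jacobi map to obtain a morphism $\PP^1 \to \Pic^d_C$. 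Since $\Pic^d_C$ is a torsor under an abelian variety, it admits no nonconstant morphism from $\PP^1$; hence $\varphi(\PP^1)$ is contained in the Abel--Jacobi fiber over $[x]$, i.e.\ in the linear system $|x|$, which is a projective space of dimension $\hh^0([x]) - 1$ over $k$ (being a Severi--Brauer variety with the rational point $x$, cf.\ Lemma~\ref{lem:BrSev}). Nonconstancy of $\varphi$ then forces $\hh^0([x]) \geq 2$.

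The substantive step is $(3) \Rightarrow (4)$, where the key input is that $x$ is a \emph{single} Galois orbit. Given a non-scalar $f \in \HH^0(C, [x])$ defined over $k$, the $k$-rational divisor $D \colonequals \divv(f) + x \in |x|$ is distinct from $x$; were $D$ to share any geometric point with $x$, Galois invariance of $D$ would force $D \geq x$, and then a degree count would give $D = x$, a contradiction. Hence $\supp(D) \cap \supp(x) = \emptyset$, and the base locus of $|x|$ (contained in this intersection) is empty. For $(4) \Rightarrow (1)$, basepoint freeness rules out $\hh^0([x]) = 1$ (otherwise $|x| = \{x\}$ would have base locus $\supp(x) \neq \emptyset$), so I may produce $f$ and $D$ as in the previous step; viewing $1$ and $f$ as sections of $\OO(x)$, the basepoint-free pencil $\langle 1, f\rangle$ yields a morphism $\pi\colon C \to \PP^1$ of degree $\deg(x) = d$ satisfying $\pi^{-1}([0:1]) = x$, whence $\pi(x) \in \PP^1(k)$.

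The main obstacle is the integrality argument inside $(3)\Rightarrow(4)$: the statement genuinely uses that $x$ is irreducible (a single Galois orbit), and the analogous equivalence would fail for a reducible effective divisor of degree $d$. Everything else reduces to the standard dictionary between linear systems, line bundles, and functors of points on Hilbert schemes reviewed in Sections~\ref{sec:HilbSym} and~\ref{sec:AJ}.
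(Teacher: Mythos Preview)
Your proof is correct and follows essentially the same approach as the paper's: both exploit the functor of points of \(\Sym^d_C\) for \((1)\Rightarrow(2)\), the absence of rational curves in abelian varieties for \((2)\Rightarrow(3)\), and the irreducibility of \(x\) as a \(k\)-divisor to upgrade \(\hh^0\geq 2\) to basepoint freeness and a degree-\(d\) map. The only cosmetic difference is the cycle you choose: you run \((1)\Rightarrow(2)\Rightarrow(3)\Rightarrow(4)\Rightarrow(1)\), while the paper proves \((3)\Rightarrow(1)\) and \((3)\Rightarrow(4)\) in one stroke (the nonconstant function with poles in \(x\) must have polar divisor exactly \(x\), giving both the map to \(\PP^1\) and basepoint freeness) and closes with \((4)\Rightarrow(3)\).
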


    Condition~\eqref{cond:linearsystem} is intrinsically defined and gives a method of testing whether a given point is \(\PP^1\)-parameterized or \(\PP^1\)-isolated. (In algebraic geometry terminology, condition~\eqref{cond:linearsystem} can be equivalently phrased that the divisor \(x\) \defi{moves}.) Upper semicontinuity of \(\hh^0\) means that if there is a single \(\PP^1\)-isolated point of degree \(d\) then there is an open dense subset of \(W^d\) over which \(\hh^0(D) = 1\). On the other hand, the Riemann--Roch Theorem and condition~\eqref{cond:linearsystem} imply that all points of sufficiently large degree are \(\PP^1\)-parameterized. 
    \begin{cor}\label{cor:HighDegreeP1Param}
       Let \(C\) be a nice curve over a number field \(k\) and let \(x\in C\) be a closed point of degree~\(d\). If \(d\geq g+1\), then \(x\) is \(\PP^1\)-parameterized.
    \end{cor}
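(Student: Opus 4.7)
The plan is to apply the Riemann--Roch Theorem to the degree~\(d\) divisor \(x\) and combine it with the characterization in Lemma~\ref{lem:P1paramDefineorem}\eqref{cond:linearsystem}, which says that \(x\) is \(\PP^1\)-parameterized if and only if \(\hh^0([x]) \geq 2\). So it suffices to produce this lower bound on \(\hh^0([x])\) under the assumption \(d \geq g+1\).

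Recall that the Riemann--Roch Theorem on \(C\) states that for any divisor \(D\) of degree~\(d\),
\[
\hh^0(D) - \hh^0(K - D) = d - g + 1,
\]
where \(K\) is a canonical divisor of degree \(2g-2\). Since \(\hh^0(K - D) \geq 0\) always, this immediately gives the inequality \(\hh^0(D) \geq d - g + 1\). Applying this with \(D = x\) and using \(d \geq g+1\) yields \(\hh^0([x]) \geq 2\), which by Lemma~\ref{lem:P1paramDefineorem} is exactly the condition that \(x\) be \(\PP^1\)-parameterized.

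There is no genuine obstacle here; the argument is essentially a one-line consequence of Riemann--Roch once the equivalence of \(\PP^1\)-parameterization with the linear-system condition has been established in Lemma~\ref{lem:P1paramDefineorem}. The only mild subtlety worth flagging is that condition~\eqref{cond:linearsystem} is stated in terms of \(\hh^0\) of the divisor class \([x]\) on \(C/k\), not on \(C_{\bar k}\); but \(\HH^0(C, [x])\) commutes with base extension of fields (both sides compute global sections of a line bundle, which are preserved under flat base change), so the lower bound from Riemann--Roch descends correctly to give a \(k\)-rational pencil and hence a morphism \(C \to \PP^1_k\) of degree~\(d\) with \(x\) lying in a fiber over a \(k\)-rational point.
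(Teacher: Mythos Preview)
Your proof is correct and follows essentially the same approach as the paper: apply Riemann--Roch to get \(\hh^0([x]) \geq d+1-g \geq 2\), then invoke Lemma~\ref{lem:P1paramDefineorem}\eqref{cond:linearsystem}. The additional remark about base change is harmless but unnecessary here, since \(\hh^0([x])\) is already defined over \(k\).
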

    \begin{proof}
       By the Riemann--Roch Theorem, \(\hh^0([x]) \geq d + 1 - g\), which by assumption is at least~\(2\). So \(x\) is \(\PP^1\)-parameterized by Lemma~\ref{lem:P1paramDefineorem}.
    \end{proof}
    
    \begin{cor}\label{cor:BijP1isolated}
       The Abel--Jacobi map gives a bijection between the set of degree~\(d\) \(\PP^1\)-isolated points and the set of their divisor classes.
    \end{cor}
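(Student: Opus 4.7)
The plan is to observe that the Abel--Jacobi map, restricted to the set of degree~\(d\) closed points, sends \(x \mapsto [x]\) and therefore surjects tautologically onto the set of divisor classes of such points. Restricting further to the subset of \(\PP^1\)-isolated points, surjectivity onto their divisor classes is again automatic, so the content of the corollary is \emph{injectivity} of this restricted map.

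For injectivity, I would argue by contradiction: suppose \(x\) and \(y\) are two distinct degree~\(d\) \(\PP^1\)-isolated closed points with \([x]=[y]\) in \(\Pic^d_C\). Since each closed point, viewed as a divisor, is just itself with multiplicity one (it is an integral \(0\)-dimensional subscheme), \(x\) and \(y\) are genuinely distinct effective divisors representing the common class \([x]\). Consequently, the complete linear system \(|x|\) contains at least two distinct points, so \(\dim |x| \geq 1\), i.e.\ \(\hh^0([x]) \geq 2\).

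The final step applies the equivalence \eqref{cond:P1param} \(\Leftrightarrow\) \eqref{cond:linearsystem} of Lemma~\ref{lem:P1paramDefineorem}: \(\hh^0([x]) \geq 2\) forces \(x\) to be \(\PP^1\)-parameterized, contradicting our standing assumption that \(x\) is \(\PP^1\)-isolated. There is really no hard step here; the entire corollary is a direct unwinding of the characterization of \(\PP^1\)-isolatedness via the dimension of the Riemann--Roch space, and the only point to take care with is the observation that distinct closed points remain distinct when regarded as effective divisors, which is immediate from their definition as integral subschemes.
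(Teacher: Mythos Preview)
Your proposal is correct and follows essentially the same approach as the paper: both arguments reduce immediately to Lemma~\ref{lem:P1paramDefineorem}, using that a \(\PP^1\)-isolated point \(x\) has \(\hh^0([x]) = 1\), so \(x\) is the unique effective divisor in its linear equivalence class. The paper states this directly rather than by contradiction, but the content is identical.
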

    \begin{proof}
       By Lemma~\ref{lem:P1paramDefineorem}, any \(\PP^1\)-isolated point \(x\) has \(\hh^0([x]) = 1\), so there is a unique effective representative of \([x]\); equivalently, the fiber above \([x]\) in the Abel--Jacobi map \(\Sym^d_C\to \Pic^d_C\) is exactly \(x\).
    \end{proof}   
    \begin{proof}[Proof of Lemma~\ref{lem:P1paramDefineorem}]
        Assume~\eqref{cond:P1param}.  Then there exists a degree~\(d\) morphism \(\pi\colon C\to \PP^1\) that sends \(x\) to a rational point.  Thus the morphism \(\PP^1\to \Sym^d C, t\in \PP^1\mapsto \pi^*t \in \Sym^d C\) of Remark~\ref{rmk:HilbFunctor} satisfies the conditions from~\eqref{cond:P1inSymd}.

        Now assume~\eqref{cond:P1inSymd}.  Since abelian varieties contain no rational curves, the composition \(\PP^1\to \Sym^d C\to \Pic^d_C\) must be constant.  In particular, the image of \(\PP^1\to \Sym^d C\), which is birational to \(\PP^1\) and contains \(x\), must map to \([x]\).  Thus the fiber of \([x]\) in the Abel--Jacobi map \(\Sym^d C \to \Pic^d_C\) is positive dimensional and so \(\hh^0([x])\geq 2\), as claimed in~\eqref{cond:linearsystem}.

        Assume~\eqref{cond:linearsystem}.  
        Since \(\hh^0([x])\geq 2\), there exists a nonconstant function \(\pi\) whose poles are contained in \(x\).  Since \(x\) is irreducible and \(\pi\) is nonconstant, the poles of \(\pi\) must be equal to \(x\), a degree~\(d\) divisor.   Thus, \(\pi\) gives a degree~\(d\) morphism \(C\to \PP^1\) that sends \(x\) to \(\infty\in \PP^1(k)\), and so \(x\) is \(\PP^1\)-parameterized and \(|x|\) is basepoint free. This yields~\eqref{cond:P1param} and~\eqref{cond:bpf}, respectively.

        Finally, assume~\eqref{cond:bpf}. Then there exists an effective divisor linearly equivalent to \(x\) whose support is not equal to \(x\), so the vector space \(H^0(C, x)\) contains nonconstant functions.  Since \(x\) is effective, \(H^0(C, x)\) also contains the constant functions.  Thus \(h^0([x]) \geq 2\), giving~\eqref{cond:linearsystem}.
    \end{proof}

    Lemma~\ref{lem:P1paramDefineorem} assumes the existence of a closed point of degree~\(d\) and gives criteria for it to be \(\PP^1\)-parameterized.  Similar arguments combined with a corollary of Hilbert's Irreducibility Theorem (Proposition~\ref{prop:hilbert-curve}) show that the existence of basepoint free linear systems imply the existence of some \(\PP^1\)-parameterized point.
    \begin{lemma}\label{lem:bpfGivesP1param}
        Let \(C\) be a nice curve over a number field \(k\) and fix a positive integer \(d\).  Then the following are equivalent.
        \begin{enumerate}
            \item There exists an effective degree~\(d\) divisor \(D\) with 
            \(|D|\) basepoint free.\label{part:bpf}
            \item There exists a degree~\(d\) morphism \(\phi\colon C \to \PP^1\).\label{part:Degreedmorphism}
            \item There are infinitely many degree~\(d\) \(\PP^1\)-parameterized points.\label{part:P1paramInfinite}
            \item There exists a degree~\(d\) \(\PP^1\)-parameterized point.\label{part:P1paramExistence}
        \end{enumerate}
    \end{lemma}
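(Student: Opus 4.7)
The plan is to prove the equivalences via the cycle $(1)\Rightarrow(2)\Rightarrow(3)\Rightarrow(4)\Rightarrow(1)$, since each arrow is short given the tools already developed.

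For $(1)\Rightarrow(2)$, I would invoke the standard correspondence between basepoint free linear systems and morphisms to projective space reviewed just before Lemma~\ref{lem:projection}. A basepoint free linear system $|D|$ of positive degree necessarily has $\hh^0(D)\geq 2$ (otherwise $|D|=\{D\}$ and the support of $D$ would be a basepoint), so it induces a nondegenerate morphism $\varphi\colon C\to \PP^r$ with $r=\hh^0(D)-1\geq 1$ and the property that the preimage of a hyperplane is an effective divisor in $|D|$. If $r=1$ we are done. If $r\geq 2$ I would apply Lemma~\ref{lem:projection} to produce a morphism $\pi\colon C\to \PP^1$ together with $p\in\PP^1(k)$ whose fiber $\pi^{-1}(p)$ equals $\varphi^{-1}(H)$ for some hyperplane $H$. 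Since this fiber is a degree~$d$ divisor, $\pi$ has degree~$d$.

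For $(2)\Rightarrow(3)$, I would apply Hilbert's Irreducibility Theorem in the form of Proposition~\ref{prop:hilbert-curve}: the morphism $\phi$ has a Zariski dense set of fibers $\phi^{-1}(z)$, $z\in \PP^1(k)$, that are degree~$d$ closed points of $C$. Each such point $x=\phi^{-1}(z)$ is by definition $\PP^1$-parameterized via $\phi$ itself (with $\phi(x)=z\in\PP^1(k)$), and the Zariski density yields infinitely many distinct such points. The implication $(3)\Rightarrow(4)$ is immediate. Finally, for $(4)\Rightarrow(1)$ I would use the equivalence $\eqref{cond:P1param}\Leftrightarrow\eqref{cond:bpf}$ in Lemma~\ref{lem:P1paramDefineorem}: if $x$ is a degree~$d$ $\PP^1$-parameterized point then $|x|$ is basepoint free, so $D=x$ witnesses~\eqref{part:bpf}.

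The only step that requires any real input beyond what is developed earlier in the section is $(2)\Rightarrow(3)$, which rests on Proposition~\ref{prop:hilbert-curve} (Hilbert's Irreducibility over $\PP^1$); everything else is either a routine application of Lemma~\ref{lem:projection}, an invocation of Lemma~\ref{lem:P1paramDefineorem}, or the tautology $(3)\Rightarrow(4)$. Accordingly, I expect the writeup to be short, with the main conceptual content concentrated in the Hilbert irreducibility step.
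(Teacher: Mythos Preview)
Your proposal is correct and follows essentially the same cycle as the paper: $(1)\Rightarrow(2)$ via Lemma~\ref{lem:projection}, $(2)\Rightarrow(3)$ via Proposition~\ref{prop:hilbert-curve}, $(3)\Rightarrow(4)$ trivially, and $(4)\Rightarrow(1)$ via Lemma~\ref{lem:P1paramDefineorem}. Your treatment of $(1)\Rightarrow(2)$ is in fact slightly more careful than the paper's one-line citation of Lemma~\ref{lem:projection}, since you explicitly handle the case $r=1$ and justify $\hh^0(D)\geq 2$.
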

    \begin{remark}\label{rem:gon_P1param}
        Lemma~\ref{lem:bpfGivesP1param} implies that the gonality of a curve is the smallest degree of a \(\PP^1\)-parameterized point.
    \end{remark}

    \begin{proof}
    Lemma~\ref{lem:projection} gives  \eqref{part:bpf} \(\Rightarrow\) \eqref{part:Degreedmorphism}.
        Now assume~\eqref{part:Degreedmorphism}.  Then by Proposition~\ref{prop:hilbert-curve}, there exists a Zariski dense (hence infinite) set of \(t\in \PP^1(k)\) such that \(C_t\) is a degree~\(d\) point, and each such point is necessarily \(\PP^1\)-parameterized; this gives \eqref{part:P1paramInfinite}.  The implication \eqref{part:P1paramInfinite}~\(\Rightarrow\)~\eqref{part:P1paramExistence} is immediate. 
 Lemma~\ref{lem:P1paramDefineorem} proves that \eqref{part:P1paramExistence}~\(\Rightarrow\)~\eqref{part:bpf}.
    \end{proof}

	\subsection{More constructions of curves with Zariski dense degree~\(d\) points}\label{sec:ConstructionCoversOfEllipticCurves}

        In the previous section, we explored how degree~\(d\) morphisms \(C\to \PP^1\) give rise to a Zariski dense set of degree~\(d\) points. On the other hand, if \(C\) is a degree~\(d\) cover of a positive rank elliptic curve, we saw in Section~\ref{sec:HilbIrred} that there need not be degree~\(d\) points contained in the fibers of this map.  Nevertheless, 
        we may use \(\PP^1\)-parameterized points to show that such a curve must always have Zariski dense degree~\(d\) points.

        \begin{prop}\label{prop:covers_E}
            Let \(C\) be a nice curve over a number field \(k\) and let \(C \to C'\) be a degree~\(d\) cover of a nice genus \(1\) curve \(C'\) with \(\rk \Pic^0_{C'}(k) > 0\). Then 
            \[
                d \ind(C'/k)  \mathbb{Z}_{>0} \subset \dendegs(C/k).
            \] 
            In particular, if \(C'(k)\neq\emptyset\), then \(d\mathbb{Z}_{>0} \subset \dendegs(C/k)\).
        \end{prop}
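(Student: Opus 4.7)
The plan is, for each integer $m \geq 1$, to produce a degree $mde$ morphism $C \to \PP^1$ (where $e \colonequals \ind(C'/k)$) and invoke Proposition~\ref{prop:hilbert-curve}. This construction succeeds whenever $me \geq 2$, so the only remaining case is $m = e = 1$, which is handled by Proposition~\ref{prop:Hilbert_E}, possibly after applying the preceding construction on a smaller-degree cover.

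First I would fix a closed point $y_0 \in C'$ of degree $e$. For any $m$ with $me \geq 2 = 2g(C')$, the effective divisor $m y_0$ has degree at least $2g(C')$, so by a standard consequence of Riemann--Roch on the genus $1$ curve $C'$ the complete linear system $|my_0|$ is basepoint-free. Lemma~\ref{lem:projection} then furnishes a degree $me$ morphism $C' \to \PP^1$, and composing with $\pi$ gives a degree $mde$ morphism $C \to \PP^1$. Proposition~\ref{prop:hilbert-curve} then places $mde \in \dendegs(C/k)$.

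The only case not yet covered is $m = e = 1$, in which $C'(k) \neq \emptyset$, so $C' = E$ is an elliptic curve with $\rk E(k) > 0$, and we must show $d \in \dendegs(C/k)$. If $\pi$ does not factor through a nontrivial connected \'etale subcover of $E$, then Proposition~\ref{prop:Hilbert_E} directly gives a Zariski dense locus of $t \in E(k)$ for which $\pi^{-1}(t)$ is a degree $d$ point, as needed. Otherwise, I would pick any factoring $\pi = f \circ g$ with $g \colon C \to E^*$ and $f \colon E^* \to E$ \'etale of some degree $n^* \geq 2$. Then $E^*$ is a nice genus $1$ curve (\'etale covers preserve smoothness) and $\Pic^0_{E^*}$ is isogenous to $E$, hence of positive rank. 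Setting $e^* \colonequals \ind(E^*/k)$, the fiber $f^{-1}(O)$ over $O \in E(k)$ is a degree $n^*$ subscheme of $E^*$, so $e^* \mid n^*$. Applying the preceding basepoint-free construction to the cover $g$ with integer $m' = n^*/e^*$ (valid since $m' e^* = n^* \geq 2$) produces a morphism $C \to \PP^1$ of degree $m' e^* \cdot \deg(g) = n^* \cdot (d/n^*) = d$, and Proposition~\ref{prop:hilbert-curve} yields $d \in \dendegs(C/k)$.

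The main obstacle is the factored subcase of $m = e = 1$, since Proposition~\ref{prop:Hilbert_E} breaks precisely there. The resolution is to transfer the basepoint-free linear-system argument to the smaller cover $g \colon C \to E^*$ and to exploit the divisibility $\ind(E^*/k) \mid n^*$, which ensures the constructed morphism to $\PP^1$ has degree exactly $d$.
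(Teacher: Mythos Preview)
Your proof is correct and follows essentially the same strategy as the paper's: use Riemann--Roch on \(C'\) to produce basepoint-free linear systems of every degree \(\geq 2\) divisible by the index, compose with \(\pi\), and invoke Proposition~\ref{prop:hilbert-curve}; then in the remaining case \(e=1\) apply Proposition~\ref{prop:Hilbert_E}, handling the factored subcase by passing to an intermediate genus~\(1\) cover whose index divides the degree of the \'etale part. Your treatment of the factored subcase is in fact slightly cleaner than the paper's: you take an arbitrary nontrivial \'etale factor \(f\colon E^*\to E\) and use \(n^*\geq 2\) directly in the basepoint-free construction, whereas the paper factors maximally so that \(C\to C''\) again satisfies the hypothesis of Proposition~\ref{prop:Hilbert_E} and then re-applies the full argument.
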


        \begin{proof} Let \(e \in \ind(C'/k)\mathbb{Z}_{>0}\) be at least \(2\). Then there exists a degree~\(e\) divisor \(D\) on \(C'\) and the Riemann--Roch theorem implies that \(\hh^0(D) \geq 2\). Hence by Lemma~\ref{lem:bpfGivesP1param} 
        \(C'\) can be realized as a degree~\(e\) cover of \(\PP^1\). By composing, \(C\) can be realized as a \(de\) cover of \(\PP^1\), so an application of Proposition~\ref{prop:hilbert-curve} implies that \(de\in \dendegs(C/k)\).   

    Thus it remains to show that if \(\ind(C'/k) = 1\) then \(d\in \dendegs(C/k)\). If \(\ind(C'/k) = 1\), then the Riemann--Roch theorem implies that \(C'(k)\neq\emptyset\) so \(C'\simeq \Pic^0_{C'}\) is a positive rank elliptic curve. If \(C\to C'\) does not factor through any nontrivial connected \'etale covers, then Proposition~\ref{prop:Hilbert_E} gives the desired result. 
    Otherwise, let \(C\to C''\xrightarrow{\varphi} C'\)
    be a factorization such that \(C''\) is connected, \(\varphi\) is \'etale, and \(C \to C''\) does not factor through a nontrivial connected \'etale cover.
    We may apply our argument thus far to \(C\to C''\) and conclude that \(\deg(C\to C'')\ind(C''/k)\mathbb{Z}_{>0}\subset \dendegs(C/k)\). 
    
    The fiber of \(\varphi\) over any rational point of \(C'\) decomposes as a sum of closed points of \(C''\), and the index of \(C''\) divides the greatest common divisor of the degrees of these points.
     Since these degrees sum to the degree of \(\varphi\), the index of \(C''\) divides \(\deg(C'' \to C')\), so
    \[
        d = \deg(C \to C'')\deg(C''\to C) \in \deg(C\to C'')\ind(C''/k)\mathbb{Z}_{>0} \subset \dendegs(C/k).\qedhere
    \]
\end{proof}

        One may wonder whether degree~\(d\) morphisms to \(\PP^1\) or elliptic curves of positive rank are the only construction of infinitely many degree~\(d\) points. Harris and Silverman proved that if \(C\) is a nice curve over a number field \(k\) with infinitely many degree~\(2\) points, then \(C\) must be a double cover of a curve of genus \(0\) or \(1\)~\cite{HS-Degree2}.  Abramovich and Harris proved a similar result (allowing passage to a finite extension \(k'/k\)) for degree~\(3\) points on all nice curves, and degree~\(4\) points on all nice curves of genus different from 7~\cite{AH-Degree3And4}.\footnote{Kadets and the second author recently strengthened the Harris--Silverman results to show that \(C\) is a double cover of \(\PP^1\) or an elliptic curve of positive rank~\cite{Kadets--Vogt}*{Theorem 1.2(1)}.  The analogous  strengthening of the Abramovich--Harris result can only fail in genus 3 or 4, see~\cite{Kadets--Vogt}*{Theorem 1.2(2)} .}  However, Debarre and Fahlaoui constructed an example showing that, counter to Abramovich and Harris's expectation, this is false in general: there exists a nice genus \(7\) curve with infinitely many degree~\(4\) points and \emph{no} degree~\(4\) map to \(\PP^1\) or a genus \(1\) curve (even geometrically)~\cite{DF-CounterEx}.

		In essence, the reason that such a curve is possible is the fact that Lemma~\ref{lem:P1paramDefineorem} has no complete analog where \(\PP^1\) is replaced by a genus \(1\) curve. Indeed, the existence of a degree~\(d\) morphism \(f\colon C\to E\) implies the existence of an embedding \(E\to \Sym^d_C, \; P\mapsto f^*P\) as in Remark~\ref{rmk:HilbFunctor}.  But the converse fails.  An embedding \(E\to \Sym^d_C\) does not necessarily imply the existence of a degree~\(d\) morphism \(C\to E\), as the construction of Debarre and Fahlaoui shows.  (If there is an embedding \(E \to \Sym^d_C\) such that the composition \(E \to \Sym^d_C \to \Pic^d_C\) is nonconstant, then there is \emph{some} finite morphism \(C \to E\), but not necessarily of degree~\(d\).)

	\subsection{AV-parameterized points}\label{sec:av}  
        Debarre and Fahlaoui's example shows that we need a more expansive interpretation of what it means for a subset of degree~\(d\) points to be ``parameterized by a positive rank elliptic curve''.  Additionally, Faltings's Subvarieties of AV Theorem suggests that we should also consider parameterizations by positive rank higher dimensional abelian varieties. Taking these as inspiration, we make the following definition (part of which first appeared in~\cite{BELOV}).
		\begin{defn}\label{def:AVparam}
			Let \(C\) be a nice curve over a number field \(k\).
						
			A degree~\(d\) closed point \(x\in C\) is \defi{AV-parameterized} if there exists a positive rank abelian subvariety \(A\subset \Pic^0_C\) such that \([x] + A\subset W_C^d = \im (\Sym^d_C \to \Pic^d_C)\) (c.f. Section~\ref{sec:AJ}).

			A closed point \(x\in C\) is \defi{AV-isolated} if it is not AV-parameterized.
		\end{defn}

        \begin{remark}\label{rem:Zariski_dense}
            Since the Zariski closure \(\overline{A(k)}\) of \(A(k)\) in \(A\) is an abelian subvariety, up to replacing \(A\) with \(\overline{A(k)}\) we may assume that \(A\) has dense \(k\)-points.
        \end{remark}

        The definition of an AV-parameterized point may seem unsatisfying on a first reading. Indeed, the condition that the translate \([x] + A\) is contained in \(W^d_C\) does not invoke the feeling of a ``parameterization'', particularly when compared to the definition of a \(\PP^1\)-parameterized point. However, as we will see in this section, there are many ways that AV-parameterized points behave analogously to \(\PP^1\)-parameterized points, which justifies the terminology.

        In Lemma~\ref{lem:P1paramDefineorem}, we saw that a point \(x\) is \(\PP^1\)-parameterized if and only if there is a \(\PP^1\subset \Sym^d_C\) that contains \(x\).
        In Definition~\ref{def:AVparam}, the geometric object \([x] + A\) parameterizing the AV-parameterized degree~\(d\) points is contained in \(W^d_C\subset\Pic^d_C\) (as opposed to \(\Sym^d_C\)).  However, if \(x\) is AV-parameterized \emph{and} \(\PP^1\)-isolated, then the following lemma shows that a nonempty open in \(A\) is contained in \(\Sym^d_C\).

        \begin{lemma}
    Let \(x\in C\) be an AV-parameterized \(\PP^1\)-isolated degree~\(d\) point. There exists a positive rank abelian variety \(A\) and a rational map \(A\dasharrow \Sym^d_C\) whose image contains \(x\).
\end{lemma}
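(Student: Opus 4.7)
The plan is to invert the Abel--Jacobi morphism $\alpha \colon \Sym^d_C \to \Pic^d_C$ over a dense open of $[x] + A$, and then transport the resulting section back to $A$ via the translation map. By the AV-parameterization hypothesis, there is a positive rank abelian subvariety $A \subset \Pic^0_C$ with $[x] + A \subset W_C^d$; let $\phi \colon A \to \Pic^d_C$ denote the translation $a \mapsto [x] + a$. Since $x$ is $\PP^1$-isolated, Lemma~\ref{lem:P1paramDefineorem} gives $\hh^0([x]) = 1$, so by upper semicontinuity of $\hh^0$ on $\Pic^d_C$ the locus $U \subset \Pic^d_C$ where $\hh^0 \leq 1$ is open, and hence $V := \phi^{-1}(U) \subset A$ is a nonempty open neighborhood of $0$.

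Next, consider the base change $Z := \Sym^d_C \times_{\Pic^d_C} A$ with projections $\pi \colon Z \to A$ and $p \colon Z \to \Sym^d_C$. For $a \in A$, the scheme-theoretic fiber $\pi^{-1}(a)$ is the complete linear system $|x + a| \cong \PP^{\hh^0([x+a]) - 1}_{\kappa(a)}$, which over $V$ is a single reduced $\kappa(a)$-point. Since $\pi$ is proper (base change of the proper morphism $\alpha$) and quasi-finite over $V$, the restriction $\pi|_{\pi^{-1}(V)}$ is finite; its reduced length-one fibers make it finite \'etale of degree one, i.e., an isomorphism onto $V$.

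Composing the inverse $V \xrightarrow{\sim} \pi^{-1}(V)$ with $p$ yields a morphism $\psi \colon V \to \Sym^d_C$, and because $|x| = \{x\}$ we have $\psi(0) = x$. Extending $\psi$ to a rational map $A \dashrightarrow \Sym^d_C$ finishes the proof, since its image contains $\psi(0) = x$.

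The main obstacle is pinning down the scheme-theoretic structure of the Abel--Jacobi fibers, in order to justify that $\pi|_{\pi^{-1}(V)}$ is an isomorphism rather than merely a set-theoretic bijection. Once one invokes the classical identification of these fibers with reduced projective spaces $\PP^{\hh^0-1}$, everything else follows formally from properness, the semicontinuity of $\hh^0$, and the definition of AV-parameterized.
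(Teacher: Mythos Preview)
Your proof is correct and follows essentially the same approach as the paper: use the \(\PP^1\)-isolated hypothesis together with upper semicontinuity of \(\hh^0\) to find an open neighborhood of \([x]\) in \([x]+A\) over which the Abel--Jacobi map is invertible, then compose the inverse with translation. Your version is in fact more careful than the paper's, which simply asserts the existence of an inverse morphism over the locus where the map is one-to-one; your justification via the reduced fibers \(\PP^{\hh^0-1}\) and the finite-with-reduced-length-one-fibers argument fills in exactly the detail the paper elides.
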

\begin{proof}
    Since \(x\) is AV-parameterized, there exists a positive rank abelian subvariety \(A\subset \Pic^0_C\) such that \([x] + A\subset W_C^d\). Furthermore, since \(x\) is \(\PP^1\)-isolated, there exists an open set \(U\subset W_C^d\) containing \(x\) over which \(\Sym^d_C\to \Pic^d_C\) is one-to-one. Thus, we have an inverse morphism \(U\cap A\to \Sym^d_C\) as desired.
\end{proof}

\begin{remark}
	Since projective space of any dimension contains \(\PP^1\), any ``\(\PP^n\)-parameterized point'' (under the natural generalization of Lemma~\ref{lem:P1paramDefineorem}\eqref{cond:P1inSymd}) would also be a \(\PP^1\)-parameterized point (cf.~Lemma~\ref{lem:projection}).  However, that is not the case for abelian varieties, since there exist simple abelian varieties of dimension at least~\(2\). For example, any nice curve of genus \(2\) whose Jacobian \(\Pic^0_C\) is simple with \(\rk(\Pic^0_C(k)) \geq 1\) will have \(\PP^1\)-isolated, AV-parameterized points of degree~\(2\) that are \emph{not} parameterized by elliptic curves.  This is consistent with the Harris--Silverman result mentioned above because the canonical linear system is a degree~\(2\) morphism \(C \to \PP^1_k\).  In other words, the results of Harris--Silverman and Abramovich--Harris yield (after possibly passing to a finite extension) a geometric construction that explains an infinite subset
    of the set of degree~\(2\) and \(3\) points, but not necessarily
    \emph{all but finitely many} of the degree~\(2\) or \(3\) points. In Section~\ref{sec:SingleSource} we show that if the genus of \(C\) is large enough, a single geometric construction \emph{does} explain all but finitely many degree~\(d\) points.
\end{remark}

    In Lemma~\ref{lem:bpfGivesP1param}, we saw that the existence of a single \(\PP^1\)-parameterized point gives rise to infinitely many \(\PP^1\)-parameterized points of the same degree. Bourdon, Ejder, Liu, Odumodu, and the first author observed that repeated applications of Faltings's Subvarieties of AV Theorem implies a similar result for AV-parameterized \(\PP^1\)-isolated points.
    \begin{prop}[\cite{BELOV}*{Proof of Theorem 4.2}; relies on~\cite{Faltings-GeneralLang}]\label{prop:P1isolAVparam}
		Let \(C\) be a nice curve over a number field \(k\). Let \(x\in C\) be a degree~\(d\) point that is \(\AV\)-parameterized and \(\PP^1\)-isolated. Let \(A\subset  \Pic^0_C\) be a positive rank abelian variety such that \([x] + A\subset  W_C^d\).  Then there exists a finite index subgroup \(H< A(k)\) such that every element of \([x] + H\) is represented by a degree~\(d\) point.  In particular, there are infinitely many  degree~\(d\) points on \(C\).
	\end{prop}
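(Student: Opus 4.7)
The plan is to apply Faltings's Subvarieties of AV Theorem to suitable preimages of $[x] + A$ in each $W_C^{\bd}$, and then to use $\PP^1$-isolation of $x$ to show that the resulting ``bad cosets'' never contain the origin. First, invoke Lemma~\ref{lem:FiniteIndexEffective} to produce a finite index subgroup $B_0 < A(k)$ with $[x] + B_0 \subset \im(\Sym^d_C(k) \to W_C^d(k))$, so every class in $[x] + B_0$ admits a $k$-rational effective degree-$d$ divisor representative. By Corollary~\ref{cor:image_summation_reducible}, such a class is represented by a degree-$d$ integral point exactly when it avoids $T_{\bd} := \im(+ \colon W_C^{\bd}(k) \to W_C^d(k))$ for every nontrivial partition $\bd = (d_1, \ldots, d_r) \vdash d$. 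The problem thus reduces to finding a finite index subgroup $H < B_0$ of $A(k)$ with $([x] + H) \cap T_{\bd} = \emptyset$ for each such $\bd$.

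For each nontrivial $\bd$ with $W_C^{\bd}(k) \neq \emptyset$, let $Y_{\bd} \subset W_C^{\bd}$ be the preimage of $[x] + A$ under the summation $+ \colon W_C^{\bd} \to W_C^d$. After fixing $k$-rational basepoints to identify $\Pic^{\bd}_C$ with the abelian variety $(\Pic^0_C)^r$, Faltings's Subvarieties of AV Theorem (Theorem~\ref{thm:Faltings91}) decomposes $Y_{\bd}(k) = \bigcup_i (z_i + A'_i(k))$ into finitely many translates of abelian subvarieties $A'_i \subset (\Pic^0_C)^r$. Pushing this through the summation homomorphism identifies $B_{\bd} := \{b \in A(k) : [x] + b \in T_{\bd}\}$ with a finite union of cosets $a_i + \pi(A'_i)(k)$, where each $\pi(A'_i)$ is an abelian subvariety of $A$.

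The main obstacle is to show $[x] \notin T_{\bd}$, i.e., $0 \notin B_{\bd}$, for every nontrivial $\bd$. Suppose toward contradiction that $[x] = \sum_j [E_j]$ with $[E_j] \in W_C^{d_j}(k)$ and $r \geq 2$. Since $\hh^0([x]) = 1$ by $\PP^1$-isolation (Lemma~\ref{lem:P1paramDefineorem}), a quick ``moving'' argument forces $\hh^0([E_j]) = 1$ for each $j$: two distinct effective representatives of some $[E_{j_0}]$, combined with fixed representatives of the remaining $[E_j]$, would produce two effective representatives of $[x]$, violating $\hh^0([x]) = 1$. Lemma~\ref{lem:BrSev} then provides a $k$-rational effective representative $F_j$ of each $[E_j]$, so $F_1 + \cdots + F_r$ is a $k$-rational effective divisor in the one-element linear system $|x| = \{x\}$, forcing $x = F_1 + \cdots + F_r$ and contradicting the primality of $x$ as an effective divisor. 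Consequently each $\pi(A'_i) \subsetneq A$, since otherwise the coset $a_i + A(k) = A(k)$ would contain $0$.

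To conclude, each bad coset $a_i + \pi(A'_i)(k)$ is a nontrivial coset in the finitely generated abelian group $A(k)/\pi(A'_i)(k)$, so a homomorphism to a suitable finite group sends its class to a nonzero element; the kernel pulls back to a finite index subgroup of $A(k)$ disjoint from the coset. Intersecting $B_0$ with the finitely many such subgroups (one per bad coset, ranging over all nontrivial $\bd$) produces the desired $H$, and Corollary~\ref{cor:image_summation_reducible} then guarantees that every element of $[x] + H$ is represented by a degree-$d$ integral point.
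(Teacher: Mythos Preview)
Your proof is correct and follows essentially the same strategy as the paper's: use Lemma~\ref{lem:FiniteIndexEffective} to get effective representatives, reduce via Corollary~\ref{cor:image_summation_reducible} to avoiding the images $T_{\bd}$, invoke $\PP^1$-isolation to show $[x]\notin T_{\bd}$ (the paper isolates this as Lemma~\ref{lem:P1isolatedIrreducible}), apply Faltings to write the bad loci as finite unions of cosets missing $0$, and finish with the group-theoretic step (the paper's Lemma~\ref{lem:CorOfFGAG}). One minor point: the subgroups in your cosets should be $\pi(A'_i(k))$, the image of the $k$-points, rather than $(\pi(A'_i))(k)$ --- these can differ, and your aside ``$\pi(A'_i)\subsetneq A$'' conflates them --- but this is harmless since all you actually use is that $0$ lies in none of the cosets, which follows directly from $0\notin B_{\bd}$.
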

    While a point \(x\) that is \(\PP^1\)- \emph{and} AV-parameterized also implies the existence of infinitely many \(\PP^1\)-parameterized points of the same degree by Lemma~\ref{lem:bpfGivesP1param},\footnote{{The proof of Lemma~\ref{lem:bpfGivesP1param} can be used to prove the stronger statement that the existence of a point that is \(\PP^1\)- and AV-parameterized implies the existence of infinitely many points that are \(\PP^1\)- and AV-parameterized.}} the more detailed claim given in Proposition~\ref{prop:P1isolAVparam} can fail for such points, as pointed out to us by James Rawson. In other words, the \(\PP^1\)-isolated assumption is necessary; see Example~\ref{ex:rawson} below. 

    Before giving the proof of Proposition~\ref{prop:P1isolAVparam}, we  note that, conversely, AV-isolated points satisfy the following finiteness property.
    
        \begin{cor}
        [Corollary of Faltings's Subvarieties of AV Theorem]
        \label{cor:FiniteImAVisol}
            Fix an integer \(d\geq 1\).  The image of the degree~\(d\) AV-isolated points under the Abel--Jacobi map is finite.
        \end{cor}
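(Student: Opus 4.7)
The plan is to deduce this as a direct consequence of Faltings's Subvarieties of AV Theorem (Theorem~\ref{thm:Faltings91}) applied to the subvariety $W_C^d$ of $\Pic^d_C$. First I would reduce to the case where $\Pic^d_C(k) \neq \emptyset$; otherwise $W_C^d(k) = \emptyset$ and there are no degree~$d$ points at all, so the claim is vacuous. Once a rational point $[D_0] \in \Pic^d_C(k)$ is fixed, translation by $-[D_0]$ identifies $\Pic^d_C$ with the abelian variety $\Pic^0_C$, realizing $W_C^d$ as a closed subvariety of an abelian variety and placing us in the setting of Theorem~\ref{thm:Faltings91}.

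Applying that theorem yields a finite decomposition
\[W_C^d(k) = \bigcup_{i=1}^N \bigl(w_i + A_i(k)\bigr),\]
where each $A_i \subset \Pic^0_C$ is an abelian subvariety with $w_i + A_i \subset W_C^d$. Since the divisor class of every degree~$d$ point on $C$ lies in this union, it suffices to show that each coset either contributes only finitely many AV-isolated classes or none at all. I would analyze this by dichotomizing on $\rk A_i(k)$.

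If $\rk A_i(k) = 0$, then $A_i(k)$ is a finite torsion group and the coset $w_i + A_i(k)$ contains only finitely many classes. If instead $\rk A_i(k) \geq 1$, then $A_i$ is a positive rank abelian subvariety of $\Pic^0_C$, and for any $[x] = w_i + a \in w_i + A_i(k)$ the subgroup property of $A_i$ gives
\[[x] + A_i \;=\; w_i + a + A_i \;=\; w_i + A_i \;\subset\; W_C^d,\]
so $[x]$ is the divisor class of an AV-parameterized point. Hence every AV-isolated degree~$d$ class must come from a coset of the first type, of which there are finitely many, each contributing finitely many classes. The only mild subtlety is keeping track of the distinction between positive dimension and positive rank (Theorem~\ref{thm:Faltings91} permits an $A_i$ to have positive dimension while $A_i(k)$ is finite), but this is precisely what is handled by the case split, and I do not anticipate any further obstacle.
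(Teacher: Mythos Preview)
Your proposal is correct and follows essentially the same approach as the paper: apply Faltings's Subvarieties of AV Theorem to $W_C^d$ inside $\Pic^d_C$, then observe that any degree~$d$ point whose class lands in a coset $w_i + A_i(k)$ with $\rk A_i(k) > 0$ is AV-parameterized by definition, so AV-isolated classes are confined to the finitely many rank~$0$ cosets. The paper's proof is more terse and skips the explicit reduction to $\Pic^d_C(k)\neq\emptyset$ and the translation to $\Pic^0_C$, but the substance is identical; your remark distinguishing positive dimension from positive rank is exactly the right observation.
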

        \begin{proof}
        Degree~\(d\) points map to \(W^d_C(k)\) under the Abel-Jacobi map. 
        By Faltings's Subvarieties of AV Theorem, there exist finitely many points \(D_1, \dots D_r\in W^d_C(k)\) and finitely many abelian subvarieties \(A_1, \dots A_r\subset  \Pic^0_C\) such that \(D_i + A_i \subset  W^d_C\) for all \(i\) and
        \[
            W^d_C(k) = \bigcup_{i=1}^r D_i + A_i(k).
        \]
        By definition, degree~\(d\) AV-isolated points must map to a translate \(D_i + A_i(k)\), where \(A_i\) has rank~\(0\). Since a finite union of translates of rank~\(0\) abelian subvarieties has finitely many points, there are only finitely many images of degree~\(d\) AV-isolated points.
        \end{proof}

	\begin{proof}[Proof of Proposition~\ref{prop:P1isolAVparam}]
		By Lemma~\ref{lem:FiniteIndexEffective}, there is a finite index subgroup \(B <  A(k)\) such that every point in \([x]+B\) is represented by an effective \(k\)-rational divisor.  By Lemma~\ref{lem:image_summation_reducible}, all points of \([x] + B\) that are in the complement of \(\im \left(+ \colon W_C^{\mathbf{d}}(k)\to W_C^d(k)\right)\) for all nontrivial partitions \(\mathbf{d}\vdash d\) give integral divisors, i.e., degree~\(d\) points. Thus it suffices to prove that there is a finite index subgroup \(H<  B\) such that \([x] + H\) is disjoint from \(\im \left(+ \colon W_C^{\mathbf{d}}(k)\to W_C^d(k)\right)\) for all nontrivial partitions \(\mathbf{d}\vdash d\). By translating by \(-[x]\), this is equivalent to showing the existence of a finite index subgroup \(H< B\) that is disjoint from \(\im \left(W^{\mathbf{d}}(k)\xrightarrow{+} W_C^d(k) \xrightarrow{-[x]} \Pic^0_C(k)\right)\). Since \(x\) is \(\PP^1\)-isolated, \(x\notin \im \left(+ \colon W_C^{\mathbf{d}}(k)\to W_C^d(k)\right)\) for all nontrivial partitions \(\mathbf{d}\vdash d\) (see Lemma~\ref{lem:P1isolatedIrreducible} below), so \(0\notin\im \left(W^{\mathbf{d}}(k)\xrightarrow{+} W_C^d(k) \xrightarrow{-[x]} \Pic^0_C(k)\right)\).

        Fix a nontrivial partition \(\mathbf{d}=(d_i)_{i=1}^r\vdash d\). By Faltings's Subvarieties of AV Theorem, the set of rational points \(W_C^{\mathbf{d}}(k)\) is a finite union of translates of \(k\)-points on abelian subvarieties, so the image \(\im \left(W^{\mathbf{d}}(k)\xrightarrow{+} W_C^d(k) \xrightarrow{-[x]} \Pic^0_C(k)\right)\) is also a finite union of translates of \(k\)-points on abelian subvarieties. In other words, there is a finite collection of subgroups \(G_i <  A(k)\) (necessarily finitely generated) and divisors \(D_i\) such that 
	\[
		\bigcup_{\substack{\mathbf{d}\vdash d\\\mathbf{d} \neq (d)}}\im\left(W^{\mathbf{d}}(k)\xrightarrow{+} W_C^d(k) \xrightarrow{-[x]} \Pic^0_C(k)\right) = \bigcup_{i=1}^s D_i + G_i	
	\]

        Fix \(1\leq i \leq s\). The intersection  \(B\cap (D_i + G_i)\) is either empty or it contains some \(D_i' \in D_i  + G_i\).  Since \((D_i + G_i)- D_i' = G_i\), we see that \(B\cap (D_i + G_i) = D_i' + B\cap G_i\).  Up to replacing \(D_i\) by \(D_i' \in D_i + G_i\), we may therefore assume that \(B\cap (D_i + G_i) = \emptyset\) or \(B\cap (D_i + G_i) =\)
        \(D_i + B\cap G_i\). In particular, after possibly reordering the \((D_i, G_i)\), we have:
        \[
            B\cap \left[\bigcup_{\substack{\mathbf{d}\vdash d\\\mathbf{d} \neq (d)}}\im\left(W^{\mathbf{d}}(k)\xrightarrow{+} W_C^d(k) \xrightarrow{-[x]} \Pic^0_C(k)\right)\right] = \bigcup_{i=1}^{s'} D_i + B\cap G_i \subset  B\smallsetminus\{0\}.
        \]
        Recall that we wish to show that the finitely generated group \(B\) contains a finite index subgroup \(H< B\) such that the union of cosets \(\bigcup_{i=1}^{s'} D_i + B\cap G_i\) misses all of \(H\). It is a corollary of the fundamental theorem of finitely generated abelian groups (see Lemma~\ref{lem:CorOfFGAG} below) that any finite union of cosets in a finitely generated abelian group either covers the group completely or is disjoint from a coset of a finite index subgroup. Since the union of cosets \(\bigcup_{i=1}^{s'} D_i + B\cap G_i\) already omits \(0\), it must avoid a finite index subgroup \(H\).
\end{proof}

	\begin{lemma}\label{lem:P1isolatedIrreducible}
		If \(x\in C\) is a point of degree~\(d\) that is \(\PP^1\)-isolated, then \([x]\) is not in the image of \(W_C^e{(k)}\times W_C^{d-e}{(k)}\) for any \(0<e<d\).
	\end{lemma}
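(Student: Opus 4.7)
The plan is a proof by contradiction. I would assume that \(x\) is \(\PP^1\)-isolated and also that \([x] = [D_1] + [D_2]\) for some classes \([D_i] \in W_C^{d_i}(k)\) with \(d_1, d_2 > 0\) summing to \(d\). By Lemma~\ref{lem:P1paramDefineorem}, the \(\PP^1\)-isolated hypothesis translates to \(\hh^0([x]) = 1\), which by flat base change of cohomology equals \(\hh^0([x]_{\bar{k}})\). Thus \([x]\) has a unique effective representative, even after base change to \(\bar{k}\), namely \(x\) itself.

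Next I would use this rigidity to pin down the summands. Picking geometric effective representatives \(E_i\) of \([D_i]\), the divisor \(E_1 + E_2\) is a geometrically effective representative of \([x]\), and the uniqueness above forces \(E_1 + E_2 = x\) as divisors on \(C_{\bar{k}}\). Viewing \(x\) as the reduced sum \(\sum_{y \in O_x} y\) over the Galois orbit \(O_x\) corresponding to \(x\), this forces \(E_1\) to be of the form \(\sum_{y \in S} y\) for some subset \(\emptyset \neq S \subsetneq O_x\) of size \(d_1\).

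The final step is to apply Galois invariance. For any \(\sigma \in G_k\), the conjugate \(\sigma(E_1)\) is again an effective representative of \([D_1]\), since \([D_1] \in \Pic^{d_1}_C(k)\) is \(G_k\)-invariant. Applying the same uniqueness argument to the effective divisor \(\sigma(E_1) + E_2\), which still represents \([x]\), yields \(\sigma(E_1) + E_2 = x = E_1 + E_2\) and hence \(\sigma(E_1) = E_1\). Thus \(S \subsetneq O_x\) is Galois-stable, contradicting the fact that \(O_x\) is a single Galois orbit.

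I do not anticipate a substantive obstacle: the heart of the argument is the uniqueness of the geometric effective representative of \([x]\), which follows immediately from the \(\PP^1\)-isolated hypothesis via Lemma~\ref{lem:P1paramDefineorem}, after which the orbit-indecomposability of the single closed point \(x\) handles the rest. The one point to double-check is simply that \(\hh^0\) is insensitive to base change, which is standard.
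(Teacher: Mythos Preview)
Your proof is correct and follows essentially the same approach as the paper: both use \(\hh^0([x])=1\) to force uniqueness of the effective representative, and then Galois-descend the would-be summands to obtain a \(k\)-rational decomposition of \(x\), contradicting its irreducibility. The paper packages the descent step by citing Lemma~\ref{lem:BrSev} and the irreducibility step by citing Lemma~\ref{lem:image_summation_reducible}, whereas you unpack both inline over \(\bar{k}\); the arguments are otherwise identical.
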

	\begin{proof}
        Consider some \(0 < e < d\).
        By assumption, \(x\) is a degree~\(d\) point, and hence 
        by Lemma~\ref{lem:image_summation_reducible} it is not in the image of the summation \(\sigma_{(e, d-e)} \colon \Sym^{(e, d-e)}_C(k) \to \Sym^d_C(k)\).
          \begin{center}
        \begin{tikzcd}
            \Sym^{(e, d-e)}_C(k) \arrow[d, "\sigma_{(e, d-e)}"] \arrow[r] & W_C^{(e, d-e)}(k)  \arrow[d]  \\
            \Sym^d_C(k) \arrow[r]& W_C^d(k) 
        \end{tikzcd}
    \end{center}
    Since \(x\in C\) is \(\PP^1\)-isolated,  Lemma~\ref{lem:P1paramDefineorem} implies that \(\hh^0([x]) = 1\).  Thus, 
        \(x\) is the unique preimage of the Abel--Jacobi map \(\Sym^d_C(k) \to W_C^d(k)\) over \([x]\).  Further, any point in \(W_C^{(e,d-e)}(k)\) mapping to \([x]\) would also have a unique preimage under the Abel--Jacobi map and thus  would be in the image of \(\Sym^{(e, d-e)}_C(k)\) by Lemma~\ref{lem:BrSev}.  Hence no such point exists.
	\end{proof}

\begin{lemma}\label{lem:CorOfFGAG}
    Let \(B\) be a finitely generated abelian group. Let \(I\) be a finite set and, for each \(i \in I\), let \(H_i <  B\) be a subgroup and let \(y_i\in B\). If there exists an \(x\in B\) that is not in \(y_i + H_i\) for any \(i \in I\), then there is a finite index subgroup \(H <  B\) such that \((x + H) \cap (y_i + H_i) = \emptyset\) for all \(i\in I\). In other words,
    \[
        B\setminus \left(\bigcup_{i\in I} y_i + H_i \right) = \emptyset \quad \textup{or} \quad
        B\setminus \left(\bigcup_{i\in I} y_i + H_i \right) \supset x + H \;\textup{for some \(H < B\) of finite index}.
    \]
\end{lemma}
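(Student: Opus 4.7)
The plan is to exploit residual finiteness of finitely generated abelian groups to enlarge each $H_i$ to a finite-index subgroup that still witnesses $x \notin y_i + H_i$, and then intersect these enlargements.

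\textbf{Step 1.} For each $i \in I$, the hypothesis $x \notin y_i + H_i$ is equivalent to saying that the class of $x - y_i$ in the quotient $B/H_i$ is nonzero.

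\textbf{Step 2.} The quotient $B/H_i$ is itself a finitely generated abelian group, so by the structure theorem it is isomorphic to $\mathbb{Z}^{r_i} \oplus T_i$ for some finite abelian group $T_i$. Such groups are residually finite: any nonzero element admits a homomorphism to a finite cyclic group under which it remains nonzero (project onto a free factor on which it has nontrivial image, then reduce modulo a sufficiently large integer, or else project onto a nontrivial torsion factor). Applying this to the class of $x - y_i$ and pulling back yields a finite-index subgroup $H_i^\ast < B$ containing $H_i$ such that $x - y_i \notin H_i^\ast$.

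\textbf{Step 3.} Set $H := \bigcap_{i \in I} H_i^\ast$. Since $I$ is finite and each $H_i^\ast$ has finite index in $B$, the subgroup $H$ has finite index in $B$. It remains to verify that $(x + H) \cap (y_i + H_i) = \emptyset$ for every $i \in I$. Suppose for contradiction that $x + h = y_i + h'$ for some $h \in H$ and $h' \in H_i$. Then
\[
    x - y_i = h' - h \in H_i + H \subset H_i^\ast + H_i^\ast = H_i^\ast,
\]
contradicting the defining property of $H_i^\ast$.

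I expect the argument to be essentially routine once residual finiteness is invoked in Step 2; the only point requiring care is verifying that a nonzero element of an arbitrary finitely generated abelian group survives in some finite quotient, which is immediate from the structure theorem but is the conceptual crux of the proof. No appeal to the infinite/finite-index dichotomy for the $H_i$ is needed, as the residual finiteness argument treats both cases uniformly.
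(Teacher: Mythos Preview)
Your proof is correct and follows essentially the same approach as the paper: both enlarge each \(H_i\) to a finite-index subgroup \(H_i^\ast\) (the paper calls it \(H_i'\)) still missing \(x - y_i\), then intersect. The only cosmetic difference is that the paper realizes residual finiteness concretely via \(\bigcap_{m\geq 1} m(B/H_i) = 0\) (taking \(H_i'\) as the preimage of \(m_i(B/H_i)\) for suitable \(m_i\)), whereas you invoke residual finiteness by name and sketch it via the structure theorem.
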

\begin{proof}
    Since \(x\notin y_i + H_i\), the class \(z_i := x -y_i+ H_i\in B/H_i\) is nontrivial for all \(i\). By the fundamental theorem of finitely generated abelian groups, \(\bigcap_{m\geq 1}m(B/H_i)\) is trivial, thus there is some positive integer \(m_i\) such that \(z_i\not\in m_i(B/H_i)\).  Let \(H_i' < B\) be the preimage of \(m_i(B/H_i)\) in \(B\); then \(H_i'< B\) is of finite index, \(x \not\in y_i + H_i'\), and \(H_i <  H_i'\).
    Set \(H := \bigcap_i H_i'\). Since each \(H_i'\) is of finite index in \(B\), the intersection \(H\) is of finite index in \(B\).  Since \(x \not\in y_i + H_i'\) and \(H <  H_i'\) we have \((x + H )\cap(y_i  + H_i') = \emptyset\) for all \(i\).  Thus \((x + H )\cap(y_i  + H_i) = \emptyset\) since \(y_i + H_i \subset  y_i+ H_i'\).
\end{proof}

    \begin{example}\label{ex:rawson}
        This example is due to James Rawson. Let \(C\) be a nice genus \(4\) hyperelliptic curve with a degree~\(3\) map \(\pi \colon C \to E\) to a positive rank elliptic curve \(E\) and assume that there exists a rational Weierstrass point \(p \in C(k)\) that is also a point of total ramification for \(\pi\). Fix the isomorphism \(E\simeq \Pic^0(E)\) that identifies \(\pi(p)\in E\) with the zero divisor.  (For example, we may take \(C: y^2 = u^9- 6u^6 - 4u^3 + 40\), which maps to the elliptic curve \(E:y^2 = x^3 -16x + 16\) by the map \((u,y) \mapsto (u^3 - 2, y)\). The Weierstrass point \(p\in C(\Q)\) that lies over \(\infty\in \PP^1\) is totally ramified under this morphism and \(E\) has Mordell--Weil rank 1~\cite{lmfdb}*{\href{https://www.lmfdb.org/EllipticCurve/Q/37/a/1}{37.a1}}.)

        Pullback induces an embedding \(E\simeq\Pic^0_E \hookrightarrow \Pic^0_C\), which in turn gives \([4p] + E \subset W_C^4\) as the locus of divisor classes of the form \([p] + \pi^{-1}(y)\) for some \(y\in E\). We will show that there exists a degree~\(4\) point \(x\in C\) such that \([x] = [4p]\), but that all points of  \([x] + (E(k)\smallsetminus\{O\})\) are represented only by non-integral divisors. In particular, \(x\) is AV-parameterized, \(\PP^1\)-parameterized, and the conclusion of Proposition~\ref{prop:P1isolAVparam} fails. 
        
        Since \(p\) is a Weierstrass point (hence \(4p\) is twice a hyperelliptic divisor), the linear system \(|4p|\) is basepoint free of dimension \(2\).  On the other hand, by the adjunction formula (c.f.~Section~\ref{sec:single_source_key}) any degree~\(4\) map \(C \to \PP^1\) factors through the hyperelliptic map.  Hence \([4p]\) is the \emph{unique} point in \(W_C^4\) whose complete linear system is basepoint-free.
       
       Since \(|4p|\) is basepoint-free, by Lemma~\ref{lem:bpfGivesP1param} there exists a degree~\(4\) point \(x \in C\) such that \([4p] = [x]\). By definition, every point in \([x] + E(k)\) is represented by a reducible divisor \(p + \pi^{-1}(y)\) for \(y \in E(k)\).  If \(y \neq \pi(p)\), the linear system \(|p + \pi^{-1}(y)|\neq |4p|\) has a basepoint by the argument above and thus every effective divisor \(D\in |p + \pi^{-1}(y)|\) must contain at least one of the \(k\)-rational effective divisors \(\{p\}\cup \textup{Supp}(\pi^{-1}(y))\) of degree at most \(3\) in its support. In particular, \(D\) is never integral.  Thus \([x]\) is the unique element of \([x] + E(k)\) represented by a degree~\(4\) point, and no finite index subgroup \(H <  E(k)\) as in Proposition~\ref{prop:P1isolAVparam} exists.\hfill\(\righthalfcup\)
    \end{example}

  \subsubsection{Comparison with the Ueno locus}
            Let \(A/F\) be an abelian variety and let \(X\subset  A\) be a subvariety.  The \defi{Ueno locus} of \(X\) is the union of the positive dimensional (geometric) cosets contained in \(X\), i.e., 
            \[
            \Ueno(X) \colonequals \bigcup_{\substack{B\subset  A_{\bar{F}}, x\in X_{\bar{F}}\\ \dim B >0\\x + B\subset  X}} x + B,
            \]
            where \(B\subset  A_{\bar{F}}\) denotes an abelian subvariety of \(A_{\bar{F}}\). Since \(X\) is defined over \(F\), if a geometric coset \(x+B\) is contained in \(X\) then any Galois conjugate of \(x + B\) is also contained in \(X\). Hence, the Ueno locus of \(X\) is  defined over \(F\) as well.
            The Ueno locus is equivariant under translation, i.e., \(\Ueno(X+a)=\Ueno(X)+a\) for any \(a\in A\). If \(X\) is a translate of an abelian variety then the Ueno locus of \(X\) is all of \(X\). 
            In general, results of Ueno~\cite{Ueno}*{Theorem 3.10} and Kawamata~\cite{Kawamata}*{Theorem 4} together imply that the Ueno locus is Zariski closed (see~\cite{Lang-NTIII}*{Chapter 1, Section 6} for more details on how this implication follows from Ueno and Kawamata's work).
            
            \begin{defn}\label{def:Ueno}
            Let \(C\) be a curve. We say that a closed point \(x\in C\) is \defi{Ueno} if its divisor class \([x]\) lies in the Ueno locus of \( W_C^{\deg(x)}\subset  \Pic^{\deg(x)}_C\), and \defi{non-Ueno} otherwise.  
            \end{defn}
             Thus, every closed point of degree at least~\(g\) on a curve of genus \(g\) is Ueno.
            
            Since any positive rank abelian variety is also positive dimensional, the definitions of AV-parameterized points and the Ueno locus immediately imply the following.
            \begin{lemma}
                Let \(x\in C\) be a closed point.  If \(x\) is AV-parameterized then \(x\) is Ueno. \qed
            \end{lemma}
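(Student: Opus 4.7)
The plan is simply to unwind the two definitions side-by-side and observe that the condition in Definition~\ref{def:AVparam} is literally a special case of the condition defining the Ueno locus in Definition~\ref{def:Ueno}. Suppose \(x \in C\) is an AV-parameterized closed point of degree \(d\). By Definition~\ref{def:AVparam}, there exists a positive rank abelian subvariety \(A \subset \Pic^0_C\) such that \([x] + A \subset W_C^d\). Identifying \(\Pic^d_C\) with \(\Pic^0_C\) via translation by \([x]\) (which is permissible since \([x] \in \Pic^d_C(k)\)), we may regard \(W_C^d\) as a subvariety of an abelian variety and \(A\) as an abelian subvariety.

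The one small point to check is that a positive rank abelian subvariety is automatically positive-dimensional, so that the translate \([x] + A\) qualifies as a positive-dimensional coset. This is immediate: the only \(0\)-dimensional abelian variety is the trivial one, whose Mordell–Weil group has rank \(0\). Therefore \(\dim A \geq 1\), and \([x] + A\) is a positive-dimensional coset of an abelian subvariety contained in \(W_C^d\) and passing through \([x]\).

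By the definition of the Ueno locus recalled above, this places \([x] \in \Ueno(W_C^d)\), which is exactly the statement that \(x\) is Ueno per Definition~\ref{def:Ueno}. There is no real obstacle here; the lemma is essentially a tautological consequence of the definitions once one notes the implication \emph{positive rank} \(\Rightarrow\) \emph{positive dimension}, and accordingly the proof should be stated in one or two sentences.
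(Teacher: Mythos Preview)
Your proof is correct and takes essentially the same approach as the paper: the paper simply remarks ``since any positive rank abelian variety is also positive dimensional, the definitions of AV-parameterized points and the Ueno locus immediately imply the following'' and closes with a \qed. Your write-up just unpacks this one sentence, and even your suggestion that ``the proof should be stated in one or two sentences'' matches what the paper does.
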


            However, the converse does not hold.  There can exist Ueno AV-isolated points, e.g., any degree at least \(g\) point on a genus \(g\) curve whose Jacobian has rank \(0\). The following example shows that Ueno AV-isolated points can also have degree less than the genus.
            \begin{example}
                Consider the genus \(3\) curve \(C\) defined by \(y^2 = t^8 + 8t^6 + 22t^4 + 25t^2 + 10\).  We have the following degree \(2\) morphisms:
                \[
                \begin{tikzcd}
                & C
                \arrow[rd, "{(t, y)\mapsto (t^2, y)}"] 
                \arrow[ld, "{(t,y)\mapsto t}", swap] & \\
                \PP^1 & & E\colon y^2 = x^4 +8x^3 + 22x^2 + 25x + 10
                \end{tikzcd}
                \]
                The genus \(1\) curve \(E\) has exactly \(6\) rational points,\footnote{This statement and all other computational claims in this example are verified in \texttt{Magma} code~\cite{magma} available on \texttt{Github}~\cite{VV-code}.} including \((-3/2, \pm 1/4)\).  The preimages of \((-3/2, \pm 1/4)\) are quadratic points on \(C\) that map to quadratic points on \(\PP^1\).  
                The Jacobian of \(C\) has rank \(0\), so every closed point is AV-isolated. In particular, the quadratic points on \(C\) that map to \((-3/2, \pm 1/4)\in E\) are isolated.  However, the map \(C\to E\) gives a morphism \(E\to \Pic^2_C\), and by definition, the fibers of \(C\to E\) are contained in the image of \(E\).  Therefore, these quadratic points are Ueno AV-isolated points.\hfill\(\righthalfcup\)
            \end{example} 
            Ueno AV-isolated points should be thought of as fundamentally different from either non-Ueno AV-isolated points or (Ueno) AV-parameterized points, since they are not stable under base extension. For example, an Ueno AV-isolated point \(x\) corresponding to a rank \(0\) positive-dimensional abelian subvariety \(B \subset  \Pic^0_C\) will become (Ueno) AV-parameterized after a finite extension \(k'/k\) that is linearly disjoint from \(\kk(P)\) and for which \(\rk \Pic^0_C(k') > 0\).  On the other hand, a non-Ueno AV-isolated point \(x\) remains non-Ueno AV-isolated under any finite extension linearly disjoint from \(\kk(x)\). In the figure below, the grey polka-dot semicircle region represents the AV-isolated points that will become AV-parameterized after a finite extension.
\begin{figure}[hbt!]
                \centering
                \includegraphics[width=0.4\linewidth]{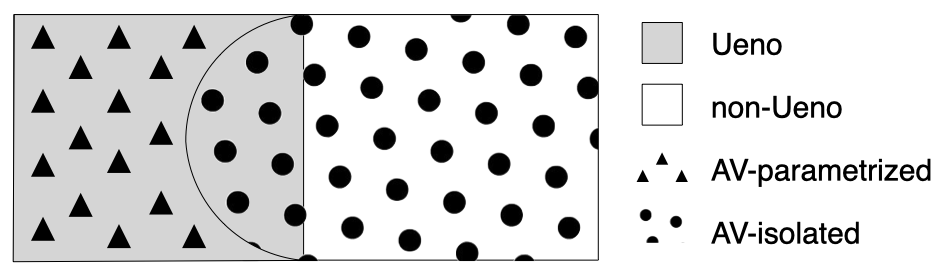}
            \end{figure}

\subsection{Finiteness of isolated points}\label{sec:Dichotomy}

	\begin{defn}
		Let \(C\) be a nice curve over a number field \(k\).  
		
		A closed point \(x\in C\) is \defi{parameterized} if it is \(\PP^1\)-parameterized \emph{or} AV-parameterized.

		A closed point \(x\in C\) is \defi{isolated} if it is both \(\PP^1\)-isolated \emph{and} AV-isolated.
	\end{defn}
	\begin{remark}Note that parameterized and isolated are properties of \emph{algebraic} points, and curves of any genus always have infinitely many algebraic points.
	\end{remark}

	The definition of parameterized points provides additional structure (albeit not necessarily in an explicit way) that we can use to study these points.  Indeed, a \(\PP^1\)-parameterized point of degree~\(d\) is witnessed by a degree~\(d\) morphism to \(\PP^1\).  An AV-parameterized degree~\(d\) point \(x\) is witnessed by an abelian subvariety \(A\subset  \Pic^0_C\) of positive rank such that the translate \(x + A\) is contained in \(\im (\Sym^d_C\to \Pic^d_C)\).  Hence, the parameterized points should be thought of as the ones with a \emph{geometric} reason for existence.  The isolated points are mysterious -- there is no good geometric reason for their existence.  
    	\begin{theorem}[\cite{Faltings-GeneralLang}\(+\varepsilon\); \cite{BELOV}*{Theorem 4.2}]\label{thm:InfinitudeDegreed}
		Let \(C\) be a nice curve over a number field. 
        \begin{enumerate}
            \item There are finitely many isolated points on \(C\) (regardless of degree).\label{part:IsolatedFinite}
            \item There are infinitely many degree~\(d\) points if and only if there exists a degree~\(d\) parameterized point.
        \end{enumerate}
	\end{theorem}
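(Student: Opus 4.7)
The plan is to deduce both parts from the finiteness properties of AV-isolated points furnished by Faltings's Subvarieties of AV Theorem, combined with the Riemann--Roch based criterion for $\PP^1$-isolated points.

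For (1), I would first observe that Corollary~\ref{cor:HighDegreeP1Param} forces every isolated point to have degree at most $g$, so it suffices to bound the number of isolated points of each degree $d \in \{1, \dots, g\}$. Fix such a $d$. On the one hand, Corollary~\ref{cor:FiniteImAVisol} produces a finite subset of $\Pic^d_C(k)$ containing the image of every degree $d$ AV-isolated point under the Abel--Jacobi map $\Sym^d_C \to \Pic^d_C$. On the other hand, Corollary~\ref{cor:BijP1isolated} shows that the Abel--Jacobi map is injective on the set of degree $d$ $\PP^1$-isolated points. Since ``isolated'' requires being both $\PP^1$-isolated \emph{and} AV-isolated, the degree $d$ isolated points inject into a finite set, giving finitely many in each degree, and summing over $d \in \{1, \dots, g\}$ finishes the proof of (1).

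For (2), the forward direction splits into two cases. If a parameterized degree $d$ point $x$ is $\PP^1$-parameterized, then Lemma~\ref{lem:bpfGivesP1param} immediately yields infinitely many $\PP^1$-parameterized degree $d$ points; if instead $x$ is $\PP^1$-isolated but AV-parameterized, then Proposition~\ref{prop:P1isolAVparam} produces a coset of a finite index subgroup of a positive rank $A(k)$ whose every element is represented by a degree $d$ point, which is in particular an infinite set. Conversely, if there are infinitely many degree $d$ points, then part (1) asserts that only finitely many points on $C$ are isolated (regardless of degree), so all but finitely many (in particular at least one) of the degree $d$ points must be parameterized.

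The genuinely hard inputs here are Corollary~\ref{cor:FiniteImAVisol} and Proposition~\ref{prop:P1isolAVparam}, both of which depend essentially on Faltings's Subvarieties of AV Theorem (and, in the case of Proposition~\ref{prop:P1isolAVparam}, on the delicate coset argument of Lemma~\ref{lem:CorOfFGAG}); I would use these as black boxes. The present theorem is then a clean synthesis, and the one point requiring care is that ``isolated'' means isolated \emph{in both senses} simultaneously, so that the two independent finiteness mechanisms in (1) can be intersected to produce the required bound.
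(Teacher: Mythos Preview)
Your proof is correct and matches the paper's approach exactly: part~(1) via Corollaries~\ref{cor:HighDegreeP1Param}, \ref{cor:BijP1isolated}, and~\ref{cor:FiniteImAVisol}, and part~(2) via Lemma~\ref{lem:bpfGivesP1param} and Proposition~\ref{prop:P1isolAVparam} for one implication and part~(1) for the other. The only (inconsequential) discrepancy is that you label the implications ``forward'' and ``converse'' opposite to the paper's convention.
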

    \begin{proof}\hfill
    \begin{enumerate}
        \item This follows from Corollaries~\ref{cor:HighDegreeP1Param}, \ref{cor:BijP1isolated}, and~\ref{cor:FiniteImAVisol}.
        \item The forwards direction follows from  part~\eqref{part:IsolatedFinite}.
		The backwards direction follows from Lemma~\ref{lem:bpfGivesP1param}
  and Proposition~\ref{prop:P1isolAVparam}.\qedhere
        \end{enumerate}
    \end{proof}

	Theorem~\ref{thm:InfinitudeDegreed}\eqref{part:IsolatedFinite} has a particularly nice consequence for curves \(C/k\)  whose Jacobian has rank \(0\).  In this case, \(C\) has no AV-parameterized points, so, by Theorem~\ref{thm:InfinitudeDegreed}\eqref{part:IsolatedFinite} all but finitely many closed points of \(C\) are \(\PP^1\)-parameterized. The definition of \(\PP^1\)-parameterized points then gives:
	\begin{cor}[Specialization of Theorem~\ref{thm:InfinitudeDegreed}\eqref{part:IsolatedFinite}]
		Let \(C\) be a curve over a number field \(k\) whose Jacobian has rank \(0\). Then there exists a proper closed subscheme \(Z\subsetneq C\) such that \emph{all closed points} \(x\in C\setminus Z\) appear as a fiber of some morphism to \(\PP^1\).
	\end{cor}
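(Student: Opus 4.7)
The plan is to combine the finiteness of isolated points from Theorem~\ref{thm:InfinitudeDegreed}\eqref{part:IsolatedFinite} with the observation that the rank~\(0\) hypothesis rules out all AV-parameterized points, so outside a finite set every closed point must be \(\PP^1\)-parameterized, hence a fiber of a morphism to \(\PP^1\).

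First I would verify the key input: if \(\rk\Pic^0_C(k) = 0\), then no closed point of \(C\) is AV-parameterized. By Definition~\ref{def:AVparam}, an AV-parameterized point requires a \emph{positive rank} abelian subvariety \(A\subset \Pic^0_C\) with \([x] + A\subset W_C^{\deg(x)}\). However, \(A(k)\) is a subgroup of the finitely generated abelian group \(\Pic^0_C(k)\), so if the ambient group has rank~\(0\), then \(A(k)\) is torsion and hence \(A\) has rank~\(0\). Thus no such subvariety exists, and the AV-parameterized locus is empty.

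Next, I would apply Theorem~\ref{thm:InfinitudeDegreed}\eqref{part:IsolatedFinite} to conclude that the set \(S\) of isolated points on \(C\) is finite. By the previous paragraph, a closed point is isolated if and only if it is \(\PP^1\)-isolated. Consequently, every closed point of \(C\) outside the finite set \(S\) is \(\PP^1\)-parameterized. By Definition~\ref{def:P1param}, for each such point \(x\) there is a morphism \(\pi\colon C\to \PP^1\) of degree~\(\deg(x)\) with \(\pi(x)\in \PP^1(k)\); that is, \(x\) appears as (a component of) the fiber \(\pi^{-1}(\pi(x))\). Since \(\deg \pi = \deg x\), this fiber is exactly \(x\) scheme-theoretically (if we additionally want set-theoretic equality we can choose \(\pi\) to be unramified at \(x\), but the statement of the corollary only requires that \(x\) \emph{appears as a fiber}).

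Finally, I would let \(Z\) be the scheme-theoretic union of the finitely many closed points in \(S\). This is a \(0\)-dimensional closed subscheme of \(C\), and since \(C\) has infinitely many closed points (e.g., by Corollary~\ref{cor:HighDegreeP1Param} there are \(\PP^1\)-parameterized points of every sufficiently large degree), we have \(Z\subsetneq C\). By construction every closed point of \(C\setminus Z\) is \(\PP^1\)-parameterized and thus appears as a fiber of some morphism \(C\to \PP^1\). There is no real obstacle here beyond correctly invoking the definitions; the substantive mathematical content has already been packaged into Theorem~\ref{thm:InfinitudeDegreed}\eqref{part:IsolatedFinite} (which rests on Faltings's theorems and Riemann--Roch).
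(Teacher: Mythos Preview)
Your argument is correct and follows the same route as the paper: the rank~\(0\) hypothesis kills all AV-parameterized points, so Theorem~\ref{thm:InfinitudeDegreed}\eqref{part:IsolatedFinite} forces all but finitely many closed points to be \(\PP^1\)-parameterized, and Definition~\ref{def:P1param} then realizes each such point as a fiber. The only unnecessary wrinkle is your parenthetical about unramifiedness: since \(\deg\pi=\deg x\) and \(\pi(x)\in\PP^1(k)\), the fiber \(\pi^{-1}(\pi(x))\) already equals \(x\) as a scheme, so there is nothing further to arrange.
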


\section{The density degree set
for curves over number fields}\label{sec:DensityDegrees}

Since a proper Zariski closed subset of a curve consists of a \textit{finite} union of closed points, Theorem~\ref{thm:InfinitudeDegreed} connects the general notion of the density degree set \(\dendegs(C/k)\) of a curve \(C\) defined over a number field \(k\) with degrees of parameterized points.

\begin{cor}[Corollary of Theorem~\ref{thm:InfinitudeDegreed}]\label{cor:dendeg_parameterized}
Let \(C\) be a nice curve over a number field \(k\).  An integer \(d\) is in \(\dendegs(C/k)\) if and only if there exists a parameterized point of degree~\(d\).
\end{cor}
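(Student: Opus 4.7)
The plan is to reduce the Zariski density statement to an infinitude statement using the fact that \(C\) is one-dimensional, and then invoke Theorem~\ref{thm:InfinitudeDegreed}\eqref{part:IsolatedFinite} (or rather part~(2) of that theorem).

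First, I would observe that since \(C\) is a nice curve (in particular, geometrically integral of dimension~\(1\)), every proper Zariski closed subset of \(C\) is zero-dimensional, and hence consists of a finite union of closed points. Consequently, a set of closed points on \(C\) is Zariski dense if and only if it is infinite. Applied to the set of degree~\(d\) closed points, this gives the equivalence
\[
d \in \dendegs(C/k) \iff \#\{x \in C : \deg(x) = d\} = \infty.
\]

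Next, I would invoke Theorem~\ref{thm:InfinitudeDegreed}(2), which states precisely that there are infinitely many degree~\(d\) points on \(C\) if and only if there exists a parameterized point of degree~\(d\) on \(C\). Combining the two equivalences yields the corollary. There is no real obstacle here; the result is essentially a translation between the language of density and the language of infinitude, made possible by the one-dimensionality of \(C\), followed by direct application of the dichotomy already established in Section~\ref{sec:Dichotomy}.
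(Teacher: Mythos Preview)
Your proposal is correct and matches the paper's approach exactly: the paper observes (in the sentence introducing the corollary) that a proper Zariski closed subset of a curve is a finite union of closed points, so Zariski density reduces to infinitude, and then Theorem~\ref{thm:InfinitudeDegreed}(2) applies directly.
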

Thus, Corollary~\ref{cor:dendeg_parameterized} implies that \(\dendegs(C/k)\) is the union of the following two sets.

\begin{defn}\hfill
    \begin{enumerate}
        \item \(\dendegs_{\PP^1}(C/k) \: \colonequals \{\deg(x) :\text{\(x\) is a \(\PP^1\)-parameterized point}\}\).\footnote{By Lemma~\ref{lem:P1paramDefineorem}, \(\dendegs_{\PP^1}(C/k)\) is the same as the L\"uroth semigroup of \(C/k\).}
        \item \(\dendegs_{\AV}(C/k) \colonequals \{\deg(x) :\text{\(x\) is an AV-parameterized point}\}\).
    \end{enumerate}
\end{defn}

In this section, we determine various constraints on the structure of these sets.

\subsection{Asymptotics of the density degree set}
When the degree is large enough, the behavior of the degrees of \(\PP^1\)-parameterized points (and thus the integers in the density degree set) is completely determined by the index of the curve.

\begin{prop}\label{prop:DenDegLarge}
Let \(C\) be a nice curve over a number field \(k\) and let \(g\) denote the genus of \(C\).
If  \(d \geq \max(2g, 1)\), then the following are equivalent:
\begin{enumerate}
\item\label{gonality} \(d \in \dendegs_{\PP^1}(C/k)\),
\item\label{density} \(d \in \dendegs(C/k)\),
\item\label{degreeset} \(d \in \calD(C/k)\).
\item\label{index} \(d\) is a multiple of the index of \(C\),
\end{enumerate}
\end{prop}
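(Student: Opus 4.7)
The plan is to prove \eqref{gonality}$\Rightarrow$\eqref{density}$\Rightarrow$\eqref{degreeset}$\Rightarrow$\eqref{index} by inspection and then establish the substantive implication \eqref{index}$\Rightarrow$\eqref{gonality} via Riemann--Roch. The containments $\dendegs_{\PP^1}(C/k)\subset \dendegs(C/k)\subset \calD(C/k)$ are immediate, and \eqref{degreeset}$\Rightarrow$\eqref{index} follows directly from the definition of the index as $\gcd(\calD(C/k))$.

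For the reverse direction, assume $d\geq \max(2g,1)$ is a multiple of $\ind(C/k)$. The first step is to exhibit an honest $k$-rational divisor of degree exactly $d$: since $\ind(C/k)=\gcd(\calD(C/k))$, the B\'ezout identity yields closed points $x_1,\ldots,x_r\in C$ and integers $n_1,\ldots,n_r\in \Z$ such that $D\colonequals \sum_i n_i x_i$ has degree $d$. By construction, $D$ is $k$-rational, though not necessarily effective. The second step uses Riemann--Roch to upgrade $D$ to an effective representative with basepoint-free linear system. Since $d\geq 2g-1$, Riemann--Roch gives $h^0(D)=d+1-g$, and the hypothesis $d\geq\max(2g,1)$ ensures $h^0(D)\geq 2$. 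Because $H^0(C,\mathcal{O}(D))$ is a $k$-vector space of positive dimension, any nonzero $k$-rational section produces an effective $k$-rational divisor $E$ linearly equivalent to $D$; and for any geometric point $p$, the inequality $d-1\geq 2g-1$ gives $h^0(E-p)=d-g=h^0(E)-1$ by Riemann--Roch, so $p$ is not a basepoint of $|E|$. Lemma~\ref{lem:bpfGivesP1param} then produces a degree $d$ $\PP^1$-parameterized point, so $d\in \dendegs_{\PP^1}(C/k)$.

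The only step requiring real care is the first: translating the hypothesis that $d$ is a multiple of $\ind(C/k)$ into the existence of a genuine $k$-rational divisor rather than merely a class in $\Pic_C(k)$ (whose lift to $\Pic(C)$ may be obstructed by the Brauer group). Forming the divisor directly from closed points via B\'ezout sidesteps this issue, after which Riemann--Roch in the range $d\geq 2g$ delivers basepoint-freeness automatically and the result follows.
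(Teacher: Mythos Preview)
Your proof is correct and follows essentially the same approach as the paper: both establish the easy implications directly, then for \eqref{index}$\Rightarrow$\eqref{gonality} construct a $k$-rational divisor of degree $d$ as an integer combination of closed points, invoke Riemann--Roch in the range $d\geq 2g$ to obtain $h^0\geq 2$ and basepoint-freeness, and conclude via Lemma~\ref{lem:bpfGivesP1param}. Your version is somewhat more explicit about the passage to an effective representative and the pointwise basepoint-freeness check, but the argument is the same.
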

\begin{proof}
The implication \eqref{gonality} \(\Rightarrow\) \eqref{density} is Corollary~\ref{cor:dendeg_parameterized}. The implication \eqref{density} \(\Rightarrow\) \eqref{degreeset} is immediate. The implication \eqref{degreeset} \(\Rightarrow\) \eqref{index} follows from the definition \(\ind(C/k) \colonequals \gcd\calD(C/k)\).

It remains to prove \eqref{index} \(\Rightarrow\)\eqref{gonality}. If \(d\) is a multiple of the index, then there exists a linear combination of closed points, i.e., a \(k\)-rational divisor \(D\), of degree~\(d\). Since \(d \geq \max(2g,1)\), the Riemann--Roch Theorem implies that \(h^0(D) \geq \max(g+1, 2) \geq 2\) and that \(|D|\) is basepoint free. Hence, \(C\) has a \(\PP^1\)-parameterized point of degree~\(d\) by Lemma~\ref{lem:bpfGivesP1param}.  
\end{proof}

\begin{remark}The proof of Proposition~\ref{prop:DenDegLarge} crucially uses Hilbert's irreducibility theorem, and hence that the ground field is Hilbertian.  If, instead, the ground field were Henselian, counterexamples to the analogous statement are given in \cite{Creutz--Viray}.
\end{remark}

The assumption \(d \geq \max(2g, 1)\) in Proposition~\ref{prop:DenDegLarge} cannot be weakened to \(d \geq g+1\), which merely guarantees that a linear system has dimension at least~\(1\), without  basepoint freeness.  Compare this to Corollary~\ref{cor:HighDegreeP1Param}, where the fact that \(x\) is a closed point guarantees basepoint freeness.  The following example illustrates this, as well as the fact that \(\Sym^d_C(k)\) being infinite is not sufficient to guarantee that \(d \in \dendegs(C/k)\).

\begin{example}
Let \(C\) be a curve of genus \(2\) for which \(C(k) =\{p\}\) and \(\Pic^0_C(k) = \{\OO_C\}\) (in particular, \(\Pic^0_C\) has rank \(0\) over~\(k\)).  Since \(C(k) \neq \emptyset\), we have \(1 \in \degset(C/k)\); on the other hand, \(1 \notin \dendegs(C/k)\) by Faltings's Curve Theorem.  The complete linear system of the canonical divisor (class) \(K_C\) gives a degree~\(2\) map \(C \to \PP^1_k\), and hence \(2 \in \dendegs(C/k)\).  Since \(\Pic^0_C(k) = \{\OO_C\}\), the only \(k\)-point on \(\Pic^3_C\) is \(K_C + [p]\).  Even though \(h^0(K_C +[p]) = 2\), there is no degree~\(3\) map  \(C \to \PP^1_k\): since \(h^0(K_C) = 2\), the point \(p\) is a basepoint of this linear system.  Hence \(3 \notin \degset(C/k)\) (even though \(3\geq 2 + 1\)), and consequently \(3 \not\in \dendegs(C/k)\).  Combining this with Proposition~\ref{prop:DenDegLarge}, we see that \(\degset(C/k) =  \Z_{>0} \smallsetminus \{3\}\) and \(\dendegs(C/k) = \Z_{>0} \smallsetminus \{1, 3\}\).
Examples of such curves over~\(\Q\) can be found by the following \href{https://www.lmfdb.org/Genus2Curve/Q/?num_rat_pts=1&torsion\_order=1&analytic_rank=0}{search}~ \cite{lmfdb}. \hfill\(\righthalfcup\)
\end{example}

The degrees of AV-parameterized points also have a uniform behavior when \(d\) is large.

\begin{lemma}\label{lem:AVDensityDegreeLarge}
    Let \(C\) be a nice curve over a number field \(k\).  If \(d \geq g \geq 1\), then the following are equivalent:
    \begin{enumerate}
        \item \(d \in \dendegs_{\AV}(C/k)\),
        \item \(d \in \calD(C/k)\) and \(\rk(\Pic^0_C(k)) > 0\).
    \end{enumerate}
\end{lemma}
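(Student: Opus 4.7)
The plan is to prove both directions essentially from the definitions, leveraging the consequence of the Riemann--Roch Theorem (recalled in Section~\ref{sec:AJ}) that $W^d_C = \Pic^d_C$ whenever $d \geq g$. Neither direction requires a subtle argument; the hypothesis $d \geq g$ does all the geometric work by making every translate of $\Pic^0_C$ automatically lie inside $W^d_C$.

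For the implication $(1) \Rightarrow (2)$, suppose there is an AV-parameterized point $x$ of degree $d$. Its degree immediately witnesses $d \in \calD(C/k)$. By Definition~\ref{def:AVparam}, there exists a positive rank abelian subvariety $A \subset \Pic^0_C$; in particular $\rk(A(k)) > 0$, and since $A(k) < \Pic^0_C(k)$, we conclude $\rk(\Pic^0_C(k)) > 0$.

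For the implication $(2) \Rightarrow (1)$, I would take a degree $d$ point $x$ (provided by $d \in \calD(C/k)$) and produce an AV-parameterization with the largest possible choice $A = \Pic^0_C$. The hypothesis $\rk(\Pic^0_C(k)) > 0$ makes $\Pic^0_C$ a positive rank abelian subvariety of itself, so I only need to verify $[x] + \Pic^0_C \subset W^d_C$. Since $d \geq g$, the Riemann--Roch identification $W^d_C = \Pic^d_C$ gives $[x] + \Pic^0_C = \Pic^d_C = W^d_C$. Hence $x$ is AV-parameterized and $d \in \dendegs_{\AV}(C/k)$.

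There is no real obstacle here: the entire content is encapsulated by the observation that once $d \geq g$, the variety $W^d_C$ is \emph{all} of $\Pic^d_C$, so any degree $d$ point is trivially translated along any positive-rank abelian subvariety of $\Pic^0_C$ while staying in $W^d_C$. The role of the hypothesis $g \geq 1$ is merely to ensure that $\Pic^0_C$ is a nontrivial abelian variety so that the notion of positive rank is meaningful.
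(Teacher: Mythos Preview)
Your proof is correct and follows the same approach as the paper: both rely on the Riemann--Roch identification \(W^d_C = \Pic^d_C\) for \(d \geq g\) to show that any degree \(d\) point is AV-parameterized via \(A = \Pic^0_C\). Your write-up is simply more explicit about the easy direction \((1)\Rightarrow(2)\), which the paper leaves implicit.
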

\begin{proof}
    Since \(W_C^d = \Pic^d_C\) for all \(d \geq g\) by the Riemann--Roch theorem, once there exists a closed point \(x\) of degree~\(d \geq g\), we have \(x + \Pic^0_C \subset  W_C^d\).
\end{proof}

\begin{remark}\label{rem:AVsimple}
    If \(\Pic^0_C\) is a simple abelian variety, then there are no AV-parameterized points of degree~\(1 \leq d \leq g-1\), since \(\dim W_C^d = d < g\) in this range.  Hence the existence of AV-parameterized points is controlled exclusively by \(\calD(C/k)\) and \(\rk(\Pic^0_C(k))\) by Lemma~\ref{lem:AVDensityDegreeLarge}.
\end{remark}

\begin{remark}
    If \(\rk(\Pic^0_C(k)) = 0\), then every \(x \in C\) is AV-isolated and \(\dendegs_{AV}(C/k) = \emptyset\).
\end{remark}

\subsection{Multiplicative structure of the density degree set}
Since \(\dendegs_{\PP^1}(C/k)\) is a semigroup, it
is closed under multiplication by positive integers. 
The set \(\dendegs_{AV}(C/k)\), and hence \(\dendegs(C/k)\), also has this closure property.

\begin{prop}\label{prop:DensdegsClosedUnderScalarMult}
    Let \(C\) be a nice curve defined over a number field \(k\).  If \(n\geq2\) and \(x\in C\) is an AV-parameterized degree~\(d\) point, then there exists a \(\PP^1\)-parameterized degree~\(nd\) point \(z\) that is linearly equivalent to \(n[x]\).  This implies that 
    \[\mathbb{Z}_{\geq 2} \cdot \dendegs_{\AV}(C/k) \subset  \dendegs_{\AV}(C/k)\cap \dendegs_{\PP^1}(C/k),\]
    and so \(\dendegs(C/k)\) is closed under multiplication by \(\mathbb{Z}_{\geq1}\).
\end{prop}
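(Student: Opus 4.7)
My plan is to reduce the statement to showing that the complete linear system \(|n[x]|\) is basepoint-free, after which Hilbert's irreducibility theorem produces the desired integral divisor. First I would use the AV-parameterization to fix a positive-rank abelian subvariety \(A\subset\Pic^0_C\) with \([x]+A\subset W_C^d\). Over \(\kbar\), on the dense open locus of \(A\) where \(\hh^0([x]+a)=1\), let \(D_a\) denote the unique effective degree-\(d\) divisor representing \([x]+a\). For any tuple \((a_1,\dots,a_n)\in A^n\) with \(\sum_i a_i=0\), the effective divisor \(\sum_i D_{a_i}\) has class \(n[x]\) and hence lies in \(|n[x]|\); this family of sums, together with \(nx\), will be the source of every divisor in \(|n[x]|\) that I use.

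To verify \(|n[x]|\) is basepoint-free, I would consider an arbitrary \(P\in C(\kbar)\). If \(P\notin\supp(x)\), the divisor \(nx\in|n[x]|\) itself avoids \(P\). If \(P\in\supp(x)\), set \(Z_P:=\{a\in A:P\in\supp(D_a)\}\), a closed subvariety of \(A\). Whenever \(Z_P\subsetneq A\) the open complement is nonempty, and a short surjectivity argument on the summation map \(A^{n-1}\to A\) yields a tuple \((a_i)\in(A\smallsetminus Z_P)^n\) with \(\sum_i a_i=0\), giving a divisor \(\sum_i D_{a_i}\in|n[x]|\) whose support avoids \(P\).

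The main obstacle will be ruling out \(Z_P=A\). Since \(A\) is defined over \(k\) and the absolute Galois group acts transitively on \(\supp(x)=\{P_1,\dots,P_d\}\), the condition \(Z_P=A\) either holds for every \(P\in\supp(x)\) or for none. Assuming the former for contradiction, I would define iteratively \(D_a^{(k)}:=D_a-P_1-\dots-P_k\). Since the \(P_i\) are distinct and each appears with multiplicity at least one in \(D_a\), subtracting \(P_1,\dots,P_j\) does not affect the multiplicity at \(P_{j+1}\), so \(D_a^{(k)}\) remains effective of degree \(d-k\) at every stage. At \(k=d\) the divisor \(D_a^{(d)}\) is effective of degree zero, hence the zero divisor, while its class equals \([D_a]-\sum_i[P_i]=([x]+a)-[x]=a\). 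This forces \(a=0\) on a dense open of \(A\), contradicting \(\dim A>0\).

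With \(|n[x]|\) basepoint-free and \(n[x]\neq 0\), we have \(\hh^0(n[x])\geq 2\), and the complete linear system defines a nondegenerate morphism \(\phi\colon C\to\PP^r\) with \(r\geq 1\). Applying Lemma~\ref{lem:projection} when \(r\geq 2\), I obtain a morphism \(\pi\colon C\to\PP^1\) of degree \(nd\) whose fibers all lie in \(|n[x]|\); Proposition~\ref{prop:hilbert-curve} then furnishes a Zariski-dense set of \(t\in\PP^1(k)\) with \(\pi^{-1}(t)\) a degree-\(nd\) integral closed point \(z\) of class \(n[x]\), necessarily \(\PP^1\)-parameterized. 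Because \(n\colon A\to A\) is surjective, \(n[x]+A=n[x]+nA\subset W_C^{nd}\), so \(z\) is AV-parameterized as well, giving the first containment. Closure of \(\dendegs(C/k)\) under multiplication by \(\mathbb{Z}_{\geq 1}\) then follows by combining this with the fact that \(\dendegs_{\PP^1}(C/k)\) is the L\"uroth semigroup, which is already closed under addition.
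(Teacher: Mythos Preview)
Your argument is essentially correct but contains one gap: you assert that the locus \(U=\{a\in A:\hh^0([x]+a)=1\}\) is dense open, but it need not be nonempty (for instance whenever \(d>g\), or more generally whenever \(\hh^0([x]+a)\geq 2\) for every \(a\)). When \(U=\emptyset\) your \(D_a\) is undefined and the construction stalls. The fix is immediate: if \(U=\emptyset\) then in particular \(\hh^0([x])\geq 2\), so by Lemma~\ref{lem:P1paramDefineorem} the system \(|x|\) is already basepoint-free, whence so is \(|nx|\), and you may skip directly to your final paragraph. With that case handled, the remainder of your proof is sound; your Galois-symmetry-plus-degree-count argument showing that \(Z_P\) cannot exhaust \(U\) is correct and rather clean. (A small notational point: \(Z_P\) as you define it is closed in \(U\), not in \(A\); what you really want is that its closure \(Z_P'=\{a\in A:[x]+a-[P]\in W_C^{d-1}\}\) is a proper subvariety of \(A\).)

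The paper reaches the same intermediate goal (basepoint-freeness of \(|nx|\)) by a different route: it stays over \(k\), invoking Lemma~\ref{lem:FiniteIndexEffective} to produce \(k\)-rational effective representatives \(y_i\) of \([x]+iD_0\) for a single infinite-order point \(D_0\in A(k)\), and then exhibits the explicit divisors \((n-1)y_i+y_{i-in}\in|nx|\). Your approach works over \(\kbar\) and exploits the full geometry of \(A\) together with the transitive Galois action on \(\supp(x)\); it avoids the Brauer-group input underlying Lemma~\ref{lem:FiniteIndexEffective} and makes the basepoint-freeness verification quite transparent, at the cost of the extra case split above.
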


\begin{proof}
    There exists a positive rank abelian variety \(A\subset  \Pic^0_C\) such that \(x + A\subset  W_C^d\).  
    Since all positive multiples of an effective divisor are also effective, we have \([nx] + [n]A \subset  W_C^{nd}\), where \([n]\) is the multiplication by \(n\) map on \(\Pic^0_C\).  Since \([n]A = A\), if we can show that there exists a closed point \(z \in |nx|\), then \(z + A \subset  W_C^{nd}\) and \(nd \in \dendegs_{\AV}(C/k)\).
    
    By Lemma~\ref{lem:FiniteIndexEffective}, there exists a finite index subgroup \(H<  A(k)\) such that every divisor in \(x + H\) is in the image of \(\Sym^d_C(k)\).  Let \(D_0\in H<  A(k)\) be a point of infinite order. Then for all integers \(i\), the divisor class \(x + iD_0\) is represented by some degree~\(d\) \(k\)-rational effective divisor \(y_i\) (we will take \(y_0 = x\)).  Note that since \(D_0\) has infinite order \(y_i\not\sim y_j\) if \(i\neq j\).  Then for any integer \(n\) and any integer \(i\), we have
    \[
        (n-1)y_i + y_{i - in} \;\sim \; (n-1)(x + iD_0) + (x + (i - in)D_0) \; \sim \; nx = ny_0.
    \]
    Thus, for \(n \geq 2\), \([nx]\) has infinitely many effective representatives with disjoint support and so \(\hh^0([nx])\geq 2\) and \([nx]\) is basepoint free.  Hence, by Lemma~\ref{lem:bpfGivesP1param},
    there exists a closed point of degree~\(nd\) in \(|nx|\) and we conclude \(nd \in \dendegs_{\PP^1}(C/k) \cap \dendegs_{AV}(C/k)\).
\end{proof}

The \(n=2\) case of Proposition~\ref{prop:DensdegsClosedUnderScalarMult}, which was proved by Abramovich--Harris and Frey, independently, implies that geometric properties (e.g., the gonality) can prohibit the existence of a parameterized point, giving another example where geometry controls arithmetic.

\begin{cor}[\cites{AH-Degree3And4,Frey}]\label{cor:GeometricConsequences}
    Let \(C\) be a nice curve over a number field \(k\).  If \(d \in \dendegs_{\AV}(C/k)\), then \(\gon(C) \leq 2d\).
\end{cor}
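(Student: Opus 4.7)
The plan is to deduce this as an immediate consequence of Proposition~\ref{prop:DensdegsClosedUnderScalarMult} with $n = 2$. Since $d \in \dendegs_{\AV}(C/k)$, by definition there exists an AV-parameterized degree~$d$ point $x \in C$. Applying Proposition~\ref{prop:DensdegsClosedUnderScalarMult} with $n = 2$, we obtain a $\PP^1$-parameterized closed point $z \in C$ of degree $2d$ (linearly equivalent to $2[x]$).

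Now I invoke Remark~\ref{rem:gon_P1param}, which identifies the gonality of $C/k$ with the minimum degree of a $\PP^1$-parameterized point. Concretely, the existence of a $\PP^1$-parameterized point of degree $2d$ gives, via the equivalence \eqref{cond:P1param}$\Leftrightarrow$\eqref{part:Degreedmorphism} in Lemma~\ref{lem:bpfGivesP1param}, an honest degree~$2d$ morphism $C \to \PP^1_k$, and the gonality is by definition the minimum degree of such a morphism. Therefore $\gon(C) \leq 2d$.

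There is essentially no obstacle here beyond invoking Proposition~\ref{prop:DensdegsClosedUnderScalarMult}: all of the work has already been done in producing the auxiliary $\PP^1$-parameterized point in degree $2d$, and the gonality bound then follows by definition. If one wished to make the argument self-contained without citing Remark~\ref{rem:gon_P1param}, one would simply recall that by Lemma~\ref{lem:P1paramDefineorem}\eqref{cond:P1param}, being $\PP^1$-parameterized means there is a morphism $\pi \colon C \to \PP^1$ of degree equal to $\deg(z) = 2d$, so $\gon(C) \leq \deg(\pi) = 2d$.
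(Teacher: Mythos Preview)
Your proof is correct and follows exactly the approach indicated by the paper: the text immediately preceding the corollary explains that it is the \(n=2\) case of Proposition~\ref{prop:DensdegsClosedUnderScalarMult}, and you have filled in precisely those details, invoking Remark~\ref{rem:gon_P1param} (or equivalently Lemma~\ref{lem:P1paramDefineorem}) to pass from the existence of a degree~\(2d\) \(\PP^1\)-parameterized point to the gonality bound.
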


\subsection{Additive structure of the density degree set}
 Given Proposition~\ref{prop:DensdegsClosedUnderScalarMult} and the fact that \(\dendegs_{\PP^1}(C/k)\) is a semigroup, one may ask whether  \(\dendegs(C/k)\) \textit{also} has the structure of an additive semigroup.
  The following example shows that this is not necessarily the case.

\begin{example}\label{ex:DenDegsNotSemigroup}

Let \(E/\Q\) be the elliptic curve defined by \(y^2 = x^3 -x + 1\) whose Mordell--Weil group is free of rank 1~\cite{lmfdb}*{\href{https://www.lmfdb.org/EllipticCurve/Q/92/a/1}{92.a1}}. Let \(\psi\colon \PP^1\to \PP^1\) be the degree~\(3\) map given by \([t_0:t_1]\mapsto [t_0^3 + 7t_0^2t_1 - 9t_0t_1^2 + 7t_1^3: t_0t_1(t_0 - t_1)]\) and let \(\varphi \colon E \to \PP^1\) be the double cover given by \((x,y) \mapsto [x:1]\).  Let \(C\) be the fiber product \(E\times_{\PP^1}\PP^1\) via \(\varphi\) and \(\psi\).  Then \(C\) is a smooth hyperelliptic curve of genus \(5\) that is a degree~\(3\) cover of \(E\).  In particular, \(C\) has a degree~\(2\) \(\PP^1\)-parameterized point and a degree~\(3\) AV-parameterized point. We claim that \(C\) has no degree~\(5\) parameterized points.  The key features for this proof are that 
\(C\) cannot admit a degree~\(3\) or \(5\) map to \(\PP^1\) by the adjunction formula (c.f.~Section~\ref{sec:single_source_key})
and that \(\Pic_C(\Q)/\Pic_E(\Q)\) is generated by rational Weierstrass points of \(C\).

Since there is no degree \(5\) map \(C\to \PP^1\), \(5\notin \delta_{\PP^1}(C/k)\) by Lemma~\ref{lem:bpfGivesP1param}. Thus it remains to prove that \(5\notin \delta_{AV}(C/k)\).  Since \(g=5\), we have \(W_C^5 = \Pic^5_C\), and since \(E\) has rank \(1\), the Jacobian of \(C\) has positive rank. Thus any point in \(\Pic^5_C(\Q)\) that is represented by an \emph{irreducible} divisor will be an AV-parameterized point.  We must therefore prove that \(5 \not\in \calD(C/\Q)\).  In other words, we must show that every point in \(\Pic^5_C(\Q)\) is represented by a \emph{reducible} divisor. (Since there are no \(\PP^1\)-parameterized points of degree~\(5\), every degree~\(5\) linear system has a base point. In particular, if there is a reducible representative of a degree~\(5\) divisor class, then every effective representative of this class will be reducible.)
 
First we determine \(\Pic^5_C(\Q)\). The degree~\(3\) morphism \(C\to E\) induces an embedding \(E\hookrightarrow \Sym^3_C\) where all the points in the image have disjoint support.  In addition, \(C\) has no degree~\(3\) map to \(\PP^1\) because its genus is too large, so composing with the degree~\(3\) Abel--Jacobi map and translating gives an embedding \(\Pic^0_E\hookrightarrow \Pic^0_C\).  A \texttt{Magma} \cite{magma} computation shows that the rank of \(\Pic^0_C(\Q)\) is at most \(1\)~\cite{VV-code}, so \(\left(\Pic^0_C/\Pic^0_E\right)(\Q)\) must be torsion.  This finite torsion group must embed into \(\left(\Pic^0_C/\Pic^0_E\right)(\F_p)\) for any prime \(p\) of good reduction.  By Lang's Theorem, this is isomorphic to  \(\Pic^0_C(\F_p)/\Pic^0_E(\F_p)\)~\cite{Lang-GroupTorsors}*{Theorem 2}, which for \(p=5\) and \(p=13\), has order \(2^3\cdot3\cdot29\) and \(2^2\cdot17\cdot 311\), respectively. Thus, \(\left(\Pic^0_C/\Pic^0_E\right)\) has order at most \(4\).  By construction, \(C\) has \(3\) rational Weierstrass points \(w_0,w_1,w_2\) (contained in \(\psi^{-1}(\infty)\)), which generate an order \(4\) torsion subgroup of \(\Pic^0_C\).  Since \(E\) has a free Mordell--Weil group, these Weierstrass points must generate \(\left(\Pic^0_C/\Pic^0_E\right)(\Q)\) and so \(\Pic^0_C(\Q)\simeq \Pic^0_E(\Q)\times \Z/2\times \Z/2\). 
In other words, \(W_C^5(\Q) = \Pic^5_C(\Q)\) consists of four translates of \(\psi^*\Pic^1_E\) by the following four degree~\(2\) combinations of Weierstrass points:
\[
   [2w_0] = [2w_1] = [2w_2], \;  [w_0 + w_1], 
   \; [w_0 + w_2], \; \textup{and}\; [w_1 + w_2].
\]
In particular, every divisor in these translates has a reducible effective representative.\hfill\(\righthalfcup\)
\end{example}

\subsection{The minimum density degree of a curve over a number field}

If all closed points on \(C\) of degree strictly less than \(\gon(C)\) are AV-isolated, for example, if \(\rk \Pic^0_C(k) = 0\), then 
\(\min(\dendegs(C/k)) = \gon(C)\).
Indeed, by definition, all points on \(C\) of degree strictly less than \(\gon(C)\) are \(\PP^1\)-isolated.  Hence, all points on \(C\) of degree strictly less than \(\gon(C)\) are isolated if and only if they are AV-isolated.  

However, in general, the minimum density degree of a curve over a number field  also accounts for the additional possibility of AV-parameterized points of small degree.  By Corollary~\ref{cor:GeometricConsequences} and Lemma~\ref{lem:bpfGivesP1param}, the minimum density degree is bounded:
\begin{equation}\label{gonbounds}
    \gon(C)/2 \leq \min(\dendegs(C/k)) \leq \gon(C).
\end{equation}
In particular, for \(g \neq 1\), since either \(|K_C|\) or \(|{-K_C}|\) is a basepoint free linear system of degree~\(|2g-2|\), we have the upper bound \(\min(\dendegs(C/k)) \leq |2g-2|\). (If \(g=1\), then \(\min(\dendegs(C/k)) =\ind(C/k)\), which is unbounded~\cite{Sharif-PeriodIndexNumberFields}*{Theorem 1.2}.)

For arbitrary curves,  the bounds in~\eqref{gonbounds} are known to be sharp~\cite{Smith--Vogt}*{Theorem 1.1}.
But in the case when \(\Pic^0_C\) is a simple abelian variety, there is only one case where \(\min(\dendegs(C/k))\) can differ from \(\gon(C)\).

\begin{cor}
    If \(\Pic^0_C\) is a simple abelian variety, then
    \[\min(\dendegs(C/k)) = \begin{cases}
        g & \textup{if }g \in \calD(C/k), \rk(\Pic^0_C(k)) > 0, \text{ and } \gon(C) \geq g, \\
        \gon(C) & \text{otherwise.}
    \end{cases}\]
\end{cor}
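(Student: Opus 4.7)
The plan is to use the decomposition $\dendegs(C/k) = \dendegs_{\PP^1}(C/k) \cup \dendegs_{AV}(C/k)$ from Corollary~\ref{cor:dendeg_parameterized}, together with three facts available in the simple-Jacobian setting: (i) $\min \dendegs_{\PP^1}(C/k) = \gon(C)$ by Remark~\ref{rem:gon_P1param}; (ii) every element of $\dendegs_{AV}(C/k)$ is at least $g$ by Remark~\ref{rem:AVsimple}; and (iii) for $d \geq g$, $d \in \dendegs_{AV}(C/k)$ if and only if $d \in \calD(C/k)$ and $\rk(\Pic^0_C(k)) > 0$ by Lemma~\ref{lem:AVDensityDegreeLarge}.

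First I would handle the first branch of the piecewise formula. Assuming $g \in \calD(C/k)$, $\rk(\Pic^0_C(k)) > 0$, and $\gon(C) \geq g$, fact~(iii) gives $g \in \dendegs_{AV}(C/k) \subset \dendegs(C/k)$. For $d < g$, (ii) excludes $d$ from $\dendegs_{AV}(C/k)$, and $d < g \leq \gon(C)$ combined with (i) excludes $d$ from $\dendegs_{\PP^1}(C/k)$. Hence $\min \dendegs(C/k) = g$.

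Next, in each of the three ``otherwise'' subcases I would show $\min \dendegs(C/k) = \gon(C)$. Since $\gon(C) \in \dendegs_{\PP^1}(C/k) \subset \dendegs(C/k)$, it suffices to rule out any $d < \gon(C)$ from $\dendegs(C/k)$; fact~(i) already handles $\dendegs_{\PP^1}(C/k)$, so the task is to show $d \notin \dendegs_{AV}(C/k)$. If $\rk(\Pic^0_C(k)) = 0$, then $\dendegs_{AV}(C/k) = \emptyset$. If $\gon(C) < g$, then $d < \gon(C) < g$ is excluded by (ii). The substantial subcase is $\rk(\Pic^0_C(k)) > 0$, $\gon(C) \geq g$, and $g \notin \calD(C/k)$: for $d < g$ apply (ii) as before, and for $g \leq d < \gon(C)$ use (iii) to reduce to showing $d \notin \calD(C/k)$. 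The hypothesis handles $d = g$, and for $g+1 \leq d < \gon(C)$ I would invoke Corollary~\ref{cor:HighDegreeP1Param} to argue that any closed point of degree $d$ would be $\PP^1$-parameterized, forcing $d \in \dendegs_{\PP^1}(C/k)$ by Lemma~\ref{lem:bpfGivesP1param} and hence $d \geq \gon(C)$, a contradiction.

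The main obstacle, though modest, is precisely this last subcase: one must propagate the assumption $g \notin \calD(C/k)$ to the entire window $g \leq d < \gon(C)$, and Corollary~\ref{cor:HighDegreeP1Param} (which upgrades any closed point of degree at least $g+1$ to a $\PP^1$-parameterized one via Riemann--Roch) is the essential bridge that closes this gap. All other steps are direct consequences of (i)--(iii) and a small bookkeeping argument.
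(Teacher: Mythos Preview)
Your proposal is correct and uses the same key ingredients as the paper's proof: the decomposition \(\dendegs = \dendegs_{\PP^1} \cup \dendegs_{\AV}\), Remark~\ref{rem:gon_P1param}, Remark~\ref{rem:AVsimple}, Lemma~\ref{lem:AVDensityDegreeLarge}, and Corollary~\ref{cor:HighDegreeP1Param} to handle degrees \(\geq g+1\). The only difference is organizational: you argue condition-by-condition through the branches of the piecewise formula, while the paper instead determines the possible values of \(\min(\dendegs(C/k))\) directly (showing it is either \(\gon(C)\) or \(g\), and that the latter forces the three conditions); both routes are equally valid and of comparable length.
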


\begin{proof}
    It suffices to consider the case \(\min(\dendegs(C/k)) = \min(\dendegs_{\AV}(C/k))\).  By Remark~\ref{rem:AVsimple}, we have \(\min(\dendegs_{\AV}(C/k)) \geq g\).  If \(\min(\dendegs(C/k)) \geq g+1\), any closed point of degree~\(\min(\dendegs(C/k))\) is \(\PP^1\)-parameterized by Corollary~\ref{cor:HighDegreeP1Param} and so \(\min(\dendegs(C/k)) = \gon(C)\). The remaining case is \(\min(\dendegs(C/k)) = g\),
    which requires \(\gon(C) = \min(\dendegs_{\PP^1}(C/k)) \geq g\) by Remark~\ref{rem:gon_P1param}, and \(g \in \calD(C/k)\) and \(\rk(\Pic^0_C(k)) > 0\) by Lemma~\ref{lem:AVDensityDegreeLarge}.
\end{proof}


\subsection{The density degree set under extensions and the potential density degree set}

The density degree set \(\dendegs(C/k)\) of a curve over a number field can become larger after passing to a finite extension
\(k'/k\) of number fields (e.g. if \(C = V(x^2 + y^2 + z^2) \subset \PP^2\), \(\dendegs(C/\Q) = 2\Z\) and \(\dendegs(C/\Q(i)) = \Z\)).  Propositions~\ref{prop:DenDegLarge} and~\ref{prop:DensdegsClosedUnderScalarMult} imply that there is a maximal growth possible, or, equivalently, that the potential density degree set agrees with the density degree set after passing to a fixed finite extension.

\begin{prop}\label{prop:dendegs_extension}
    Let \(C\) be a nice curve defined over a number field \(k\), and let \(k'/k\) be a finite Galois extension.  Then
    \[\dendegs(C/k) \subset \dendegs(C/k').\]
    In particular, there exists a finite extension \(k_0/k\) such that \(\potdendegs(C/k) = \dendegs(C/k_0)\).
\end{prop}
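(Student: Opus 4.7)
The plan is to first prove the inclusion \(\dendegs(C/k) \subset \dendegs(C/k')\) and then deduce the existence of \(k_0\) by an exhaustion argument. For the inclusion, let \(d \in \dendegs(C/k)\); by Corollary~\ref{cor:dendeg_parameterized}, there exists a parameterized degree~\(d\) point on \(C/k\). If this point is \(\PP^1\)-parameterized, the underlying degree~\(d\) morphism \(C \to \PP^1_k\) base-changes to a degree~\(d\) morphism \(C_{k'} \to \PP^1_{k'}\) (the degree is preserved because \(C\) is geometrically integral), and applying Proposition~\ref{prop:hilbert-curve} over \(k'\) immediately yields \(d \in \dendegs(C/k')\).

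If instead the parameterized point is AV-parameterized, I may assume it is also \(\PP^1\)-isolated (else the previous case applies), so \([x] + A \subset W^d_C\) with \(A \subset \Pic^0_C\) of positive rank. This containment is preserved after base change and \(A_{k'}\) still has positive rank. Let \(U \subset [x] + A\) denote the open subset where \(\hh^0 = 1\) and let \(V \subset \Sym^d_C\) be the Zariski closure of the image of the section of the Abel--Jacobi map over \(U\) supplied by Lemma~\ref{lem:BrSev}. Then \(V\) is a geometrically integral \(k\)-subvariety birational to the translate \([x] + A\) of an abelian variety, and Proposition~\ref{prop:P1isolAVparam} identifies a Zariski dense subset of \(V(k)\) consisting of \(k\)-rational integral divisors of degree~\(d\). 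After base change, \(V_{k'}(k')\) is Zariski dense in \(V_{k'}\), and invoking Hilbert irreducibility for \(V_{k'}\) (which holds by N\'eron's theorem since \(V_{k'}\) is birational to an abelian variety), I argue that for each nontrivial partition \(\mathbf{d} \vdash d\) the locus of \(v \in V_{k'}(k')\) lying in \(\sigma_{\mathbf{d},k'}(\Sym^{\mathbf{d}}_{C_{k'}}(k'))\) is thin; the complement therefore contains Zariski dense degree~\(d\) points on \(C_{k'}\), giving \(d \in \dendegs(C/k')\).

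For the ``in particular'' part, the first part allows me to rewrite \(\potdendegs(C/k)\) as a union over finite Galois extensions (any finite \(k'/k\) is contained in its finite Galois closure). By Proposition~\ref{prop:DenDegLarge}, for \(d \geq \max(2g,1)\) the condition \(d \in \dendegs(C/k')\) is equivalent to \(\ind(C/k') \mid d\); since \(\ind(C/k')\) is a positive integer that is monotonically non-increasing in~\(k'\), it stabilizes at its minimum value over some finite Galois \(k_0^{\textup{big}}/k\). For each of the finitely many small degrees \(d < \max(2g,1)\) in \(\potdendegs(C/k)\), I pick a finite Galois extension \(k_d/k\) witnessing \(d \in \dendegs(C/k_d)\), and let \(k_0\) be the compositum of \(k_0^{\textup{big}}\) and all the \(k_d\), which is finite Galois over~\(k\). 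Applying the first part to the Galois sub-extensions \(k_d \subset k_0\) and \(k_0^{\textup{big}} \subset k_0\) then yields \(\dendegs(C/k_0) \supset \potdendegs(C/k)\), with the reverse containment being trivial.

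The main obstacle lies in the Hilbert irreducibility step of the AV-parameterized case: showing that for every nontrivial \(\mathbf{d}\), the cover \(\sigma_{\mathbf{d}}^{-1}(V) \to V\) has no rational section over any intermediate field \(k \subset k' \subset \bar{k}\), equivalently that its geometric monodromy group acts on the generic fiber without fixed points. The absence of a \(k\)-rational section is forced by the integral representatives supplied by Proposition~\ref{prop:P1isolAVparam}, but ruling out sections over larger fields requires a finer analysis relating the monodromy to the imprimitivity structure of the Galois action on the support of an integral degree~\(d\) point.
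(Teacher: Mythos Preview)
Your treatment of the \(\PP^1\)-parameterized case and of the ``in particular'' clause is fine and matches the paper's approach.  The genuine problem is your AV-parameterized step.

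Abelian varieties over number fields do \emph{not} satisfy Hilbert irreducibility in the sense you need: they have infinitely many connected \'etale covers (isogenies), so images of rational points under degree \(\geq 2\) covers need not be thin.  What they satisfy (by~\cite{cdjlz}, not N\'eron) is the \emph{weak} Hilbert property, which only controls \emph{ramified} covers.  You would therefore need to know that each component of \(\sigma_{\mathbf d}^{-1}(V)\) dominating \(V\) is ramified over \(V\), and this is simply false in general.  The paper gives an explicit example (immediately after Lemma~\ref{lem:P1dendeg_extensions}) of an AV-parameterized \(\PP^1\)-isolated degree~\(4\) point \(x\) and a Galois extension \(k'/k\) for which \emph{every} class in \([x]+A(k')\) is represented only by reducible divisors over \(k'\).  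In that situation the ``thin'' locus you describe is all of \(V_{k'}(k')\), so no monodromy argument of the type you sketch can succeed; the obstacle you flag in your last paragraph is fatal, not merely technical.

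The paper's argument avoids this entirely.  Starting from infinitely many degree~\(d\) parameterized points on \(C/k\), either one of them stays irreducible over \(k'\) (and is then visibly parameterized over \(k'\)), or they all become reducible.  In the latter case the \emph{Galois} hypothesis, via Proposition~\ref{prop:SplittingUnderExtns}\eqref{part:Galois}, forces each \(x_{k'}\) to split into pieces of a common degree \(n\mid d\); pigeonhole over the finitely many divisors of \(d\) produces some \(e\mid d\) with infinitely many degree~\(e\) points on \(C_{k'}\), so \(e\in\dendegs(C/k')\), and then the closure of \(\dendegs\) under scalar multiplication (Proposition~\ref{prop:DensdegsClosedUnderScalarMult}) gives \(d=(d/e)\cdot e\in\dendegs(C/k')\).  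No irreducibility over \(k'\) is ever asserted for the original family, and no Hilbert-type input on \(V\) is required.
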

\begin{proof}
    Let \(d\in \dendegs(C/k)\).  Then by Corollary~\ref{cor:dendeg_parameterized}, there exists a degree~\(d\) parameterized point \(x\in C_k\).  If \(x_{k'}\) is irreducible, then it is a degree~\(d\) point on \(C_{k'}\) and Definitions~\ref{def:P1param} and~\ref{def:AVparam} imply that \(x_{k'}\) is parameterized.

    Thus, we have reduced to the case that \(x_{k'}\) is reducible, and further, we may assume that the above basechange construction results in reducible divisors for all (infinitely many) degree~\(d\) parameterized points \(x\in C_k\).  Then by Proposition~\ref{prop:SplittingUnderExtns}\eqref{part:Galois}, each \(x_{k'}\) contains a degree~\(e\) point for some \(e|d\) (that depends on \(x_{k'}\)).  By the pigeonhole principle, there must be infinitely many \(x_{k'}\) that contain a degree~\(e\) point for the same fixed \(e|d\).  In particular, \(e\in \dendegs(C/k')\), so Proposition~\ref{prop:DensdegsClosedUnderScalarMult} implies that \(d\in \dendegs(C/k')\).

    The containment \(\delta(C/k) \subset  \delta(C/k')\) implies that the potential density degree set can also be defined as the union of the density degree set \(\delta(C/k')\) for all Galois extensions \(k'/k\). Furthermore, there is some Galois extension \(k_1/k\) over which \(C(k_1)\neq\emptyset\), so in particular, \(\ind(C/k_1) = 1\). By Proposition~\ref{prop:DenDegLarge}, \(\dendegs(C/k_1)\supset \Z_{\geq\max(2g(C),1)}\). Thus, for all Galois extensions \(k'/k\) that contain \(k_1\), there are only finitely many possibilities for \(\dendegs(C/k')\).
    Thus the union 
    \[
        \bigcup_{k'/k \textup{ Galois}, k'\supset k_1}\dendegs(C/k')
    \]
    reduces to a finite union. By taking the compositum of the finitely many extensions that give this union, we obtain \(k_0/k\), a finite Galois extension such that \(\potdendegs(C/k) = \dendegs(C/k_0).\)
\end{proof}

    A reader encountering these objects for the first time may reasonably wonder why the containment \(\dendegs(C/k)\subset  \dendegs(C/k')\) in Proposition~\ref{prop:dendegs_extension} is not immediate, even without the Galois hypothesis. Indeed, if \(x\in C\) is a \(\PP^1\)-parameterized point then by Lemma~\ref{lem:P1paramDefineorem} there is a nonconstant morphism $\phi\colon \PP^1 \to \Sym^dC$ whose image contains \(x\in \Sym^d(k)\), and such a morphism retains these properties over base change to any extension \(k'\).  However, the key point is that the base change \(x_{k'}\) may no longer be a \emph{point}, i.e., it may become reducible. 

    For \(\PP^1\)-parameterized points, we may leverage Hilbert's irreducibility theorem to show that even though \(x\) may become reducible over an extension, there will be some other point in \(\phi(\PP^1(k'))\) that corresponds to a degree~\(d\) point.
    \begin{lemma}\label{lem:P1dendeg_extensions}
        Let \(C\) be a nice curve defined over a number field \(k\), let \(x\in C\) be a degree~\(d\) \(\PP^1\)-parameterized point and let \(\phi\colon \PP^1 \to \Sym^d_C\) be the morphism corresponding to a degree~\(d\) map \(\pi \colon C \to \PP^1\) with \(x\) as a fiber. Then for any finite extension \(k'/k\), there exists a degree~\(d\) point \(y\in C_{k'}\) such that \(y\in \im\phi_{k'}\). In particular,
        \[
        \dendegs_{\PP^1}(C/k) \subset \dendegs_{\PP^1}(C/k').
        \]
    \end{lemma}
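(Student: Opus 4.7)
The plan is to deduce this directly from Hilbert's irreducibility theorem (Proposition~\ref{prop:hilbert-curve}) applied over the extension \(k'\). The key observation is that although the specific point \(x\) witnessing \(d \in \dendegs_{\PP^1}(C/k)\) may itself become reducible after base change to \(k'\), the \emph{morphism} \(\pi \colon C \to \PP^1\) (and equivalently \(\phi \colon \PP^1 \to \Sym^d_C\)) base changes to a morphism of the same degree, and Hilbert irreducibility over \(k'\) produces infinitely many other fibers that remain integral.

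First, I would note that \(C_{k'}\) is still a nice curve over \(k'\): smoothness and properness are preserved under any base change, and geometric integrality is preserved under arbitrary field extensions. Likewise, \(\pi_{k'} \colon C_{k'} \to \PP^1_{k'}\) is a finite morphism of the same degree \(d\), since the degree of a finite morphism is stable under base change.

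Next, I would apply Proposition~\ref{prop:hilbert-curve} to the pair \((C_{k'}, \pi_{k'})\) to obtain a Zariski dense (hence nonempty, in fact infinite) set of points \(t \in \PP^1(k')\) for which the scheme-theoretic fiber \(\pi_{k'}^{-1}(t)\) is a degree~\(d\) closed point on \(C_{k'}\). For any such \(t\), set \(y := \pi_{k'}^{-1}(t)\). By the construction of \(\phi\) (via Remark~\ref{rmk:HilbFunctor}), we have \(y = \phi_{k'}(t) \in \im \phi_{k'}\), and by construction \(y\) is a degree~\(d\) point with \(\pi_{k'}(y) = t \in \PP^1(k')\). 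Hence \(y\) is a degree~\(d\) \(\PP^1\)-parameterized point on \(C_{k'}\), witnessing \(d \in \dendegs_{\PP^1}(C/k')\).

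There is no real obstacle here beyond correctly invoking Hilbert irreducibility over the extension field; the substantive content of the lemma is that Hilbert irreducibility holds over any number field, which is exactly Proposition~\ref{prop:hilbert-curve}. The conceptual point worth emphasizing in the exposition is that Lemma~\ref{lem:P1paramDefineorem}\eqref{cond:P1inSymd} gives a morphism \(\PP^1 \to \Sym^d_C\) that persists after base change, but the specific \(k\)-point \(x \in (\Sym^d_C)(k)\) in its image need no longer correspond to an integral divisor over \(k'\); Hilbert irreducibility is precisely what allows us to replace \(x\) by a different point in the image of \(\phi_{k'}\) that does remain integral.
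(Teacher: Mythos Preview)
Your proof is correct and follows essentially the same approach as the paper: base change \(\pi\) to a degree~\(d\) morphism \(\pi_{k'}\colon C_{k'}\to\PP^1_{k'}\), apply Proposition~\ref{prop:hilbert-curve} over \(k'\) to obtain a point \(t\in\PP^1(k')\) with integral fiber \(y=\pi_{k'}^{-1}(t)\), and observe that \(y=\phi_{k'}(t)\in\im\phi_{k'}\). The extra remarks you include (that \(C_{k'}\) remains nice, that degree is preserved under base change, and the expository point about why one cannot simply use \(x_{k'}\)) are welcome clarifications but do not change the argument.
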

    \begin{proof}
    The base extension of \(\pi\) to \(k'\) is again a degree~\(d\) morphism \(\pi_{k'}\colon C_{k'} \to \PP^1_{k'}\).  The map \(\PP^1_{k'} \to (\Sym^d_C)_{k'}\) sending a point \(p \in \PP^1_{k'}\) to the fiber \(\pi_{k'}^{-1}(p)\) is the base extension \(\phi_{k'}\) of \(\phi\).  By Proposition~\ref{prop:hilbert-curve}, there exists a point \(p \in \PP^1(k')\) such that \(\pi_{k'}^{-1}(p) = y\) is a degree~\(d\) point on \(C_{k'}\).  By construction \(y \in \im \phi_{k'}\).
    \end{proof}
    
    If \(x\) is an AV-parameterized point then there is a positive rank abelian subvariety \(A\subset  \Pic^0_C\) such that \([x] + A\subset  W_C^d\).  As in the case of \(\PP^1\)-parameterized points, many properties are preserved under basechange. Indeed, the abelian variety \(A\) remains positive rank under any finite extension \(k'/k\) and the containment \([x] + A\subset  W_C^d\) is preserved under basechange. The key issue again is that \(x_{k'}\) may be reducible, i.e., may no longer be a point. Moreover, in contrast to the case of \(\PP^1\)-parameterized points, it is in fact possible that \textit{every} point of \([x] + A(k')\) is represented as a reducible divisor over \(k'\).

\begin{example}
    Let \(k' := \Q(\sqrt{-3})\) and let \(E\) be the elliptic curve defined (in simplified Weierstrass form) by \(y^2 = x^3 - \frac{3}{16}x - \frac1{32} - \frac{\sqrt{-3}}{9}\), which is LMFDB curve \texttt{2.0.3.1-283.1-a1}~\cite{lmfdb}*{\href{https://www.lmfdb.org/EllipticCurve/2.0.3.1/283.1/a/1}{2.0.3.1-283.1-a1}}.  The elliptic curve \(E\) has rank \(1\) over \(k'\), with Mordell-Weil group generated by the point \(P := [\frac14 + \frac16\cdot\sqrt{-3}: \frac14 -\frac14\cdot \sqrt{-3}: 1]\), and is not isogenous to its Galois conjugate curve \(E'\) (i.e., \(E\) is not a \(\Q\)-curve). Let \(P'\in E'\) denote the conjugate of \(P\in E\).

    The restriction of scalars \(A := \Res_{k'/k}E\) is an abelian surface with the following properties (see, e.g., ~\cite{Poonen-RatlPoints}*{Section 4.6} for more details):
    \begin{enumerate}
        \item The base change \(A_{k'}\) is isomorphic to \(E\times E'\).
        \item The surface \(A\) is birational to the (singular) surface \(S\subset \PP^4\) given by the vanishing of
        \[
            \frac12w_0^3 + 3w_0^2w_3 + w_0w_1^2 - 3w_0w_2^2 - 16w_3^3 + 144w_3w_4^2, \quad
            \frac89w_0^3 + \frac32w_0^2w_4 + w_0w_1w_2 - 24w_3^2w_4 + 24w_4^3,
        \]
        where the induced birational map \(S_{k'}\dasharrow E\times E'\subset \PP^2_{k'}\times \PP^2_{k'}\) sends a point \([w_i]_i\) to
        \[
\left(\left[4w_3 + 4\sqrt{-3}w_4:
w_1 + \sqrt{-3}w_2 : 4w_0\right], \left[4w_3 - 4\sqrt{-3}w_4:
w_1 - \sqrt{-3}w_2 : 4w_0\right]\right).\footnote{This statement and all other computational claims in this example are verified in \texttt{Magma} code~\cite{magma} available on \texttt{Github}~\cite{VV-code}.}
        \]
        \item The geometric N\'eron-Severi group of \(A\) is \(\Z\times \Z\), the action of \(G_k\) factors through \(\Gal(k'/k)\simeq \Z/2\), and the involution interchanges the two factors.
    \end{enumerate}

    Define \(C\subset S\) to be the vanishing of 
    \[
    w_4 - w_3\quad \textup{and}\quad 18w_1w_2 + 27w_4w_0 + 16w_0^2;
    \]
    One can check that \(C\subset \PP^4\) is a smooth genus \(4\) curve (given as a complete intersection of a hyperplane, a quadric, and a cubic), and that the integral curve \(Z\subset A\) corresponding to \(P\times E'\cup E\times P'\) pulls back on \(C\) to give a degree~\(4\) \emph{point} \(x\in C\) that is \(\PP^1\)-isolated. Thus we have a map induced by pullback \(\Pic^{(1,1)}_A \to W^4_C= \Pic^4_C\), and the image contains the class \([x]\). In particular, \(x\) is a \(\PP^1\)-isolated AV-parameterized point, so by Proposition~\ref{prop:P1isolAVparam}, a finite index subgroup of \({\im}\left(\Pic^{(1,1)}_A(k)\right)\) corresponds to degree~\(4\) points on \(C\) over \(k\).

    However, over \(k'\), the image of \(\Pic^{(1,1)}_A(k')\to W^4_C(k')\) consists entirely of divisors with reducible representatives, as we now explain. Indeed, since \(A_{k'}\simeq E\times E'\) and \(E\) has rank \(1\), we have the isomorphism:
    \[
    \Z\times \Z \simeq \Pic^{(1,1)}_A(k'), \quad (n,m)\mapsto [nP]\times E' \cup E \times [mP']. 
    \]
    Thus, \(\Pic^{(1,1)}_A(k')\) consists entirely of reducible curves which then pullback to reducible effective divisors on \(C\). If the corresponding divisor class has \(\hh^0 = 1\) (which occurs generically, since \(x\) is \(\PP^1\)-isolated), then it cannot be represented by a degree~\(4\) point by Lemma~\ref{lem:P1isolatedIrreducible}. 
    \hfill\(\righthalfcup\)
\end{example}

\section{Single sources of low degree parameterized points}\label{sec:SingleSource}

    Parameterized points whose degree is significantly smaller than the genus of the curve are rare. Indeed, if they do exist, then they arise as pullbacks under a single morphism.

\begin{thm}\label{thm:single_source}
    Let \(C\) be a nice curve of genus \(g\).
    \begin{enumerate}
        \item There exists a nice curve \(C_0/k\) and a finite map \(\pi \colon C \to C_0\) with degree at least \(2\) such that for every \(\PP^1\)-parameterized point \(x \in C\) of degree less than \(\sqrt{g} + 1\), the image \(\pi(x) = z \in C_0\) is a \(\PP^1\)-parameterized point of degree~\(\deg(x)/\deg \pi\) and \(\pi^*|z| = |x|\). \label{part:P1param}
        \item Moreover, for every \(\AV\)-parameterized, \(\PP^1\)-isolated point \(x\in C\) that 
        has \(\deg(x) < \frac12(\sqrt{g} + 1)\), the image \(\pi(x) = z\) is an \(AV\)-parameterized point of degree~\(\deg(x)/\deg \pi\) and if \(A\in \Pic^0_C\) is such that \([x]+A\subset W^d_C\), then \(A = \pi^*A_0\) where \([z] + A_0 \subset W^d_{C_0}\).\label{part:AVparam}
        \item \label{part:dendegs}In particular,
    \begin{align*}
    \dendegs_{\PP^1}(C/k)\cap (1, \sqrt{g} + 1) & = \deg(\pi)\cdot \left(\dendegs_{\PP^1}(C_0/k)\cap \left[1, \frac{\sqrt{g} + 1}{\deg \pi}\right)\right), \quad \textup{and}\\
    \dendegs_{\AV}(C/k)\cap \left(1, \frac12(\sqrt{g} + 1)\right) & \subset  \deg(\pi)\cdot \left(\dendegs_{\AV}(C_0/k)\cap \left[1, \frac{\sqrt{g} + 1}{2\deg \pi}\right)\right).
    \end{align*}
    \end{enumerate}
\end{thm}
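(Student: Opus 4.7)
The plan is to construct $\pi\colon C \to C_0$ via the Castelnuovo--Severi inequality so that every basepoint-free pencil of small degree factors through $\pi$, deduce part~\eqref{part:P1param} directly from this construction, and deduce part~\eqref{part:AVparam} by combining Proposition~\ref{prop:DensdegsClosedUnderScalarMult} with a rigidity argument using the AV-family; part~\eqref{part:dendegs} is then a formal consequence.

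For part~\eqref{part:P1param}, if $C$ has no $\PP^1$-parameterized points of degree in $(1, \sqrt{g}+1)$ the statement is vacuous, so assume some exists. Any two basepoint-free pencils $\varphi_i\colon C \to \PP^1$ of degrees $d_i < \sqrt{g}+1$ must share a nontrivial common factor, since Castelnuovo--Severi would otherwise give $g \leq (d_1-1)(d_2-1) < g$. Iteratively applying Castelnuovo--Severi (with Riemann--Hurwitz bounding the genera of the intermediate covers at each step) produces a finite map $\pi\colon C \to C_0$ of degree at least~$2$ through which every such pencil $|x|$ factors; that is, $|x| = \pi^*|z|$ for a pencil $|z|$ of degree $\deg(x)/\deg\pi$ on $C_0$. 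The scheme-theoretic image $\pi(x) = z$ is then a closed point of the asserted degree on $C_0$ that is $\PP^1$-parameterized by $|z|$.

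For part~\eqref{part:AVparam}, let $x$ be AV-parameterized and $\PP^1$-isolated of degree $d < (\sqrt{g}+1)/2$; by Remark~\ref{rem:Zariski_dense} I may assume the witnessing abelian subvariety $A \subset \Pic^0_C$ has Zariski-dense $k$-points. Proposition~\ref{prop:DensdegsClosedUnderScalarMult} gives that $|2x|$ is a basepoint-free pencil of degree $2d < \sqrt{g}+1$, so part~\eqref{part:P1param} yields $|2x| = \pi^*|w|$ and in particular $2x = \pi^* w'$ for some specific effective $w'$ on $C_0$. To promote $2x = \pi^* w'$ to $x = \pi^*y$, I would use a rigidity argument on the AV-family. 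By Proposition~\ref{prop:P1isolAVparam}, a Zariski-dense finite-index subgroup $H < A(k)$ parameterizes a family of AV-parameterized, $\PP^1$-isolated degree $d$ points $x_a$ with $[x_a] = [x] + a$. A fiber-by-fiber analysis of $2x_a = \pi^* w_a'$ over $\bar k$ shows that each $Q$ in $\mathrm{supp}(w_a')$ either has multiplicity $2$ in $w_a'$ with $\pi$ unramified above $Q$, or multiplicity~$1$ with $\pi$ totally ramified there with every ramification index equal to~$2$; the latter forces some geometric point of $x_a$ into the finite ramification locus of $\pi$. Since only finitely many effective degree $d$ divisors can be supported on this finite locus, for all but finitely many $a \in H$ we obtain $w_a' = 2y_a$ and $x_a = \pi^*y_a \in \pi^*(\Sym^{d/\deg\pi}_{C_0})$. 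The pullback map $\pi^*\colon \Sym^{d/\deg\pi}_{C_0} \to \Sym^d_C$ is a morphism of projective varieties, so its image is Zariski-closed; the Zariski closure in $\Sym^d_C$ of the preimage of $[x] + A$ (using upper semicontinuity of $\hh^0$ to invert the Abel--Jacobi map on an open neighborhood of $[x]$) is an irreducible subvariety containing both $x$ and a Zariski-dense collection of pullback points $x_a$, which forces $x = \pi^*y$ for some $y \in \Sym^{d/\deg\pi}_{C_0}$. This $y$ is a closed point because $x$ is irreducible, and $y$ is AV-parameterized: the same rigidity argument shows $A \subset \pi^*(\Pic^0_{C_0})$, so $A_0 \colonequals (\pi^*)^{-1}(A)^\circ \subset \Pic^0_{C_0}$ is an abelian subvariety mapping isogenously onto $A$, hence of positive rank, with $[y] + A_0 \subset W^{d/\deg\pi}_{C_0}$.

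Part~\eqref{part:dendegs} then follows directly: parts~\eqref{part:P1param} and~\eqref{part:AVparam} give the forward inclusions, while pulling back pencils (respectively, AV-parameterized points) on $C_0$ via $\pi$ and applying Lemma~\ref{lem:bpfGivesP1param} gives the reverse. The main obstacle is the iterated Castelnuovo--Severi construction of $\pi$ in part~\eqref{part:P1param}: each pairwise common factor is cleanly produced by Castelnuovo--Severi, but the iteration requires careful bookkeeping to ensure that the joint common factor of all small-degree pencils still has degree at least~$2$. The ramification-exclusion step in part~\eqref{part:AVparam} is the other delicate point, but it is cleanly handled by balancing the finiteness of the branch locus of $\pi$ against the Zariski-density of $A(k)$.
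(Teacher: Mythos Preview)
Your overall strategy matches the paper's: build \(\pi\) via Castelnuovo--Severi in part~\eqref{part:P1param}, then in part~\eqref{part:AVparam} double to \(|2x|\), apply part~\eqref{part:P1param}, and undo the doubling by a ramification argument along a Zariski-dense family in \([x]+A\). Two points, however, are genuine gaps rather than mere elisions.

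\textbf{Part~\eqref{part:P1param}: Riemann--Hurwitz is the wrong tool.} After the first Castelnuovo--Severi step you have \(C\to Y\) of degree \(m\) with \(Y\) the image of \(C\) in \(\PP^1\times\PP^1\). To iterate, you need a bound on \(g(Y)\) strong enough that Castelnuovo--Severi applied to \(C\to Y\times\PP^1\) again forces a nontrivial common factor. Riemann--Hurwitz only gives \(g(Y)\le (g-1)/m+1\), which is far too weak: e.g.\ for \(g=10\), \(m=2\), \(d_3=4\) one would need \(10>(2-1)(4-1)+4\cdot g(Y)\), but Riemann--Hurwitz allows \(g(Y)\) up to \(5\). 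What actually works is the adjunction bound for the image curve \(Y\subset\PP^1\times\PP^1\), namely \(g(Y)\le(d_1/m-1)(d_2/m-1)<\bigl((\sqrt{g}+1)/m-1\bigr)^2\). The iteration then requires showing that this shape of genus bound is preserved when the two factors \(Y_1,Y_2\) have positive genus; this is exactly the content of the paper's Lemmas~\ref{lem:genus_bound} and~\ref{lem:PositivityClaim}, and is the ``careful bookkeeping'' you flag. It does not follow from Riemann--Hurwitz.

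A second, smaller point: you work throughout with pencils, but the conclusion \(\pi^*|z|=|x|\) concerns the \emph{complete} linear system, which may have \(\hh^0>2\). The paper handles this by defining \(\phi\colon C\to X\subset\PP^N\) via the full \(|x|\). Your pencil approach can be repaired (every \(D\in|x|\) lies in some basepoint-free subpencil, which factors through \(\pi\), forcing \(D\) to be a \(\pi\)-pullback; irreducibility of \(|x|\) then pins down the class on \(C_0\)), but this needs to be said.

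\textbf{Part~\eqref{part:AVparam}: ``all but finitely many'' is false.} The locus of \(a\in A\) for which \(x_a\) meets the ramification divisor of \(\pi\) is a proper closed subvariety (a union of translates of \(W^{d-1}_C\) intersected with \([x]+A\)), not a finite set. So you only get a Zariski-dense open set of good \(a\), not a cofinite set. Fortunately your closure argument only needs Zariski density, so this is a wording error rather than a fatal one; the paper states the correct version and packages the passage from ``Zariski-dense pullbacks'' to ``\(A\) descends to \(\Pic^0_{C_0}\)'' in Lemma~\ref{lem:AV_pullback}.
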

    \begin{remark}
        Theorem~\ref{thm:single_source} guarantees that all but finitely many very low degree parameterized points \(x\) are pullbacks of lower degree points by the single map \(\pi\). This strengthens \cite{Derickx-SingleSource}*{Corollary 1.3}, which showed that all but finitely many low degree points are pulled back under one of finitely many maps.  
        
        Since \(\kk(\pi(x))\) is a subfield of \(\kk(x)\), if \(\pi(x)\) is not a rational point, then the residue field \(\kk(x)\) is an \defi{imprimitive} extension.\footnote{A finite extension \(L/k\) is \defi{primitive} if it has no nontrivial proper subfields and \defi{imprimitive} otherwise. We say a closed point \(x\in C\) is primitive or imprimitive if its residue field extension \(\kk(x)/k\) is.} In other words, if \(\min\dendegs(C/k) < d < \frac12(\sqrt{g} + 1)\), there are \emph{no} degree~\(d\) primitive parameterized points. 
        For related work on primitive points see \cites{KS-primitive, KS-singlesource, Derickx-LargeDegree, Derickx-SingleSource}.

        A related result of Vojta shows that inequalities involving the arithmetic discriminant imply that for any map \(\pi \colon C \to \PP^1\), all but finitely many points \(x\) with \(\deg{x} < (g-1)/\deg{\pi} + 1\) are \defi{contracted} by \(\pi\), i.e., \(\deg(\pi(x)) < \deg(x)\) \cite{Vojta-arithmeticdiscriminant}.
    \end{remark}

\subsection{Key tools in the proof}\label{sec:single_source_key}

There are two key ingredients in the proof of Theorem~\ref{thm:single_source}.  The first is the bound on the  genus of a curve \(C\) that has a (possibly singular) birational model in a particular numerical class on a smooth surface, which can be derived from the adjunction formula \cite{Hartshorne}*{Chapter II, Proposition 8.20}.  We briefly recall two cases of importance for our argument.  
\begin{itemize}
    \item If \(C\) is birational to a degree~\(d\) plane curve, then \(g(C) \leq {d-1 \choose 2}\).
    \item If \(C_1\) and \(C_2\) are smooth curves of genera \(g_1\) and \(g_2\) and \(C \to C_1 \times C_2\) is birational onto its image, then
    \(g(C) \leq (d_1 - 1)(d_2 - 1) + d_1 g_1 + d_2 g_2\) where \(d_i\colonequals\deg( \pi_i \colon C \to C_i )\).
\end{itemize}
The second bound implies that two maps \(C \to C_1\) and \(C \to C_2\) factor through some common map if the genus of \(C\) is sufficiently large.  This is frequently called the \defi{Castelnuovo--Severi inequality}.  The following identifies a class of maps where we can iteratively apply this.

\begin{lemma}\label{lem:genus_bound}
    Let \(C\) be a nice curve of genus \(g\) and suppose that for \(i=1,2\) we have integers \(2 \leq d_i < \sqrt{g} + 1\) and finite maps to nice curves \(\varphi_i \colon C \to Y_i\) of degree~\(d_i\).  If \(g(Y_i) < \left(\frac{\sqrt{g}+1}{d_i}-1\right)^2\) for \(i=1,2\), then the map \((\varphi_1, \varphi_2) \colon C \to Y_1 \times Y_2\) factors through a nontrivial map \(\varphi_3 \colon C \to Y_3\) of degree~\(2 \leq d_3 < \sqrt{g} + 1\) and \(g(Y_3) < \left(\frac{\sqrt{g}+1}{d_3}-1\right)^2\).
\end{lemma}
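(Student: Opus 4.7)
Set $s \colonequals \sqrt{g}+1$ for brevity, so the hypothesis reads $g(Y_i) < (s-d_i)^2/d_i^2$ and the conclusion to reach is $g(Y_3) < (s-d_3)^2/d_3^2$, where $g = (s-1)^2$. My strategy is to apply the second adjunction bound from Section~\ref{sec:single_source_key} (for curves in products of curves) twice: once to $C\to Y_1\times Y_2$ to produce $Y_3$, and a second time to $Y_3 \to Y_1\times Y_2$ to control its genus.

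For the first step, I will assume for contradiction that $\varphi \colonequals (\varphi_1,\varphi_2)\colon C \to Y_1\times Y_2$ is birational onto its image. Then the adjunction bound gives
\[
    g \;\leq\; (d_1-1)(d_2-1) + d_1 g(Y_1) + d_2 g(Y_2) \;<\; (d_1-1)(d_2-1) + \frac{(s-d_1)^2}{d_1} + \frac{(s-d_2)^2}{d_2}.
\]
Since the hypothesis $g(Y_i) < (s-d_i)^2/d_i^2$ forces $d_i < s$, a direct algebraic manipulation (using $g = (s-1)^2$ and clearing denominators) shows that the right-hand side is at most $(s-1)^2 = g$, yielding the contradiction. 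Hence $\varphi$ is not birational onto its image, so I may take $Y_3$ to be the normalization of the image $\varphi(C) \subset Y_1 \times Y_2$ and $\varphi_3\colon C \to Y_3$ the induced finite morphism of degree $d_3 \geq 2$. By construction each $\varphi_i$ factors as $\varphi_3$ followed by the induced map $Y_3 \to Y_i$ of degree $e_i \colonequals d_i/d_3$, so in particular $d_3$ divides both $d_1$ and $d_2$ and $d_3 \leq \min(d_1, d_2) < s$.

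For the genus of $Y_3$, I split into cases. If $e_i = 1$ for some $i$, then $Y_3 \to Y_i$ is an isomorphism of smooth projective curves, so $g(Y_3) = g(Y_i) < (s-d_i)^2/d_i^2 = (s-d_3)^2/d_3^2$, as desired. Otherwise $e_1, e_2 \geq 2$, and by construction $Y_3 \hookrightarrow Y_1 \times Y_2$ is birational onto its image, so the same adjunction bound applied to $Y_3$ gives
\[
    g(Y_3) \;\leq\; (e_1-1)(e_2-1) + e_1 g(Y_1) + e_2 g(Y_2) \;<\; \frac{(d_1 - d_3)(d_2 - d_3)}{d_3^2} + \frac{(s-d_1)^2}{d_1 d_3} + \frac{(s-d_2)^2}{d_2 d_3}.
\]
Multiplying through by $d_3^2$, the target bound $(s-d_3)^2$ follows from another elementary (though somewhat lengthy) inequality that uses $d_i < s$ and $d_3 \leq d_i$.

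The main obstacle is purely computational: verifying the two numerical inequalities above cleanly, especially the second, where three free parameters $d_1, d_2, d_3$ with the divisibility constraint $d_3 \mid \gcd(d_1,d_2)$ appear. I expect the cleanest presentation groups the cross-terms using the substitution $e_i = d_i/d_3$, reducing everything to polynomial inequalities in $s$ that are easy to verify given $d_i < s$.
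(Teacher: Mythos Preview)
Your proposal is correct and follows essentially the same approach as the paper: apply the adjunction/Castelnuovo--Severi bound for curves in \(Y_1\times Y_2\) to the image of \(C\), handle the case \(e_i=1\) separately, and otherwise reduce to the numerical inequality (the paper's Lemma~\ref{lem:PositivityClaim}). The only organizational difference is that you prove \(d_3\geq 2\) first via a separate contradiction (the \(d_3=1\) instance of the same inequality), whereas the paper deduces \(d_3>1\) at the end from the bound \(g(Y_3)<\bigl(\tfrac{\sqrt{g}+1}{d_3}-1\bigr)^2\leq g\); this avoids proving the inequality twice.
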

\begin{proof}[Proof of Lemma~\ref{lem:genus_bound}]
    Let \(Y_3\subset Y_1\times Y_2\) denote the image of \(C\) under \(\varphi_1, \varphi_2\). Let \(d_3\) denote the degree of the morphism \(\varphi_3\colon C \to Y_3\) induced by \((\varphi_1, \varphi_2)\); then the  projections \(Y_3 \to Y_i\) have degrees \(d_i/d_3\), for \(i = 1,2\). If \(d_i=d_3\) for some \(i\), then \(Y_3 \simeq Y_i\) and the conclusion holds; we may therefore assume that \(d_3\) is a proper divisor of \(\gcd(d_1, d_2)\). Hence, the adjunction formula, the assumed genus bounds, and Lemma~\ref{lem:PositivityClaim} below give:
    \begin{align*}
        g(Y_3) & \leq \left(\frac{d_1}{d_3} - 1\right)\left(\frac{d_2}{d_3} - 1\right) + \frac{d_1}{d_3}g(Y_1) + \frac{d_2}{d_3}g(Y_2) \\
        & < \left(\frac{d_1}{d_3} - 1\right)\left(\frac{d_2}{d_3} - 1\right) + \frac{d_1}{d_3}\left(\frac{\sqrt{g}+1}{d_1}-1\right)^2 + \frac{d_2}{d_3}\left(\frac{\sqrt{g}+1}{d_2}-1\right)^2 < \left(\frac{\sqrt{g} + 1}{d_3} -1\right)^2
    \end{align*}
    Additionally, \(\left(\frac{\sqrt{g} + 1}{d_3} -1\right)^2\leq g\), so \(\phi_3\) is not birational, i.e., \(d_3> 1\).
\end{proof}

\begin{lemma}\label{lem:PositivityClaim}
    Let \(g\in \Z_{\geq0}\) and let \(d_1,d_2\in \Z \cap [2, \sqrt{g} + 1)\). For any positive proper divisor \(d_3|\gcd(d_1,d_2)\), 
    \[
    \left(\frac{d_1}{d_3} - 1\right)\left(\frac{d_2}{d_3} - 1\right) + \frac{d_1}{d_3}\left(\frac{\sqrt{g} + 1}{d_1} - 1\right)^2+ \frac{d_2}{d_3}\left(\frac{\sqrt{g} + 1}{d_2} - 1\right)^2 < \left(\frac{\sqrt{g} + 1}{d_3} - 1\right)^2.
    \]
\end{lemma}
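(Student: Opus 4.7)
\medskip

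\noindent\textbf{Proof plan for Lemma~\ref{lem:PositivityClaim}.}
The plan is to reduce the claimed inequality to a positivity statement for a quadratic polynomial in $s := \sqrt{g}+1$ whose coefficients are visibly non-negative under the hypotheses $d_i \geq 2d_3$ (which comes from $d_3$ being a \emph{proper} divisor of $\gcd(d_1,d_2)$) and $d_i < s$. First, I would clear denominators by multiplying through by $d_1 d_2 d_3^2$, and rewrite each term $\frac{d_i}{d_3}(\frac{s}{d_i}-1)^2$ as $\frac{d_3 (s-d_i)^2}{d_i}$. Setting
\[
A \;:=\; d_1 d_2 d_3^2 \cdot \Bigl[\bigl(\tfrac{s}{d_3}-1\bigr)^2 - \bigl(\tfrac{d_1}{d_3}-1\bigr)\bigl(\tfrac{d_2}{d_3}-1\bigr) - \tfrac{d_1}{d_3}\bigl(\tfrac{s}{d_1}-1\bigr)^2 - \tfrac{d_2}{d_3}\bigl(\tfrac{s}{d_2}-1\bigr)^2\Bigr],
\]
the goal becomes $A > 0$.

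A direct expansion (this is the one routine calculation) yields $A = Q s^2 + 2 P d_3 s - P^2$, where $P := d_1 d_2$ and $Q := d_1 d_2 - d_3(d_1+d_2)$. The key move is then to observe the algebraic identity
\[
A \;=\; P d_3 \,(2s - d_1 - d_2) \;+\; Q\,(s^2 - P).
\]
This identity can be verified by expanding the right-hand side and using $Q - P = -d_3(d_1+d_2)$; it is designed precisely so that each term factors through a quantity controlled by the hypotheses $s > d_i$ and $d_i \geq 2 d_3$.

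It remains to check that both summands are non-negative and that the first is strictly positive. For the first summand: $P, d_3 > 0$, and from $s > \max(d_1, d_2) \geq (d_1+d_2)/2$ we get $2s - d_1 - d_2 > 0$. For the second: from $d_i \geq 2 d_3$ we have $d_i - d_3 \geq d_3$, so $(d_1-d_3)(d_2-d_3) \geq d_3^2$, which rearranges to $Q = (d_1-d_3)(d_2-d_3) - d_3^2 \geq 0$; meanwhile $s > \max(d_1,d_2) \geq \sqrt{d_1 d_2}$ gives $s^2 - P > 0$. Combining these, $A > 0$ strictly.

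The only real obstacle is guessing the identity for $A$ in the second step. Once one notices that both $2s - (d_1+d_2)$ and $s^2 - d_1 d_2$ are forced to be positive by the constraint $d_i < s$, the decomposition of $A$ into $P d_3(2s-d_1-d_2) + Q(s^2 - P)$ is essentially the unique way to expose these factors, and from there the proof is immediate.
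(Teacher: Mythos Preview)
Your proof is correct, and it takes a genuinely different route from the paper's.

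The paper also scales by \(d_1 d_2/d_3^2\), but then performs the substitution \(m = \min(d_1/d_3, d_2/d_3) - 2\), \(a = |d_1/d_3 - d_2/d_3|\), \(\varepsilon = (\sqrt{g}+1)/d_3 - m - 2 - a\), and writes the resulting quantity as an explicit polynomial in \(m,a,\varepsilon\) all of whose coefficients are nonnegative integers (with the \(\varepsilon\)-coefficient positive). This works, but the explicit polynomial is moderately complicated and the authors even cite a \texttt{Magma} verification. Your approach instead keeps the original variables, identifies the quadratic \(A = Qs^2 + 2Pd_3 s - P^2\), and splits it as \(A = Pd_3(2s - d_1 - d_2) + Q(s^2 - P)\). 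The two factors \(2s - d_1 - d_2\) and \(s^2 - d_1 d_2\) are exactly the elementary symmetric consequences of \(s > d_1, d_2\), and the nonnegativity \(Q = (d_1-d_3)(d_2-d_3) - d_3^2 \geq 0\) falls out of \(d_i \geq 2d_3\). This decomposition is shorter, avoids any auxiliary computation, and makes transparent precisely where each hypothesis (\(d_i < \sqrt g + 1\) and \(d_3\) a \emph{proper} divisor) is used. The paper's change of variables, on the other hand, has the virtue of making positivity manifest coefficient-by-coefficient without needing to spot a clever identity.
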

\begin{proof}
    The desired inequality is equivalent to proving that
    \[
    f \colonequals \frac{d_1d_2}{d_3^2}\left(\left(\frac{\sqrt{g} + 1}{d_3} - 1\right)^2 - \left(\frac{d_1}{d_3} - 1\right)\left(\frac{d_2}{d_3} - 1\right) - \frac{d_1}{d_3}\left(\frac{\sqrt{g} + 1}{d_1} - 1\right)^2 - \frac{d_2}{d_3}\left(\frac{\sqrt{g} + 1}{d_2} - 1\right)^2\right) >0.
    \]
    Set \(m \colonequals \min\left(\frac{d_2}{d_3}, \frac{d_1}{d_3}\right) - 2\), \(a := \left|\frac{d_2}{d_3}- \frac{d_1}{d_3}\right|\), and \(\varepsilon := \frac{\sqrt{g} + 1}{d_3} - m - 2 - a\). Note that \(m, a\in \Z_{\geq0}\), \(m + 2 + a = \max\left(\frac{d_2}{d_3}, \frac{d_1}{d_3}\right)\),  and so \(\varepsilon>0\). Since \(f\) is a function in \(\frac{\sqrt{g} + 1}{d_3}, \frac{d_1}{d_3}, \frac{d_2}{d_3}\) and furthermore is invariant under interchanging \(\frac{d_1}{d_3}, \frac{d_2}{d_3}\), we may express \(f\) instead as a polynomial in \(m,a, \varepsilon\). Indeed, one can compute\footnote{\texttt{Magma} code~\cite{magma} verifying this claim is available on Github~\cite{VV-code}.} that as a polynomial in \(m, a,\) and \(\varepsilon\), all coefficients of \(f\) are nonnegative integers and the coefficient of \(\varepsilon\) is positive. Precisely,
    \[f = \varepsilon^2(am + a + m^2 + 2m) + 2\varepsilon(m+1)(a+m+2)^2 + (m+1)a(a+m+2)^2 \geq 8\varepsilon > 0.\qedhere\]
%
%
%
\end{proof}

The second key ingredient is the following, which allows us to show that an AV-parameterized point is the pullback of an AV-parameterized point.

\begin{lemma}\label{lem:AV_pullback}
    Let \(\pi\colon C \to C_0\) be a finite map of nice curves over a number field \(k\) and let \(y\) be a degree \(e \geq 1\) point on \(C_0\). Assume that \(\pi^*y\) is a \(\PP^1\)-isolated  AV-parameterized point on \(C\) and that there exists a positive-dimensional abelian subvariety \(A\subset  \Pic^0_{C}\) 
    such that \(([\pi^*y] + A) \cap \pi^*W^e_{C_0}(k)\) is Zariski dense in \([\pi^*y] + A\).
    Then there exists an inclusion \(A \hookrightarrow \Pic^0_{C_0}\) whose composition with \(\pi^*\) is the identity and such that \([y] + A \subset  W^{e}_{C_0}\). 
\end{lemma}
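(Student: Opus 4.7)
The approach combines the density hypothesis (producing a translate of a candidate abelian subvariety $B \subset \Pic^0_{C_0}$ inside $W^e_{C_0}$) with the $\PP^1$-isolated hypothesis (forcing the resulting isogeny $B \to A$ to be an isomorphism).

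First, I would unpack the density hypothesis. The set $S \subset A(k)$ consisting of $a$ with $[\pi^*y] + a \in \pi^*W^e_{C_0}(k)$ is Zariski dense in $A$, and for each $a \in S$ one has $a = \pi^*([E_a] - [y])$ for some $[E_a] \in W^e_{C_0}(k)$. Taking Zariski closures yields $A \subset \pi^*\Pic^0_{C_0}$, so I may set $B := (\pi^*)^{-1}(A)^{\circ} \subset \Pic^0_{C_0}$, an abelian subvariety mapping onto $A$ via the isogeny $\pi^*|_B$ with finite kernel $K$. The preimage $[y] + (\pi^*)^{-1}(A) \subset \Pic^e_{C_0}$ is a finite disjoint union of translates of $B$, and pigeonhole (after choosing one $[E_a]$ for each $a \in S$) produces a single component $\kappa + B$ such that $S_1 := \{a \in S : [E_a] - [y] \in \kappa + B\}$ remains Zariski dense in $A$. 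A dimension count then shows $\{[E_a] - [y]\}_{a \in S_1}$ is Zariski dense in $\kappa + B$; since $W^e_{C_0}$ is closed, $[y] + \kappa + B \subset W^e_{C_0}$.

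The crucial step, and the one I expect to require the most care, is to leverage the $\PP^1$-isolated hypothesis to conclude $K = 0$. Upper semicontinuity of $h^0$ combined with $h^0([\pi^*y]) = 1$ produces a dense open subset $U \subset \pi^*W^e_{C_0}$ on which $h^0 = 1$. Over $U$ the effective representative of each divisor class on $C$ is unique, and since $\pi^*$ is injective on effective divisors on $C_0$, no two distinct divisor classes in $W^e_{C_0}$ can share a pullback in $U$. Hence $\pi^*|_{W^e_{C_0}}$ is generically one-to-one onto its image. Restricting this to the subvariety $[y] + \kappa + B \subset W^e_{C_0}$: the restricted map is simultaneously a translate of the isogeny $\pi^*|_B$ (hence generically $|K|$-to-one over $[\pi^*y] + A$) and generically one-to-one (inherited from $W^e_{C_0}$). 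Comparing these generic degrees forces $|K| = 1$.

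With $K = 0$, the isogeny $\pi^*|_B$ is an isomorphism of abelian varieties over $k$. I would then define $\iota\colon A \hookrightarrow \Pic^0_{C_0}$ as the translate of $(\pi^*|_B)^{-1}$ by $\kappa_0 := [E_{a_0}] - [y] - (\pi^*|_B)^{-1}(a_0)$ for any fixed $a_0 \in S_1$; a brief verification shows $\kappa_0 \in \ker\pi^*(k)$ and $\kappa_0 + B = \kappa + B$. Then $\pi^* \circ \iota = \mathrm{id}_A$ and $\iota(A) = \kappa_0 + B$, giving $[y] + \iota(A) \subset W^e_{C_0}$ by the outcome of the second paragraph.
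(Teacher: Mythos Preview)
Your argument is correct and follows a genuinely different route from the paper's. The paper applies Faltings's Subvarieties of AV Theorem to write \(W^e_{C_0}(k)=\bigcup_i (D_i+B_i(k))\), uses irreducibility of \(A\) to trap \([\pi^*y]+A\) inside a single \(\pi^*(D_i+B_i)\), and then uses the \(\PP^1\)-isolated hypothesis to extract \([y]\in D_i+B_i\) and injectivity of \(\pi^*|_{D_i+B_i}\). You bypass Faltings entirely: from the density hypothesis you get \(A\subset\pi^*\Pic^0_{C_0}\) by closure, set \(B=((\pi^*)^{-1}A)^\circ\), use pigeonhole on the finitely many cosets to land a translate \([y]+\kappa+B\) inside \(W^e_{C_0}\), and then compare the degree \(|K|\) of the isogeny \(\pi^*|_B\) with the generic injectivity of \(\pi^*|_{W^e_{C_0}}\) (obtained from upper semicontinuity of \(\hh^0\) at \([\pi^*y]\)) to force \(K=0\). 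This is a real improvement: the lemma becomes independent of deep Diophantine input, and your treatment of the injectivity step via upper semicontinuity is more transparent than the paper's compressed sentence.

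One small simplification for your last paragraph: your shift \(\kappa_0\) is in fact \(0\), so \(\iota=(\pi^*|_B)^{-1}\) is already a homomorphism and \([y]+B\subset W^e_{C_0}\). Indeed, since \(\kappa_0\in\ker\pi^*\) and \(\kappa_0+B=\kappa+B\), the point \([y]+\kappa_0\) lies in \([y]+\kappa+B\subset W^e_{C_0}\) and satisfies \(\pi^*([y]+\kappa_0)=[\pi^*y]\in U\); but \([y]\in W^e_{C_0}\) has the same image in \(U\), so by the injectivity you already established, \([y]+\kappa_0=[y]\).
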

\begin{proof}
    By Faltings's Subvarieties of AV Theorem (Theorem~\ref{thm:Faltings91}), 
    \(W^e_{C_0}(k) = \bigcup_{i=1}^r (D_i + B_i(k)),\)
    where \(B_i \subset  \Pic^0_{C_0}\) is an abelian subvariety and \(D_i + B_i \subset  W^e_{C_0}\).  By hypothesis
    \[\pi^*\left( \bigcup_{i=1}^r (D_i + B_i) \right) \cap ([\pi^* y] + A) \subset  \Pic^{e \cdot \deg \pi}_C\]
    contains a Zariski dense subset of \([\pi^*y] + A\), and therefore contains \([\pi^* y] + A\).  Furthermore, since \(A\) is irreducible, the locus \([\pi^*y] + A\) cannot be covered by finitely many proper closed subvarieties.  Thus there exists a single \(i \in 1, \dots, r\) such that \([\pi^*y] + A \subset  \pi^*(D_i + B_i)\).  
    Since \(\pi^*y\) is \(\PP^1\)-isolated, it has a unique effective representative. Thus \([y] \in D_i + B_i\) and \(\pi^*|_{D_i + B_i}\) is injective.  Therefore \((\pi^*)^{-1}A  \cap B_i \simeq A\) and under this inclusion of \(A\) into \(\Pic^0_{C_0}\) we have \([y] + A \subset  [y] + B_i \subset  W^e_{C_0}\).
\end{proof}

\subsection{Proof of Theorem~\ref{thm:single_source}}

\begin{proof}
    \textbf{\eqref{part:P1param}:} We will construct \(\pi\) and \(C_0\) by an iterative procedure which will eventually terminate. Let \(x\in C\) be a \(\PP^1\)-parameterized point of degree~\(d <  \sqrt{g} + 1\). Let \(X\) be the image of \(C\) under the map \(C\to \PP^{N}\) given by the complete linear system \(|x|\) (so \(N = \hh^0([x]) - 1\)) and let \(\phi\) denote the induced map \(C \to X\). By construction, for any hyperplane section \(z\subset X\), \(\pi^*|z| = |x|\). If \(\hh^0([x]) = 2\), then \(\deg(\phi) = d>1\). If \(\hh^0([x]) > 2\), then \(X\subset \PP^N\) is degree~\(e := d/\deg \phi<(\sqrt{g} + 1)/\deg\phi\) and is birational to its image under a generic projection \(\PP^N \dasharrow \PP^2\). Hence, 
    \begin{equation}\label{eq:genusbound}
        g(X)\leq {e -1 \choose 2} \leq(e-1)^2 < \left(\frac{\sqrt{g} + 1}{\deg\phi}-1\right)^2 \leq g.
    \end{equation}
    Thus \(X\) and \(C\) are not birational, and so \(\phi\) has degree at least \(2\).

    If, for every \(\PP^1\)-parameterized \(y\in C\) with \(\deg(y) < \sqrt{g} + 1\), the image \(z:= \phi(y)\) has degree~\(\deg(y)/\deg(\pi)\) and \(\phi^*|z| = |y|\), then we may take \(C_0 = X\) and \(\pi_0 = \phi\). If not, then there exists a \(y_1\in C\) with \(\deg(y_1)<\sqrt{g} + 1\) such that \(|y_1|\) is not the pullback of a linear system on \(X\). Applying the construction in the previous paragraph to \(y_1\), we obtain a finite map \(\psi\colon C \to Y\) of degree at least \(2\) such that \(|y_1| = \psi^*|\psi(y_1)|\) and such that \(g(Y) < \left( (\sqrt{g} + 1)/\deg\psi - 1\right)^2\) (see~\eqref{eq:genusbound}). By Lemma~\ref{lem:genus_bound}, the product morphism \((\phi, \psi) \colon C \to X\times Y\) factors through a nontrivial map \(\phi_1\colon C \to X_1\) with 
    \[
        2\leq \deg(\phi_1) < \sqrt{g} + 1 \quad \textup{and}\quad g(X_1) < \left(\frac{\sqrt{g} + 1}{\deg(\phi_1)} - 1\right)^2.
    \]
    By construction, any complete linear system on \(C\) which is equal to the pullback of a linear system on \(X\) or \(Y\) (so, in particular, \(|x|\) and \(|y_1|\)) is also equal to the pullback of a linear system from \(X_1\).

    If the map \(\phi_1\) does not have the properties in the theorem, then we may iterate the argument of the previous paragraph with \(\phi_1\) in place of \(\phi\) and a new \(\PP^1\)-parameterized point \(y_2\) in place of \(y_1\). Doing so, we obtain a sequence of maps
    \[
        C\to X_n \to X_{n-1} \to \dots \to X_1 \to X, 
    \]
    each of degree at least \(2\), and where the composition has degree~\(d\). In particular, the process must terminate, and at that point \(C_0 := X_n\) has the desired properties.

    \textbf{\eqref{part:AVparam}:}
    Let \(x\in C\) be an AV-parameterized, \(\PP^1\)-isolated point
    with \(d\colonequals \deg(x) < \frac12(\sqrt{g} + 1)\).  Let \(A\subset \Pic^0_C\) be a positive-rank abelian subvariety such that \([x] + A\subset W^d\) and \(A(k)\) is Zariski dense in \(A\) (cf.~Remark~\ref{rem:Zariski_dense}).  
    By Proposition~\ref{prop:P1isolAVparam}, there exists a finite index subgroup \(H <  A(k)\) such that every point in \([x] + H\) is represented by a degree~\(d\) point. Since \(H <  A(k)\) is finite-index, the points of \(H\) are Zariski dense in \(A\).

    We show in the next paragraph that for any degree~\(d\) point \(z\in C\) that is not a ramification point of \(\pi\) and such that \([z] \in [x] + H\), there is a point \(y_z \in C_0\) of degree~\(d/\deg(\pi)\) such that \(z = \pi^*(y_z)\). Since there are finitely many ramification points and \(\hh^0\leq 1\) holds generically, such points \(z\) that are also \(\PP^1\)-isolated are Zariski dense in \([x] + A\). By Lemma~\ref{lem:AV_pullback}, there is an inclusion \(A \hookrightarrow \Pic^{d/\deg{\pi}}_{C_0}\) such that \(\pi^*|_{A}\) is an isomorphism and \([y_z] + A \subset  W^{d/\deg{\pi}}_{C_0}\).
    Since \([x] \in [z] + A = \pi^*([y_z] + A)\), there exists \([y] \in [y_z] + A(k)\) such that \([x] = [\pi^*(y)]\) and hence \(x = \pi^*y\), since it is \(\PP^1\)-isolated.  The point \(y\) is AV-parameterized by definition.
    
    By Proposition~\ref{prop:DensdegsClosedUnderScalarMult}, there exists a \(\PP^1\)-parameterized point \(w_z\in C\) such that \([w_z]= [2z]\).  By~\eqref{part:P1param}, \(v_z \colonequals \pi(w_z)\in C_0\) is a closed point of degree~\(2d/\deg{\pi}\) and \(\pi^*|v_z| = |w_z| = |2z|\). Hence, there is a degree~\(2d/\deg{\pi}\) morphism \(\varphi \colon C_0 \to \PP^N\) and some hyperplane \(H_z\subset \PP^N\) such that the scheme-theoretic preimage \(\pi^*(\varphi^*(H_z))\) equals \(2z\). Since \(z\) is not a ramification point of \(\pi\), the preimage \(\varphi^*(H_z) \subset C_0\) must be non-reduced with multiplicity \(2\) everywhere and thus \(\pi^*(\varphi^*(H_z)^{\textup{red}}) = z\).  Hence \(y_z \colonequals \varphi^*(H_z)^{\textup{red}} \in C_0\) is the desired degree~\(d/\deg{\pi}\) point.
\end{proof}

\section{Further directions}\label{sec:Future}

In the previous sections, we have endeavored to lay out the foundations of parameterized points, isolated points, and (potential) density degree sets and illustrate their power. The reader interested in exploring more is in luck; there are many possible future directions to investigate. It is impossible to make a comprehensive list of all possible research directions, so in this section, we content ourselves with highlighting merely a selection of them. Similarly, some of these questions intersect with existing active areas of research that would require papers of their own in which to give comprehensive summaries. So the reader should view each section as a snapshot in time and an invitation to explore the literature in that area further, rather than a comprehensive history and summary of the state-of-the-art.

\subsection{Further structure on density degree sets and potential density degree sets}\label{sec:semigroup}

    In Example~\ref{ex:DenDegsNotSemigroup}, we constructed a curve \(C\) where \(\delta_{\PP^1}(C/\Q) + \delta_{AV}(C/\Q)\not\subset \delta(C/\Q)\). However, a crucial component of that example was that the rational points of \(\Pic^0_C(\Q)\) were contained in a lower dimensional subvariety. This containment no longer holds over all finite extensions. Thus the following question remains open.
    \begin{question}
        Let \(C/k\) be a nice curve over a number field. Is \(\potdendegs(C/k)\) a semigroup? 
    \end{question}
    \noindent Example~\ref{ex:DenDegsNotSemigroup} also leaves open the (albeit unlikely) possibility that
    \(\delta_{AV}(C/k)\) is a semigroup.

\subsection{Classification of curves with small minimum potential density degree}
 
Faltings's Curve Theorem gives a geometric classification of curves \(C\) over a number field \(k\) with \(\min(\potdendegs(C/k)) = 1\): they are precisely the curves of genus \(0\) or \(1\).  This raises the question of classifying curves for which \(\min(\potdendegs)\) takes other (small) values. The paradigm of \(\PP^1\)- and \(\AV\)-parameterized points implies the the following geometric description.

\begin{prop}
    We have
    \(\min(\potdendegs(C/k)) = \min\big(\{\gon(C_{\kbar})\}\cup\left\{ 
    d : \Ueno\left((W^d_C)_{\kbar}\right) \neq \emptyset \right\}
    \big).\)
\end{prop}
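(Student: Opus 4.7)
The plan is to set $g_0 \colonequals \gon(C_{\kbar})$ and $u_0 \colonequals \min\{d : \Ueno((W^d_C)_{\kbar}) \neq \emptyset\}$ (with $u_0 = \infty$ if the set is empty) and to establish the two inequalities $\min(\potdendegs(C/k)) \leq \min(g_0, u_0)$ and $\min(\potdendegs(C/k)) \geq \min(g_0, u_0)$ separately.

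For the lower bound, take any $d \in \potdendegs(C/k)$, so $d \in \dendegs(C/k')$ for some finite extension $k'/k$. Corollary~\ref{cor:dendeg_parameterized} supplies a parameterized degree-$d$ point on $C_{k'}$. If it is $\PP^1$-parameterized, a witnessing degree-$d$ morphism $C_{k'} \to \PP^1_{k'}$ forces $d \geq \gon(C_{k'}) \geq g_0$. If it is AV-parameterized, the witnessing translate $[x] + A \subset W^d_C$ (with $A$ positive-rank, hence positive-dimensional) base changes to $\kbar$ to show $[x] \in \Ueno((W^d_C)_{\kbar})$, so $d \geq u_0$. Either way, $d \geq \min(g_0, u_0)$.

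For the upper bound, I would exhibit $\min(g_0, u_0) \in \potdendegs(C/k)$. If $g_0 \leq u_0$, any degree-$g_0$ morphism $C_{\kbar} \to \PP^1_{\kbar}$ descends to a finite extension $k'/k$ and Proposition~\ref{prop:hilbert-curve} yields $g_0 \in \dendegs(C/k')$. When $u_0 < g_0$, fix a witness $[D] + A \subset (W^{u_0}_C)_{\kbar}$ with $A \subset \Pic^0_{C, \kbar}$ a positive-dimensional abelian subvariety and enlarge $k$ to a finite extension $k'$ over which $[D]$ and $A$ are defined and $A(k')$ has positive rank. The hypothesis $u_0 < g_0$ forces $h^0([D']) = 1$ for every $[D'] \in [D] + A$: otherwise, removing the basepoints of $|D'|$ and (if $h^0 \geq 3$) applying Lemma~\ref{lem:projection} would produce a morphism $C_{\kbar} \to \PP^1_{\kbar}$ of degree at most $u_0 < g_0$, contradicting the gonality. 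By Lemma~\ref{lem:BrSev} combined with Lemma~\ref{lem:FiniteIndexEffective}, a finite-index subgroup $B \subset A(k')$ then yields infinitely many effective $k'$-rational representatives $E_{[D']}$, one for each $[D'] \in [D] + B$.

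The main obstacle is showing that infinitely many of these $E_{[D']}$ are \emph{integral} rather than merely reducible effective divisors. I would argue by contradiction using the minimality of $u_0$: if infinitely many $E_{[D']}$ are reducible, pigeonhole over the finitely many proper partitions $\bd = (d_1, \dots, d_r) \vdash u_0$ produces an infinite subset of $s \in B$ along which $E_{[D] + s}$ decomposes into closed points with a fixed degree profile $\bd$; these decompositions form an infinite subset of $W^{\bd}_C(k') \subset \Pic^{\bd}_C(k')$. Applying Faltings's Subvarieties of AV Theorem (Theorem~\ref{thm:Faltings91}) to $W^{\bd}_C$ expresses $W^{\bd}_C(k')$ as a finite union of translates $D^{(j)} + B_j(k')$ with $D^{(j)} + B_j \subset W^{\bd}_C$; a further pigeonhole forces some $B_j \subset \Pic^{0, \bd}_C$ to be positive-dimensional. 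Because $B_j$ injects into $\prod_i \Pic^0_C$ via its coordinate projections, at least one projection $\pi_i(B_j) \subset \Pic^0_C$ is positive-dimensional, and the corresponding translate lies inside $W^{d_i}_C$. This exhibits a nonempty Ueno locus on $(W^{d_i}_C)_{\kbar}$ with $d_i < u_0$, contradicting the minimality of $u_0$. Hence all but finitely many $E_{[D']}$ are degree-$u_0$ closed points, giving $u_0 \in \dendegs(C/k') \subset \potdendegs(C/k)$.
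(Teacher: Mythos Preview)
Your argument is correct, and the overall structure---lower bound via parameterized points, upper bound by exhibiting the minimum in some \(\dendegs(C/k')\)---matches the paper's. The divergence is in the upper-bound case \(u_0 < g_0\), where you must show that infinitely many of the effective representatives \(E_{[D']}\) are irreducible.

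The paper's argument at this step is shorter: having already established the lower bound \(\min(\potdendegs(C/k)) \geq u_0\), it observes that for every finite extension \(k'\) and every \(d' < u_0\) one has \(d' \notin \dendegs(C/k')\), so \(\Sym^{d'}_C(k')\) is finite. Since \(\Sym^{u_0}_C(k')\) is infinite and, by Lemma~\ref{lem:image_summation_reducible}, only finitely many of its points can lie in \(\sigma_{\bd}(\Sym^{\bd}_C(k'))\) for nontrivial \(\bd\), infinitely many must correspond to degree-\(u_0\) points. This reuse of the lower bound is the trick that keeps the paper's proof brief; in particular the paper never needs to establish \(h^0=1\) along the translate or to invoke Faltings a second time.

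Your route---applying Faltings to \(W^{\bd}_C\), projecting a positive-dimensional \(B_j\) into some \(W^{d_i}_C\), and contradicting the minimality of \(u_0\)---is a valid alternative that does not lean on the already-proved lower bound and is in that sense more self-contained. It does require the small extra observation that the infinitely many distinct \([D']\) yield infinitely many distinct points of \(W^{\bd}_C(k')\) (true because the summation map \(W^{\bd}_C \to W^{u_0}_C\) separates them), together with the preliminary \(h^0=1\) step. Both approaches ultimately rest on the same Faltings input; yours unwinds it explicitly where the paper packages it into the lower-bound half of the argument. (Incidentally, once you have \(h^0=1\) on all of \([D]+A\), Lemma~\ref{lem:BrSev} alone already lifts every \([D']\in [D]+A(k')\) to \(\Sym^{u_0}_C(k')\); the appeal to Lemma~\ref{lem:FiniteIndexEffective} is not needed.)
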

\begin{proof}
    Let \(d\) be the quantity on the right. 
    Theorem~\ref{thm:InfinitudeDegreed} over finite extensions \(k'/k\) implies that 
    \(\min(\potdendegs(C/k))\geq d\).
    For the reverse inequality, 
    if \(d = \gon(C_{\kbar})\), then there exists a finite extension \(k'/k\) for which \(d \in \dendegs_{\PP^1}(C/{k'}) \subset  \potdendegs(C/k)\).
    We may therefore assume that \(d < \gon(C_{\kbar})\) is the smallest positive integer for which \(\Ueno\left(({W_C}^d)_{\kbar}\right) \neq \emptyset\).  
    Then there exists a finite extension \(k'/k\) and an abelian translate \(A \subset  \Ueno\left(({W_C}^d)_{k'}\right)\) with \(\rk(A(k')) > 0\).  Thus \(\Sym^d_C(k')\) is infinite but, since \(\min(\dendegs(C/k')) \geq \min(\potdendegs(C/k)) \geq d\), \(\Sym^{d'}_C(k')\) is finite for all \(d' < d\).  Thus \(d \in \dendegs(C/{k'}) \subset  \potdendegs(C/k)\) by Lemma~\ref{lem:image_summation_reducible}.
\end{proof}

The task is to make this explicit.
The results of Harris--Silverman \cite{HS-Degree2} and Abramovich--Harris \cite{AH-Degree3And4} quoted in Section~\ref{sec:ConstructionCoversOfEllipticCurves} give an explicit classification when \(\min(\potdendegs(C/k)) = 2\) or \(3\): over~\(\kbar\) such curves \(C\) are always degree~\(2\) or \(3\) covers of curves of genus \(0\) or \(1\). 
On the other hand, Debarre--Fahlaoui give examples that show that \(\min(\potdendegs(C/k))\) can equal \(4\) even when \(C_{\kbar}\) is not a degree~\(4\) cover of a curve of genus at most \(1\)~\cite{DF-CounterEx}.  In \cite{Kadets--Vogt}, Kadets and the second author extend Harris--Silverman's and Abramovich--Harris's classification results to \(d \in\{4,5\}\): the only curves with \(\min(\potdendegs(C/k)) = d\) without a degree~\(d\) map to a curve of genus at most \(1\) are the examples that Debarre--Fahlaoui constructed!  It is an intriguing problem to extend this to \(d > 5\), where, presumably, examples beyond those constructed by Debarre--Fahlaoui can be found.

\subsection{The number of isolated points}

Given that the set of isolated points is finite, it is natural to ask how large this set can be.  By interpolation, we can always obtain the existence of a curve of genus at least~\(2\) with arbitrarily many rational points, which, by Faltings's Curve Theorem, are necessarily isolated.  However, interpolation yields a curve whose genus grows with the number of rational points.  Thus, a better question is how large the set of isolated points can be on curves \emph{of fixed genus.}

\subsubsection{Towards a uniform (upper) bound}
	Recent work of Dimitrov--Gao--Habegger~\cite{DGH-UniformMordell} combined with work of K\"uhne~\cite{Kuhne} gives a bound on the number of rational points of a curve of genus \(g\) that depends only on \(g\) and the rank of the Jacobian (see~\cite{Gao} for a survey of how the works come together).  Further work of Gao--Ge--K\"uhne gives a uniform version of the Mordell--Lang conjecture, which yields the following bound on non-Ueno isolated points.
\begin{thm}[Corollary of \cite{GGK-UniformMordellLang}*{Theorem 1.1}]\label{thm:Uniform}
    Let \(C\) be a smooth genus \(g\) curve over a number field \(k\).  Then the number of non-Ueno isolated points is bounded by a constant that depends only on \(g\) and the rank of \(\Pic^0_C(k)\).
\end{thm}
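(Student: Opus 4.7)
The plan is to reduce the theorem to bounding the image of the non-Ueno isolated points in \(\Pic^d_C\) for each \(d\), then apply the Gao--Ge--K\"uhne uniform Mordell--Lang theorem fiberwise in the universal family of curves of genus \(g\).

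First I would bound the relevant degrees. By Corollary~\ref{cor:HighDegreeP1Param}, every closed point of degree at least \(g+1\) is \(\PP^1\)-parameterized and hence not isolated, so it suffices to bound, for each fixed \(1 \leq d \leq g\), the number of non-Ueno isolated points of degree \(d\). Next I would pass from the points themselves to their images under the degree \(d\) Abel--Jacobi map: since isolated points are in particular \(\PP^1\)-isolated, Corollary~\ref{cor:BijP1isolated} says that the Abel--Jacobi map is injective on the set of degree \(d\) \(\PP^1\)-isolated points, so it suffices to bound the cardinality of the image of the non-Ueno isolated points in \(W^d_C(k) \subset \Pic^d_C(k)\).

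The key geometric observation is that a rational point \(w \in W^d_C(k)\) lying in the image of a degree \(d\) non-Ueno isolated point cannot be contained in any positive-dimensional abelian translate \(w' + B \subset W^d_C\): if it were, then either \(B\) has positive rank, contradicting AV-isolatedness, or \(B\) has rank zero and the point is Ueno by Definition~\ref{def:Ueno}. In other words, such points are precisely the \defi{isolated} rational points of the closed subvariety \(W^d_C \subset \Pic^0_C\) in the sense of the Mordell--Lang problem (i.e., those \(w_i\) in the decomposition of Faltings's Subvarieties of AV Theorem for which the corresponding abelian subvariety \(A_i\) is trivial).

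Now I would invoke the Gao--Ge--K\"uhne uniform Mordell--Lang theorem \cite{GGK-UniformMordellLang}*{Theorem 1.1} applied to the family \(W^d \hookrightarrow \Pic^0\) over the moduli space \(\calM_g\) (using a suitable universal family, possibly after finite \'etale base change to rigidify). Their theorem gives a bound on the number of isolated rational points in a subvariety of an abelian variety that depends only on the geometric data of the family (here encoded by \(g\) and \(d \leq g\)) and the rank of the ambient Mordell--Weil group (here \(\rk \Pic^0_C(k)\)). Summing the resulting bound over \(d = 1, \ldots, g\) yields a constant depending only on \(g\) and \(\rk \Pic^0_C(k)\), as required. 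The main technical obstacle is verifying that the hypotheses of \cite{GGK-UniformMordellLang}*{Theorem 1.1} are met by the Abel--Jacobi family \(W^d \subset \Pic^0 \to \calM_g\); once this is in place (as explained in the acknowledgements, this was the subject of conversations with Ziyang Gao), the reduction above is essentially formal and the bound follows.
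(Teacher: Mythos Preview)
Your reduction is exactly right: bound the degree, inject into \(W^d_C(k)\) via Corollary~\ref{cor:BijP1isolated}, and observe that the image of a non-Ueno isolated point must lie in a translate \(w_i + A_i\) with \(A_i\) trivial in the Faltings decomposition. Where you diverge from the paper is in how you extract uniformity from \cite{GGK-UniformMordellLang}*{Theorem 1.1}. You propose working in the universal family \(W^d \hookrightarrow \Pic^0\) over \(\calM_g\) and applying the theorem fiberwise; you even flag the verification of the hypotheses in families as the main outstanding technical point. The paper avoids this entirely. The constant \(c\) in \cite{GGK-UniformMordellLang}*{Theorem 1.1} depends on the dimension of the ambient abelian variety and the degree of the subvariety with respect to an ample class. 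For a single curve \(C\) of genus \(g\), Poincar\'e's Formula \cite{ACGH}*{Chapter I, Section 5} computes
\[
\deg_\theta W^d_C \;=\; \frac{g!}{(g-d)!},
\]
which depends only on \(g\) and \(d\). So one applies Gao--Ge--K\"uhne directly to the fixed inclusion \(W^d_C \subset \Pic^d_C\) with no moduli-space machinery: the input data already depend only on \(g\) and \(d\), hence so does \(c\), and the bound \(N \leq c^{1 + \rk \Pic^0_C(k)}\) on the number of translates (and in particular on those with \(A_i\) trivial) finishes the argument. This is the ``reference in \cite{ACGH}'' alluded to in the acknowledgements.

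One minor refinement you could add: since \(W^g_C = \Pic^g_C\), every degree \(g\) point is Ueno, so one only needs \(1 \leq d \leq g-1\). This does not affect correctness but tightens the range.
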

\begin{proof}
    By Corollary~\ref{cor:HighDegreeP1Param}, any isolated point has degree at most \(g\). Further, every point of degree~\(g\) is Ueno (Definition~\ref{def:Ueno}). Thus, it suffices to show that for each \(d < g\), the number of non-Ueno isolated points of degree~\(d\) is bounded by a constant that depends only on \(g, d\) and the rank of \(\Pic^0_C(k)\).

    Let \(\theta\) denote an (ample) theta divisor on \(\Pic^0_C\). By  Poincar\'e's Formula~\cite{ACGH}*{Chapter I, Section 5}, 
    \[
    \deg_{\theta} W_C^d = \deg_{\theta}\left(\frac{\theta^{g-d}}{(g-d)!}\right) = \frac{\theta^{g}}{(g-d)!} = \frac{g!}{(g-d)!}.
    \]
    Thus the degree \(\deg_\theta W^d_C\) is a function of \(g\) and \(d\) alone.
    
    The rational points on \(\Sym^d_C\) corresponding to isolated degree~\(d\) points on \(C\) inject into the rational points of \(W_C^d\), so it suffices to show that the images of the non-Ueno isolated points in \(W_C^d(k)\) are bounded by a constant that depends only on \(g, d\) and the rank of \(\Pic^0_C(k)\).  By Faltings's Subvarieties of AV Theorem and~\cite{GGK-UniformMordellLang}*{Theorem 1.1}, there exists 
    \begin{enumerate}
        \item a constant \(c=c(g,d)\),\footnote{In general, in \cite{GGK-UniformMordellLang}*{Theorem 1.1}, this constant depends on the degree of \(W_C^d\), but, as we have observed, \(\deg_{{\theta}} W_C^d\) is a function of \(g\) and \(d\).}
        \item a positive integer \(N \leq c^{1 + \rank \Pic^0_C(k)}\), and 
        \item for each \(1\leq i\leq N\), a point \(x_i\in \Pic^d_C(k)\) and abelian subvarieties \(B_i\subset  \Pic^0_C\) with \(x_i + B_i \subset  W_C^d\),
    \end{enumerate}
    such that
    \(W_C^d(k) = \bigcup_{i = 1}^N (x_i + B_i(k))\).
    Since, by definition, the image of the non-Ueno points must be contained in the union of \(x_i + B_i(k)\) where \(B_i\) is \(0\)-dimensional (and hence \(B_i\) equals the identity), the number of images of the non-Ueno points is bounded by \(N\leq c^{1 + \rank \Pic^0_C(k)}\). 
\end{proof}

Any Ueno isolated point \(x\) of degree~\(d\) lies on a translate of a positive dimensional rank \(0\) abelian variety \(A\subset  \Pic^0_C\) such that \(x + A\subset  W_C^d\).  The theorem of Gao--Ge--K\"uhne bounds the number of such abelian varieties by a constant that depends on \(g,d,\) and \(\rank \Jac_C(k)\).  Since any isolated point has \(d\leq g\), this implies that the number of Ueno isolated points is bounded depending only on \(g\) and \(\rank \Jac_C(k)\) if the size of \(k\)-rational torsion on abelian varieties of dimension at most \(g\) is bounded by a constant that depends only on \(g\) and \(\rank \Jac_C(k)\).
The existence of such a uniform bound remains open.

\subsubsection{Curves with many isolated points}\
    On a curve of genus at least \(2\), all rational points are isolated points. Thus, curves with record breaking numbers of rational points also give curves with large numbers of isolated points. The current record is a genus \(2\) curve \(C/\Q\) with 642 \(\Q\)-points, found by Michael Stoll~\cite{MS-canonicalhts}. Since \(\rk\Jac(C)=22\), the set of isolated points is exactly the set \(C(\Q)\) by Lemma~\ref{lem:AVDensityDegreeLarge}.
    However, for genus at least \(3\), it seems possible that the record for number of isolated points is larger than the number of rational points.

    To compute isolated points, one first needs to compute the set of rational points on \(W_C^d\). If the Jacobian of the curve has rank \(0\), then computing the rational points on \(W_C^d\) can be deduced from a description of rational points on \(\Pic^0_C\) using Riemann--Roch spaces. If the Jacobian of the curve has positive rank, then this is a much more difficult problem, which is an active area of current research. Many current techniques rely on \defi{symmetric Chabauty}; see~\cite{Siksek} for more details.
    
\subsubsection{Average number of isolated points} The previous discussions about the number of isolated points focus on extreme behavior. It is also important to understand the typical or average behavior, which brings us to the realm of arithmetic statistics. Recent work of Laga and Swaminathan shows that for a fixed \(g\geq 4\), a positive proportion of hyperelliptic curves with a rational Weierstrass point have no \(\PP^1\)-isolated non-Weierstrass points~\cite{LS-UnexpectedQuadratic}.

    \subsection{Arithmetic properties of parameterized points}
While the density degree set \(\dendegs(C/k)\) gives some arithmetic information about the curve, one can ask for more refined information. For example, given \(d\in \dendegs(C/k)\), one can ask which Galois groups of degree \(d\) points appear infinitely often, or which local splitting behaviors occur infinitely often. These questions are just beginning to be explored~\cites{KS-primitive, KS-singlesource, Derickx-LargeDegree, Derickx-SingleSource, Rawson-cyclic, BCLV}. The intersection of these questions is intimately connected to active and long-running research areas motivated by the inverse Galois problem and refinements, such as the Grunwald problem, the Beckmann-Black problem, and the parameterization problem; see, e.g.,~\citelist{\cite{Wittenberg-PCMI}; \cite{Debes-BeckmannBlack}; \cite{KN-HGspecialization}, \cite{JLY}} respectively and the references therein for further details.

\appendix

\section{Hilbert's Irreducibility Theorem and extensions}\label{app:HilbertIrred}
  
In this appendix, we outline the proof of the following proposition.
    \begin{prop}\label{prop:hilbert}
        Let \(X\) be an irreducible variety of dimension \(n \geq 1\) defined over a number field \(k\), and suppose that there exists a dominant map \(\pi \colon X \to \PP_k^n\), which is generically of degree~\(d\).  Then there exists a Zariski dense subset of points \(t \in \PP^n(k)\) such that \(\pi^{-1}(t)\) is a degree~\(d\) point on \(X\).  
        In particular, \(d \in \dendegs(X/k)\).
    \end{prop}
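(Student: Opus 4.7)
The plan is to derive this from the classical Hilbert Irreducibility Theorem over number fields, combined with the symmetric product machinery of Section~\ref{sec:HilbSym}. First, I would reduce to the case where \(\pi\) is finite \'etale over a dense open: by generic flatness and generic smoothness, there is a dense open \(U \subseteq \PP^n_k\) such that \(V \colonequals \pi^{-1}(U) \to U\) is finite \'etale of degree~\(d\). Since \(\PP^n \setminus U\) is closed of strictly smaller dimension, it suffices to produce a Zariski dense set of \(t \in U(k)\) for which \(\pi^{-1}(t)\) is a degree~\(d\) point of \(X\).

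Next, I would translate integrality of the fiber into the language of Section~\ref{sec:HilbSym}. By the functor-of-points description in Remark~\ref{rmk:HilbFunctor}, the finite flat family \(V \to U\) corresponds to a morphism \(\phi \colon U \to \Hilb^d_X\), and composing with the Hilbert--Chow morphism~\eqref{eq:HilbChow} gives \(\tilde\phi \colon U \to \Sym^d_X\). By Lemma~\ref{lem:image_summation_reducible}, \(\pi^{-1}(t)\) is a degree~\(d\) point exactly when \(\tilde\phi(t) \in \Sym^d_X(k)\) avoids \(\sigma_{\bd}\bigl(\Sym^{\bd}_X(k)\bigr)\) for every nontrivial partition \(\bd \vdash d\). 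Setting \(B_{\bd} \colonequals \tilde\phi^{-1}\bigl(\sigma_{\bd}(\Sym^{\bd}_X)\bigr) \subsetneq U\), the goal becomes to show that \(U(k) \setminus \bigcup_{\bd \vdash d,\, \bd \neq (d)} B_{\bd}(k)\) is Zariski dense in \(U\).

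The core arithmetic input is the classical Hilbert Irreducibility Theorem: for a number field \(k\) and a finite \'etale cover \(V \to U \subseteq \PP^n_k\) with \(V\) irreducible, the set of \(t \in U(k)\) at which the fiber remains irreducible is the complement of a \emph{thin} subset of \(\PP^n(k)\), and the complement of a thin subset of \(\PP^n(k)\) is Zariski dense. The standard proof reduces to \(n = 1\) by restricting to generic \(k\)-rational lines in \(\PP^n\), and then analyzes the reducibility locus on these lines using Puiseux expansions around the branch points together with either Siegel's theorem on integral points of affine curves or elementary height estimates. The main obstacle is precisely this arithmetic input---the Hilbertian property of number fields; the scheme-theoretic reductions above are formal. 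Once it is granted, the conclusion \(d \in \dendegs(X/k)\) is immediate from the definition of the density degree set.
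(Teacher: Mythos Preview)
Your overall strategy is correct and essentially the same as the paper's: reduce to a finite \'etale cover over a dense open of \(\PP^n\), then invoke the Hilbertian property of number fields to conclude that the locus where the fiber stays irreducible (equivalently, is a degree~\(d\) point) has Zariski dense \(k\)-points. The paper carries this out by passing to the Galois closure \(Y \to X \to \PP^n\) and applying the specialization-of-Galois-groups lemma to produce a thin exceptional set, whereas you invoke Hilbert irreducibility for the cover \(V \to U\) directly as a black box; these are two packagings of the same argument.

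There is, however, a genuine error in your middle paragraph. The subvariety \(B_{\bd} \colonequals \tilde\phi^{-1}\bigl(\sigma_{\bd}(\Sym^{\bd}_X)\bigr)\) is \emph{never} a proper subvariety of \(U\): for every partition \(\bd \vdash d\) the summation morphism \(\sigma_{\bd} \colon \Sym^{\bd}_X \to \Sym^d_X\) is surjective on the level of schemes (any unordered \(d\)-tuple of geometric points can be grouped according to \(\bd\)), so \(B_{\bd} = U\) and \(U(k) \setminus \bigcup_{\bd} B_{\bd}(k) = \emptyset\). Lemma~\ref{lem:image_summation_reducible} characterizes degree~\(d\) points as the complement of \(\bigcup_{\bd} \sigma_{\bd}(\Sym^{\bd}_X(k))\), i.e.\ the images of \(k\)-\emph{points}, not the \(k\)-points of the image \emph{variety}; these differ precisely because \(\sigma_{\bd}\) need not be surjective on \(k\)-points. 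So your proposed reformulation of the goal is vacuous.

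Fortunately this detour through \(\Sym^d_X\) is unnecessary: once you are over the \'etale locus, ``fiber is a degree~\(d\) point'' is exactly ``fiber is irreducible'', and your final paragraph already handles this directly via Hilbert irreducibility. You should simply delete the middle paragraph (or, if you want to keep the symmetric-product language, replace \(B_{\bd}\) by the \emph{set} \(\tilde\phi^{-1}\bigl(\sigma_{\bd}(\Sym^{\bd}_X(k))\bigr) \subset U(k)\), which is then genuinely thin).
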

    
    The standard approach to proving Proposition~\ref{prop:hilbert} is based on Galois theory ideas discussed in Section~\ref{subsec:GaloisTheory}: if you can show that for a Zariski dense subset of \(Y(k)\), the fiber has Galois group which is a transitive subgroup of \(S_d\), then it is a degree~\(d\) point. 

    We give a brief overview of the ingredients going into the proof of Proposition~\ref{prop:hilbert}, with references to where the interested reader can find more details. Then we show how to use the same approach to prove Proposition~\ref{prop:Hilbert_E}.

    \begin{defn}[cf.~\cite{SerreMW}*{Section 9.1}]
        
    A subset \(\Omega \subset \PP^n(F)\) is called \defi{thin} if it is contained in a finite
    union of subsets of the following two types:
    \begin{enumerate}
        \item (\defi{type 1 thin set}) \(V(F)\) for \(V \subset \PP_F^n\) a proper closed subvariety,
        \item (\defi{type 2 thin set}) \(\pi(X(F))\) for \(X\) an irreducible variety and \(\pi \colon X \to \PP_F^n\) a dominant map of degree at least~\(2\).
    \end{enumerate}  
    \end{defn}
	\noindent The following result on specialization of Galois groups shows the relevance of thin sets to Proposition~\ref{prop:hilbert}.
    
    \begin{lemma}[Specialization of Galois groups (cf.~\cite{SerreMW}*{Section 9.2})]\label{lem:Gal_specialization}
        Let \(Y/F\) be an irreducible variety and let \(\pi \colon Y \to \PP^n\) be a dominant map with \(\kk(Y)/\kk(\PP^n)\) a finite Galois extension.  There is a thin subset \(\Omega \subset \PP^n(F)\), such that for all \(t \in \PP^n(F)\smallsetminus \Omega\), and any closed point \(x \in \pi^{-1}(t)\), the extension \(\kk(x)/F\) is Galois with Galois group \(\Gal(\kk(Y)/\kk(\PP^n))\).
    \end{lemma}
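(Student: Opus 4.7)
The plan is to realize \(\pi\) as a \(G\)-torsor over a suitable dense open of \(\PP^n\) and then analyze when the Galois action on fibers fails to be transitive. Set \(G := \Gal(\kk(Y)/\kk(\PP^n))\). Since \(\kk(Y)/\kk(\PP^n)\) is Galois, the action of \(G\) on \(\kk(Y)\) spreads out to a free action of \(G\) on \(Y\) over some dense open of \(\PP^n\); by shrinking further I can find a dense open \(U \subset \PP^n\) such that \(\pi^{-1}(U) \to U\) is a finite étale \(G\)-torsor. Then \((\PP^n \smallsetminus U)(F)\) is automatically a type 1 thin set.

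For any \(t \in U(F)\), the fiber \(\pi^{-1}(t)\) is a \(G\)-torsor over \(\Spec F\), hence (after choosing a geometric point in it) is classified up to \(G\)-conjugation by a continuous homomorphism \(\rho_t \colon G_F \to G\). Under the induced identification \(\pi^{-1}(t)(\Fbar) \cong G\) as a right \(G\)-set, the Galois group \(G_F\) acts through \(\rho_t\) by left multiplication. Closed points of \(\pi^{-1}(t)\) correspond to \(\rho_t(G_F)\)-orbits on \(G\), and the residue field at any such closed point is the fixed field of the stabilizer of a chosen representative. In particular, \(\pi^{-1}(t)\) consists of a single closed point whose residue field is Galois over \(F\) with group \(G\) if and only if \(\rho_t\) is surjective.

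It therefore suffices to show that the set of \(t \in U(F)\) for which \(\rho_t\) fails to be surjective is thin. If \(\rho_t(G_F)\) is a proper subgroup of \(G\), then it is contained in some maximal proper subgroup \(H < G\), and consequently \(\rho_t(G_F)\) fixes a coset in \(G/H\). Translating back, this says that the fiber above \(t\) of the intermediate cover \(\pi_H \colon Y/H \to \PP^n\) contains an \(F\)-rational point. The function field \(\kk(Y)^H\) defines an irreducible variety \(Y/H\) (its existence as a geometric quotient follows since \(H\) is finite acting on a quasi-projective variety) which maps to \(\PP^n\) via a dominant morphism of generic degree \([G:H] \geq 2\), so \(\pi_H((Y/H)(F))\) is a type 2 thin set. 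Since \(G\) is finite and has only finitely many conjugacy classes of maximal proper subgroups, taking the union of the corresponding finitely many type 2 thin sets together with the type 1 thin set \((\PP^n \smallsetminus U)(F)\) produces a thin set \(\Omega\) with the stated property.

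The main technical inputs are standard: spreading out the \(G\)-action to obtain a torsor over some \(U\), and the tautological identification of \(G\)-torsors over \(\Spec F\) with conjugacy classes of continuous homomorphisms \(G_F \to G\). The main subtlety to keep straight is the dictionary between subgroup inclusions and intermediate covers — specifically, the equivalence of ``\(\rho_t(G_F) \subseteq gHg^{-1}\) for some \(g \in G\)'' with ``\((Y/H)_t\) has an \(F\)-rational point''. Once that correspondence is in hand, the conclusion is a finite-union argument using the two basic closure properties of thin sets.
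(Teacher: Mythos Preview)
Your proof is correct and follows essentially the same approach as the paper's sketch: the key point in both is that the locus of \(t \in \PP^n(F)\) where the specialized Galois group lies in a proper subgroup \(H \subsetneq G\) is contained in the image of \((Y/H)(F)\) under the quotient map \(Y/H \to \PP^n\), which is a type 2 thin set. You have simply spelled out the details (spreading out to a \(G\)-torsor, the dictionary between \(\rho_t\) and rational points on intermediate quotients) that the paper leaves implicit.
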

    The idea of the proof of this lemma is that the locus in \(\PP^n(F)\) over which the Galois group is a proper subgroup \(H \subsetneq \Gal(\kk(Y)/\kk(\PP^n))\) is contained in the image of the \(F\)-points of \(Y/H\) under the quotient map \(Y/H \to \PP^n\).
    
    \begin{defn}
    A field \(F\) is called \defi{Hilbertian} if for \(n\geq 1\), the set \(\PP^n(F)\) itself is \emph{not} a thin set.  (This is equivalent to requiring only that \(\PP^1(F)\) is not thin.) 
    \end{defn}
            Fields \(F\) for which \(F^\times/F^{\times2}\) is finite, like algebraically closed fields, finite fields, or local fields, are not Hilbertian, since \(\PP^1(F)\) is contained in a finite union of type 2 thin sets coming from \(\PP^1 \xrightarrow{t \mapsto a t^2} \PP^1\) for \(a \in F^\times/F^{\times2}\).

    \begin{lemma}\label{lem:comp_thin_dense}
        Suppose that \(F\) is Hilbertian and that \(\Omega \subset \PP^n(F)\) is a thin set.  Then \(\PP^n(F) \smallsetminus \Omega\) is Zariski dense in \(\PP^n\).
    \end{lemma}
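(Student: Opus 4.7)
My plan is to argue by contradiction and leverage the definition of Hilbertian directly. Suppose that \(\PP^n(F) \smallsetminus \Omega\) fails to be Zariski dense in \(\PP^n\). Then its Zariski closure is contained in some proper closed subvariety \(V \subsetneq \PP^n\). The strategy is to show that under this assumption \(\PP^n(F)\) itself would be thin, contradicting the hypothesis that \(F\) is Hilbertian.

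The first step is to observe that since \(V\) is a proper closed subvariety of \(\PP^n\), the set \(V(F)\) is by definition a type~1 thin set. The next step is the set-theoretic identity
\[
\PP^n(F) \;=\; \bigl(\PP^n(F) \cap \Omega\bigr) \cup \bigl(\PP^n(F) \smallsetminus \Omega\bigr) \;\subset\; \Omega \cup V(F),
\]
where the inclusion \(\PP^n(F) \smallsetminus \Omega \subset V(F)\) uses our contradictory assumption.

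The final step is to note that a finite union of thin sets is thin (this is immediate from the definition, since a thin set is already a finite union of sets of type~1 or type~2). Hence \(\Omega \cup V(F)\) is thin, and by the inclusion above, \(\PP^n(F)\) is contained in a thin subset of itself, i.e., \(\PP^n(F)\) is thin. This directly contradicts the assumption that \(F\) is Hilbertian, completing the proof.

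There is essentially no obstacle here: once the definitions are unpacked, the lemma is a one-line consequence of the fact that the complement of a Zariski dense subset is cut out by a proper closed subvariety (whose rational points form a type~1 thin set) together with the closure of thin sets under finite unions.
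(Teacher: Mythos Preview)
Your proof is correct and follows essentially the same approach as the paper's: both argue that if \(\PP^n(F)\smallsetminus\Omega\) were contained in a proper closed subvariety \(V\), then \(\PP^n(F)\subset V(F)\cup\Omega\) would be thin, contradicting the Hilbertian hypothesis. The paper simply compresses this into a single sentence.
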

    \begin{proof}
        The set \(\PP^n(F) \smallsetminus\Omega\) cannot be contained in a proper closed subvariety \(V \subsetneq \PP^n\), since \(V(F) \cup \Omega\) is a thin set.
    \end{proof}
    
    \begin{thm}
    \label{thm:numfield_hilbertian}
        A number field \(k\) is Hilbertian.
    \end{thm}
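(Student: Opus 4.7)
The plan is to reduce to a concrete statement about irreducibility of specializations of polynomials in $\Q[t,x]$, and then to prove that statement by an explicit counting argument that uses the arithmetic of integer points on curves.

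\textbf{Reduction to dimension one and to $\Q$.} First I would reduce to the case $n=1$. If $\mathbb{P}^1(k)$ is non-thin, then given a putative thin covering of $\mathbb{P}^n(k)$, one can restrict to a generic rational line $\mathbb{P}^1 \hookrightarrow \mathbb{P}^n$ defined over $k$: pulling back type 1 thin sets yields type 1 thin sets on $\mathbb{P}^1$, while pulling back a type 2 thin set coming from $\pi \colon X \to \mathbb{P}^n$ yields a type 2 thin set on $\mathbb{P}^1$ coming from the base change of $\pi$ to the line (which remains dominant and of degree $\geq 2$ for generic choice of line, by Bertini-type irreducibility results). I would also reduce to the case $k=\Q$: if $k$ is a number field and $\Omega \subset \mathbb{P}^1(k)$ is thin, then by viewing $\mathbb{P}^1_k$ as a $k$-variety and taking the Weil restriction, one shows that $\Omega \cap \mathbb{P}^1(\Q)$ is a thin subset of $\mathbb{P}^1(\Q)$, so non-thinness of $\mathbb{P}^1(\Q)$ suffices.

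\textbf{Translation into polynomial irreducibility.} Next I would translate the claim into the following concrete statement: given irreducible polynomials $f_1(t,x),\dots,f_r(t,x) \in \Z[t,x]$, each of degree $\geq 2$ in $x$ and irreducible in $\Q(t)[x]$, there exist (in fact infinitely many) integers $t_0 \in \Z$ such that every $f_i(t_0,x) \in \Q[x]$ remains irreducible. A type 1 thin set in $\mathbb{P}^1(\Q)$ is just a finite set, and by clearing denominators and resolving the cover $X_i \to \mathbb{P}^1$ from a type 2 thin set, the condition $t_0 \in \pi_i(X_i(\Q))$ outside a finite set forces $f_i(t_0,x)$ to acquire a rational root, and more generally a factor — so non-thinness reduces to the irreducibility statement above.

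\textbf{The counting step (the hard part).} The main obstacle is showing that the set of $t_0 \in \Z$ with $|t_0| \leq N$ for which some $f_i(t_0,x)$ factors has density zero. The standard route is via Puiseux/Eisenstein expansions at $t=\infty$: the roots $x = x(t)$ of $f_i(t,x)=0$ admit convergent Puiseux series $x = \sum_{j \geq -M} a_j t^{-j/e}$ with $e > 1$ (since $f_i$ is irreducible in $\Q(t)[x]$ of degree $\geq 2$ and its Galois group over $\Q(t)$ is transitive). If $f_i(t_0,x)$ has a rational root for integer $t_0$, that root must be approximated to high accuracy by such a series, and one can bound the number of integers $t_0$ with $|t_0| \leq N$ admitting such an approximation by $O(N^{1-1/e} \log N)$, which is $o(N)$. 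Summing the negligible contribution over the finitely many cosets and finitely many $f_i$, one concludes that most integers in $[-N,N]$ lie outside every $\pi_i(X_i(\Q))$; in particular the complement is Zariski dense, so $\mathbb{P}^1(\Q)$ is not thin.

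\textbf{Where the difficulty concentrates.} The genuine analytic content is the Puiseux counting bound; everything else is formal Galois theory and reductions. In an exposition one could equivalently invoke the Chebotarev density theorem applied to the Galois closure of $X_i \to \mathbb{P}^1$: a positive density of primes $p$ have Frobenius not conjugate into the stabilizer, and a strong approximation argument then produces integer specializations with full Galois group. I would favor the Puiseux approach here as it is the most self-contained for number fields.
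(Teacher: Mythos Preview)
The paper does not actually prove this theorem: it simply cites \cite{SerreMW}*{Sections 9.4 and 9.6} for the reduction to \(\Q\) and the proof over \(\Q\), and \cite{FJ-FieldArithmeticThird}*{Theorem 13.4.2} for a more general argument. Your sketch follows precisely the Serre route that the paper points to, so in that sense you are aligned with the paper's intended reference.

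Two places in your sketch need tightening. First, your reduction to \(\Q\) is misphrased: showing that \(\Omega \cap \PP^1(\Q)\) is thin in \(\PP^1(\Q)\) is not what you want, since \(\Omega\) could cover \(\PP^1(k)\) while meeting \(\PP^1(\Q)\) in a thin set. The correct Weil-restriction argument identifies \(\PP^1(k)\) with the \(\Q\)-points of the rational \(\Q\)-variety \(\Res_{k/\Q}\PP^1_k\) and shows that the restriction of a thin cover remains thin there; this is what Serre does in Section~9.4. Second, your claim that the Puiseux ramification index satisfies \(e>1\) is false in general (e.g.\ \(x^2 - t^2 - 1\) has unramified branches at infinity), and irreducibility of \(f_i\) over \(\Q(t)\) does not force ramification at \(\infty\). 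Serre's counting argument in Section~9.6 handles the unramified branches separately by observing that a Laurent series with irrational algebraic coefficients cannot take rational values too often; you should not collapse both cases into the \(e>1\) bound. With those two corrections, your outline is the standard proof the paper defers to.
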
 
    
    \noindent See \cite{SerreMW}*{Section 9.4}  for the reduction to \(k=\Q\) and \cite{SerreMW}*{Section 9.6} for a proof of this result when \(k=\Q\).  For a proof that works in greater generality see \cite{FJ-FieldArithmeticThird}*{Theorem 13.4.2}.

    \begin{proof}[Proof of Proposition~\ref{prop:hilbert}]
        Let \(G\) be the Galois group of the Galois closure of \(\kk(X)/\kk(\PP^n)\).  Since \(X\) is irreducible, \(G\) is a transitive subgroup of \(S_d\).
        Spread out the Galois closure of \(\kk(X)/\kk(\PP^n)\) to a morphism \(Y \to \PP^n_k\) that factors \(Y \to X \xrightarrow{\pi} \PP^n\). By Lemma~\ref{lem:Gal_specialization}, there exists a thin set \(\Omega \subset \PP^n(k)\) such that for all \(t \in \PP^n(k) \smallsetminus \Omega\), the fiber \(Y_t\) is a single closed point with Galois group over~\(k\) equal to \(G\).  In particular, \(X_t\) must also be a single closed point of degree~\(d\) for all \(t \in \PP^n(k) \smallsetminus \Omega\).  Since \(k\) is Hilbertian by Theorem~\ref{thm:numfield_hilbertian}, it follows from Lemma~\ref{lem:comp_thin_dense} that \(\PP^n(k) \smallsetminus \Omega\) is Zariski dense in \(\PP^n\).
    \end{proof}

\subsection{Proof of Proposition~\ref{prop:Hilbert_E}}

To prove Proposition~\ref{prop:Hilbert_E}, let us try to imitate the proof of Proposition~\ref{prop:hilbert} in this context: let
\(\widetilde{C} \to C \xrightarrow{\pi} E\) be a Galois cover with Galois group \(G\).  
If we could show that for any nontrivial subgroup \(H \subsetneq G\), the set \(E(k) \smallsetminus \im((\widetilde{C}/H)(k))\) is Zariski dense, then we could conclude as in the proof of Proposition~\ref{prop:hilbert}.  This is immediate by Faltings's Curve Theorem if \(\widetilde{C}/H \to E\) is a ramified cover, since then \(\widetilde{C}/H\) is a nice curve of genus at least \(2\).  However, even though our original map \(\pi \colon C \to E\) does not factor through a nontrivial connected \'etale cover, the same does not need to be true of a Galois closure \(\widetilde{\pi} \colon \widetilde{C} \to E\).  
We will reduce the proof of Proposition~\ref{prop:Hilbert_E} to the following special case where the above strategy immediately succeeds.

\begin{lemma}\label{lem:Hilbert_E_special}
    Let \(\pi \colon C \to E\) be a degree~\(d\) cover of a positive rank elliptic curve by a nice curve.  Suppose that \(\widetilde{C} \to C \to E\) is a Galois cover such that for any nontrivial subcover \(\widetilde{C} \to C'\to E\) either
    \[
     C'\to E \textup{ is ramified,}\quad \textup{or} \quad C'(k) = \emptyset.
    \]
There exists a Zariski dense locus of points \(t \in E(k)\) for which \(\pi^{-1}(t)\) is a degree~\(d\) point.
\end{lemma}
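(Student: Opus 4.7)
The plan is to imitate the Galois-theoretic proof of Hilbert's Irreducibility Theorem (Proposition~\ref{prop:hilbert}), using the hypothesis on subcovers to replace the thin-set argument with the elementary observation that the bad locus on \(E\) turns out to be \emph{finite}, while \(E(k)\) is infinite because \(E\) has positive rank.

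First I would set up the Galois dictionary. Let \(G \colonequals \Gal(\widetilde{C}/E)\) and let \(H < G\) be the subgroup with \(\widetilde{C}/H = C\), so that \([G : H] = d\). For each \(t \in E(k)\) outside the finite branch locus of \(\widetilde{C} \to E\), a choice of geometric point above \(t\) determines a decomposition subgroup \(D_t < G\), well-defined up to conjugation, and the scheme \(\pi^{-1}(t) = \widetilde{C}_t/H\) is identified with the \(D_t\)-set \(G/H\). Thus \(\pi^{-1}(t)\) is a single degree~\(d\) closed point if and only if \(D_t \cdot H = G\), a condition invariant under conjugating \(D_t\).

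Next I would translate the bad locus into rational points on intermediate covers. The key observation to verify is that, for a subgroup \(H' < G\), some conjugate of \(D_t\) lies in \(H'\) precisely when \(t\) lies in the image of \(C_{H'}(k) \to E(k)\), where \(C_{H'} \colonequals \widetilde{C}/H'\). This gives
\[
\{t \in E(k) : D_t \cdot H \neq G\} \;=\; \bigcup_{H'} \im\bigl(C_{H'}(k) \to E(k)\bigr),
\]
where \(H'\) ranges over representatives of the (finitely many) conjugacy classes of proper subgroups of \(G\) with \(H' \cdot H \neq G\). Then I would invoke the hypothesis: each \(\widetilde{C} \to C_{H'} \to E\) is a nontrivial subcover, so either \(C_{H'} \to E\) is ramified or \(C_{H'}(k) = \emptyset\). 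In the ramified case, Riemann--Hurwitz combined with \(g(E) = 1\) gives \(2g(C_{H'}) - 2 = \deg R > 0\) for the ramification divisor \(R\), which forces \(\deg R \geq 2\) and hence \(g(C_{H'}) \geq 2\), so \(C_{H'}(k)\) is finite by Faltings's Curve Theorem (Theorem~\ref{thm:Faltings83}). Either way \(\im(C_{H'}(k) \to E(k))\) is finite.

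The bad locus is therefore a finite union of finite sets, hence finite. Since \(E\) has positive rank, \(E(k)\) is Zariski dense in \(E\), so removing the finite bad locus and the finite branch locus still leaves a Zariski dense subset of \(E(k)\), over each point of which \(\pi^{-1}(t)\) is a degree~\(d\) closed point. The step I expect to demand the most care is the Galois-theoretic identification of the bad locus with the union of the images \(\im(C_{H'}(k) \to E(k))\), which requires keeping track of the conjugation ambiguity in \(D_t\); once that is in place, the rest is a formal consequence of the hypothesis together with Faltings's Curve Theorem and Riemann--Hurwitz.
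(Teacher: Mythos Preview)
Your proposal is correct and follows essentially the same approach as the paper: identify the bad locus in \(E(k)\) as a finite union of images \(\im((\widetilde{C}/H')(k)\to E(k))\) over proper subgroups \(H'\), use the hypothesis together with Riemann--Hurwitz and Faltings's Curve Theorem to show each such image is finite, and conclude by the positive rank of \(E\). The paper is terser---it simply ranges over \emph{all} proper \(H'\subsetneq G\) (forcing \(D_t=G\)) rather than only those with \(H'H\neq G\), and omits the Riemann--Hurwitz computation---but the argument is the same.
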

\begin{proof}
    Let \(G = \Gal(\kk(\widetilde{C})/\kk(E))\) and let \(H \subsetneq G\).  By assumption, either \(\widetilde{C}/H \to E\) is ramified, or \((\widetilde{C}/H)(k) = \emptyset\).  In either case, we have \(\#(\widetilde{C}/H)(k) <  \infty\).  Taking the finite union over possible \(H\), we obtain \(T \colonequals E(k) \smallsetminus \bigcup_{H \subsetneq G} \im((\widetilde{C}/H)(k))\) is infinite and hence Zariski dense.  For all \(t \in T\), the fiber \(\widetilde{C}_t\) is a single closed point of degree~\(d\) (and \(\Gal(\kk(\widetilde{C}_t)/k) = G\)), and hence the same is true of \(C_t\).
\end{proof}

To deduce Proposition~\ref{prop:Hilbert_E} from Lemma~\ref{lem:Hilbert_E_special}, we need a criterion to be able to witness when a morphism factors through a nontrivial connected \'etale cover.

\begin{lemma}[cf.~\cite{cdjlz}*{Lemma 4.4}] \label{lem:not_factoring_equivalent}
    Let \(C/k\) be a nice curve and let \(\pi \colon C \to E\) be a finite cover of an elliptic curve \(E/k\).  The following are equivalent:
    \begin{enumerate}
    \item\label{part:dnf} The cover \(\pi\) does not factor through a nontrivial connected \'etale cover.
    \item For all connected \'etale covers \(\psi \colon C' \to E\), the fiber product \(\psi^*C \colonequals C \times_E C'\) is connected.
    \end{enumerate}
\end{lemma}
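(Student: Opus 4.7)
The plan is to prove the two implications separately, with the direction $(2) \Rightarrow (1)$ being a straightforward argument using the graph of the factorizing map, and $(1) \Rightarrow (2)$ requiring a more involved Galois-theoretic argument. For $(2) \Rightarrow (1)$, I will argue the contrapositive: suppose $\pi$ factors as $C \xrightarrow{\phi} C'' \xrightarrow{\psi} E$ with $\psi$ a nontrivial connected \'etale cover. Then the graph $(\mathrm{id}_C, \phi) \colon C \to C \times_E C''$ is a section of the first projection $p_1 \colon C \times_E C'' \to C$, which is finite \'etale as a base change of $\psi$. A section of a finite \'etale morphism is a closed immersion that is itself \'etale, hence an open-and-closed immersion; since $p_1$ has degree $\deg(\psi) \geq 2$ while the section has degree $1$, its image is a proper clopen subscheme, and $C \times_E C''$ is disconnected.

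For $(1) \Rightarrow (2)$, I also argue by contrapositive: starting from a connected \'etale cover $\psi \colon C' \to E$ with $C \times_E C'$ disconnected, I will produce a nontrivial connected \'etale intermediate factor of $\pi$. First replace $\psi$ by its Galois closure $\widetilde{\psi} \colon \widetilde{C}' \to E$, which is still connected and \'etale; since $C \times_E C'$ is a quotient of $C \times_E \widetilde{C}'$ by $\Gal(\widetilde{C}'/C')$ acting on the second factor, disconnectedness is preserved, so $C \times_E \widetilde{C}'$ is also disconnected. Let $G := \Gal(\widetilde{C}'/E)$ act on $C \times_E \widetilde{C}'$ through the second factor. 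I will verify this action permutes the connected components transitively: every component $D$ surjects onto $C$ under $p_1$ (being a clopen subscheme of a finite \'etale cover of the connected curve $C$), and $G$ acts simply transitively on each fiber of $p_1$, so the $G$-translates of $D$ meet every fiber and hence exhaust all components.

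Let $D$ be a connected component and $H \leq G$ its stabilizer. The induced $H$-action on $D$ is free (restricting the free $G$-action on fibers of $p_1$), and the quotient is $D/H = C$, exhibiting $D \to C$ as a Galois \'etale cover with group $H$. Moreover, the restriction of the second projection $p_2 \colon C \times_E \widetilde{C}' \to \widetilde{C}'$ to $D$ is $H$-equivariant (with $H$ acting on $\widetilde{C}'$ as a subgroup of $G$), so it descends to a morphism $C = D/H \to \widetilde{C}'/H$ over $E$. If $H = G$ then $D$ would be the unique component, contradicting disconnectedness; hence $H \subsetneq G$, and $\widetilde{C}'/H \to E$ is a connected (quotient of connected $\widetilde{C}'$) \'etale cover of degree $[G:H] \geq 2$ through which $\pi$ factors, contradicting (1).

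The main obstacle I anticipate is cleanly verifying the transitivity of the $G$-action on components of $C \times_E \widetilde{C}'$ together with the correlated freeness of the $H$-action on $D$; both rely on tracking how the simply transitive $G$-action on fibers of the Galois \'etale cover $\widetilde{C}' \to E$ interacts with the base change to $C$. Once this bookkeeping is in place, identifying $C \to \widetilde{C}'/H \to E$ as the desired factorization is a direct application of Galois descent for \'etale covers.
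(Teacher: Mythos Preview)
Your proof is correct. The easy direction $\neg(1)\Rightarrow\neg(2)$ is essentially the same as the paper's: you use the graph of the factoring map as a proper clopen component of $C\times_E C''$, while the paper first observes that the diagonal splits $C''\times_E C''$ and then pulls back; these are the same idea.

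For the harder direction $\neg(2)\Rightarrow\neg(1)$ your route is genuinely different from the paper's. You pass to the Galois closure $\widetilde{C}'\to E$, use that $G=\Gal(\widetilde{C}'/E)$ permutes the components of $C\times_E\widetilde{C}'$ transitively (because $p_1$ is a $G$-torsor over the connected curve $C$), and then take the quotient $\widetilde{C}'/H$ by the stabilizer $H\subsetneq G$ of a component to obtain the intermediate \'etale cover. This is the classical covering-space argument and is clean once the Galois reduction is made. The paper instead avoids the Galois closure entirely: given any component $X\subsetneq\psi^*C$ mapping with degree $m<N$ onto $C$, it sends $p\in C$ to the length-$m$ fiber $\varphi^{-1}(p)\subset\psi^{-1}(\pi(p))$, producing a map $C\to\Sym^m_{C'/E}$ into the relative symmetric power; the image is a connected component $C''$ and $C''\to E$ is nontrivial because $\Sym^m_{C'/E}\to E$ has no section when $m<N$. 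Your approach buys conceptual transparency (it is pure Galois descent), while the paper's buys economy (no auxiliary Galois closure; the original $\psi$ already yields the factorization).
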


\begin{remark}\label{rem:base_extend}
Since the union of Galois conjugates of an \'etale cover defined over a nontrivial extension \(k'/k\) is a connected \'etale cover defined over \(k\), part \eqref{part:dnf} holds over \(\kbar\) if and only if it holds over \(k\). 
\end{remark}

\begin{proof}
Given a connected \'etale cover \(\psi\colon C' \to E\), the pullback \(\psi^*C' = C' \times_E C'\) is not connected since the diagonal \(C'\) is a nontrivial connected component.
If \(\pi\) factors through a connected \'etale cover \(\psi\) then it follows that \(\psi^*C\) also cannot be connected, since it dominates a disconnected curve.

Conversely, assume that there exists a nontrivial connected \'etale cover \(\psi \colon C' \to E\) of degree~\(N > 1\) for which \(\psi^*C\) is disconnected.  Let \(X \subsetneq \psi^*C\) be a nontrivial connected component, and assume that \(X\) maps with degree~\(m < N\) onto \(C\).  Call this map \(\varphi \colon X \to C\).
\begin{equation}\label{eq:diagX}
    \begin{tikzcd}
        {\psi}^* C \supsetneq X \arrow[r, "\text{degree~\(m\)}" {swap}, "\varphi"] \arrow[d, "f"] & C \arrow[d, "\pi"]\\
        C' \arrow[r, "{\psi}", "\text{degree~\(N\)}" {swap}] & E
    \end{tikzcd}
\end{equation}
 Consider the \(m\)th relative symmetric power \(\Sym^m_{C'/E}\) defined as the pullback of \(\Sym^m_{C'} \to \Sym^m_E\) along the diagonal \(E \hookrightarrow\Sym^m_E\).  This is a finite (not necessarily connected) \'etale cover of \(E\). 
 
 Further, we claim that \(\Sym^m_{C'/E}\to E\) has no section. Assume otherwise and let \(\sigma \colon E \to \Sym^m_{C'/E}\) be a section.  Then, we may consider the morphism \(\sigma^*\mathcal{U}^m_{C'}\to C'\). Since \(m<N\), this morphism is not surjective. On the other hand, the composition \(\sigma^*\mathcal{U}^m_{C'}\to C'\to E\) must be surjective for \(\sigma\) to be a section.  Since \(C'\) is connected, this gives a contradiction.

Let \(x\) be a point of \(\varphi^{-1}(p)\) for some \(p \in C\).  Since the diagram \eqref{eq:diagX} is commutative, we have \(\pi(\varphi(x)) = \psi(f(x))\). In particular, the points of \(f(\varphi^{-1}(p))\) lie in the same fiber of \(\psi\).
Since \(\varphi\) has degree~\(m\), we obtain a map \(C \to \Sym^m_{C'/E}\) factoring \(\pi\).
  Since \(C\) is connected, the image is a single connected component \(C'' \subset  \Sym^m_{C'/E}\).  The map \(C'' \to E\) is a nontrivial connected \'etale cover since \(\Sym^m_{C'/E} \to E\) is \'etale without a section.  The map \(\pi\) therefore factors \(C \to C'' \to E\), as desired. 
\end{proof}

\begin{proof}[Proof of Proposition~\ref{prop:Hilbert_E}]
Let \(\varphi \colon E' \to E\) be the maximal connected \'etale cover with the property that \(E'(k) \neq \emptyset\) through which \(\widetilde{C} \to E\) factors; that is, we have a factorization \(\widetilde{C} \to E' \xrightarrow{\varphi} E\), and \(\widetilde{C} \to E'\) does not factor through any nontrivial connected \'etale covers that have \(k\)-points.  Pullback the cover \(\pi \colon C \to E\) by \(\varphi\):
\begin{center}
\begin{tikzcd} 
       & \varphi^* C \arrow[d] \arrow[r] & E' \arrow[d, "\varphi"]\\
     \widetilde{C} \arrow[r] \arrow[ru, bend left = 30] &C \arrow[r, "\pi"] & E
\end{tikzcd}
\end{center}
Since \(C \to E\) does not factor through a nontrivial connected \'etale subcover, the pullback \(\varphi^*C\) is connected by Lemma~\ref{lem:not_factoring_equivalent}.  Hence, the map \(\widetilde{C} \to \varphi^*C\) is dominant.
Since \(\widetilde{C} \to E\) is Galois, so is \(\widetilde{C} \to E'\).  Furthermore, up to possibly changing the origin on \(E\) by translation, we may assume that \(E' \to E\) is an isogeny.  Hence, \(E'\) is again an elliptic curve with positive rank.  It now follows by Lemma~\ref{lem:Hilbert_E_special}
that there exists a Zariski dense set \(T' \subset  E'(k)\) for which \((\varphi^*C)_t\) is a degree~\(d\) point for all \(t \in T'\).  We have \(C_{\varphi(t)} = \varphi^*C_{t}\), so the Proposition holds for the Zariski dense set \(\varphi(T) \subset  E(k)\).
\end{proof}

\begin{remark}
   The essential property about (positive rank) elliptic curves that we used was that for any \emph{ramified} cover \(f \colon Y \to E\), the locus \(E(k) \smallsetminus f(Y(k))\) was Zariski dense.  This was immediate from Faltings's Curve Theorem.  

   Generalizing to higher-dimensional bases, this becomes a highly nontrivial statement.  A variety \(X/k\) is said to satisfy the \defi{weak Hilbert property over \(k\)} if for any finite collection of finite ramified morphisms \(f_i \colon Y_i \to X\) for \(Y_i\) integral and normal, the locus \(X(k) \smallsetminus \bigcup_i f_i(Y_i(k))\) is Zariski dense.  In \cite{cdjlz} it is proved that abelian varieties of arbitrary dimension over a number field \(k\) with Zariski dense \(k\)-points satisfy the weak Hilbert property over \(k\), and the higher-dimensional generalization of Proposition~\ref{prop:Hilbert_E} holds \cite{cdjlz}*{Theorem 1.4}.
\end{remark}

\section{Asymptotics of the density degree set on varieties of arbitrary dimension}\label{sec:Asymptotics}\label{app:GCDdendegs}
  

In this appendix we show that asymptotic results about the density degree set of a curve over a number field as in Proposition~\ref{prop:DenDegLarge} imply similar statements for all positive-dimensional nice varieties defined over a number field.  The claim about degree sets appears (in a slightly more general context) in \cite{GabberLiuLorenzini}*{Proposition 7.5}; below we show that the strategy of proof in op.cit. implies the same result about the \emph{density} degree set as well.

\begin{prop}\label{prop:large_degree_density}
    Let \(X\) be a nice variety of dimension at least \(1\) defined over a number field \(k\).  The degree set \(\calD(X/k)\) and the density degree set \(\dendegs(X/k)\) contain all sufficiently large multiples of the index \(\ind(X/k)\).  In particular, \(\ind(X/k) = \gcd(\dendegs(X/k))\).
\end{prop}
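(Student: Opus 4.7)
My plan is to reduce the statement to the curve case (Proposition~\ref{prop:DenDegLarge}) by slicing $X$ with complete intersection curves \`a la Bertini. If $\dim X = 1$ the result is Proposition~\ref{prop:DenDegLarge} itself, so I can assume $\dim X \geq 2$. Since $\ind(X/k)$ is by definition the greatest common divisor of $\calD(X/k)$, there is a finite collection of closed points $S = \{x_1, \ldots, x_r\} \subset X$ with $\gcd_i \deg(x_i) = \ind(X/k)$.

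First I would fix a projective embedding $X \hookrightarrow \PP^N$ and choose an integer $e$ large enough that for any closed point $u \in X$, the linear system of degree-$e$ hypersurfaces vanishing on $S \cup \{u\}$ is base-point free outside of $S \cup \{u\}$. By a classical Bertini argument over the infinite field $k$, a general complete intersection of $\dim X - 1$ such hypersurfaces cuts out an irreducible curve $C_u \subset X$ containing $S \cup \{u\}$; by the adjunction formula its geometric genus is bounded by a constant $g_0$ depending only on $\deg X$, $N$, and $e$, and in particular not on $u$. The containments $S \subset C_u \subset X$ then force $\ind(X/k) \mid \ind(C_u/k) \mid \gcd_i \deg(x_i) = \ind(X/k)$, so $\ind(C_u/k) = \ind(X/k)$.

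Next, applying Proposition~\ref{prop:DenDegLarge} to $C_u$ shows that for any multiple $d$ of $\ind(X/k)$ with $d \geq \max(2 g_0, 1)$, the degree-$d$ points of $C_u$ are Zariski dense in $C_u$. To conclude that such a $d$ lies in $\dendegs(X/k)$, let $V \subset X$ be any nonempty open subset and choose a closed point $u \in V$. Then $V \cap C_u$ is a nonempty open subset of $C_u$, and hence must contain a degree-$d$ point of $C_u$; this is automatically a degree-$d$ point of $X$ lying in $V$. Thus degree-$d$ points are Zariski dense in $X$, so $d \in \dendegs(X/k) \subset \calD(X/k)$.

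For the final assertion, the containment $\dendegs(X/k) \subset \calD(X/k)$ gives $\ind(X/k) \mid \gcd(\dendegs(X/k))$ trivially, while the fact that two consecutive multiples $N\ind(X/k)$ and $(N+1)\ind(X/k)$ both lie in $\dendegs(X/k)$ for $N$ sufficiently large forces $\gcd(\dendegs(X/k)) \mid \ind(X/k)$, yielding equality. The main obstacle is producing the Bertini curves $C_u$ over $k$ with a \emph{uniform} geometric genus bound as $u$ varies over $X$; this requires controlling the degree of the defining hypersurfaces independently of $u$, which is possible by working in a sufficiently high Veronese re-embedding so that the linear system of degree-$e$ hypersurfaces through $S \cup \{u\}$ is large enough for Bertini's theorem (irreducibility of a general member) to apply uniformly in $u$.
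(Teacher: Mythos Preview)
Your overall strategy---reduce to the curve case via Bertini-type complete intersection curves through a fixed set $S$ with $\gcd_i \deg(x_i) = \ind(X/k)$---is the same as the paper's. But there is a genuine gap in the step where you ``choose an integer $e$ large enough that for any closed point $u \in X$, the linear system of degree-$e$ hypersurfaces vanishing on $S \cup \{u\}$ is base-point free outside $S \cup \{u\}$.'' No such $e$ exists: a closed point $u$ of degree $m$ imposes $m$ linear conditions on $H^0(\PP^N, \OO(e))$, and $m$ is unbounded as $u$ ranges over all closed points of $X$. For $m > \binom{N+e}{e}$ there are no degree-$e$ hypersurfaces through $u$ at all, let alone a base-point-free system. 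Your proposed fix via Veronese re-embedding does not help, since the number of conditions imposed by $u$ is still $\deg(u)$ regardless of the embedding.

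The paper sidesteps this by never trying to pass the curve through a prescribed point of $X$. Instead it blows up $S$ (so that each $P_i$ becomes an exceptional divisor $E_i \simeq \PP^{n-1}_{\kk(P_i)}$), re-embeds $X'$ so that the $E_i$ are linear, and then takes a single family of complete-intersection curves $X' \cap H_1 \cap \cdots \cap H_{n-1}$ parameterized by $(\check{\PP}^N)^{n-1}$. A flat incidence-correspondence argument shows that as the $H_i$ vary over $k$-rational hyperplanes, these curves sweep out a dense open of $X'$; the linear embedding of $E_i$ guarantees each curve meets $E_i$ in a point of degree $\deg(P_i)$, forcing $\ind(C/k) = \ind(X/k)$. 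Your approach can be repaired by first proving (e.g.\ via a finite dominant map $X \to \PP^n$) that closed points of some \emph{bounded} degree $D$ are already Zariski dense in $X$, and then restricting $u$ to have degree at most $D$---but as written, the uniformity claim for $e$ is false.
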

\begin{proof}
    Let \(n\) be the dimension of \(X\).
     By definition of the index we can find finitely many distinct closed points \(P_1, \dots, P_r \in X\) such that \(\gcd(\deg(P_1), \dots, \deg(P_r)) = \ind(X/k)\).
     Let \(Z = P_1 + \cdots + P_r\) and write \(X' \colonequals \operatorname{Bl}_ZX\) for the blowup of \(X\) along \(Z\).  Let \(E_i\) denote the exceptional divisor over \(P_i\) and let \(E \colonequals E_1 + \cdots + E_r\) denote the total exceptional divisor.  (Since \(P_i\) is a smooth point of an \(n\)-dimensional variety, we have \(E_i \simeq \PP^{n-1}_{\kk(P_i)}\).)  Let \(L\) be an ample line bundle on \(X\).  By {\cite{Hartshorne}*{Proposition 7.10(b)}}, for \(d \gg 0\), the complete linear system of \(\beta^*L^{\otimes d}(-E)\) defines a nondegenerate embedding \(X' \hookrightarrow \PP_k^N\).  Since \(\beta^*L^{\otimes d}(-E)|_{E_i} \simeq \OO_{X'}(-E_i)|_{E_i} \simeq \OO_{\PP^{n-1}_{\kk(P_i)}}(1)\), the exceptional divisors are linearly embedded.

    By Bertini's irreducibility and smoothness theorems, there is a nonempty Zariski open \(U \subset (\check{\PP}^N)^{n-1}\) such that for every tuple \((H_1, \dots, H_{n-1}) \in U(k)\) the intersection \(X' \cap H_1 \cap \cdots \cap H_{n-1}\) is a smooth and geometrically irreducible curve.  The genus \(g\) of the curve \(X' \cap H_1 \cap \cdots \cap H_{n-1}\) is independent of the choice of \((H_1, \dots, H_{n-1}) \in U(k)\). Furthermore, \( E_i \cap H_1 \cap \cdots \cap H_{n-1}\) is a \(0\)-dimensional linear section of \(\PP^{n-1}_{\kk(P_i)}\) for all \(i\) and therefore is a single point with residue field \(\kk(P_i)\).  In particular \(\ind(X' \cap H_1 \cap \cdots \cap H_{n-1}/k)\) divides \(\gcd(\deg(P_1), \dots, \deg(P_r)) = \ind(X/k)\).  

    Thus Proposition~\ref{prop:DenDegLarge} guarantees that there is uniform \(m_0\) such that for every tuple \((H_1, \dots, H_{n-1}) \in U(k)\) and every \(m\geq m_0\), we have \(m \ind(X/k) \in \dendegs(X' \cap H_1 \cap \cdots \cap H_{n-1}/k)\).  We claim that by varying the tuples \((H_1, \dots, H_{n-1})\), we obtain sufficiently many curves to 
    cover a dense open subset of \(X'\).  Since \(X\) and \(X'\) are birational, the same follows for \(X\). Thus, for all \(m\geq m_0\), \(m\ind(X/k) \in \delta(X/k)\).
     
    Now consider the following diagram.
\begin{center}
    \begin{tikzcd}[row sep  = small]
        &
        I_{n-1} \colonequals \{(p, (H_1, \dots, H_{n-1})) \in \PP^N \times (\check{\PP}^N)^r : p \in H_1 \cap \cdots \cap H_r\} \arrow[dl, "\pi_1"] \arrow[dr, "\pi_2", swap] & \\
        \PP^N && (\check{\PP}^N)^{n-1}
    \end{tikzcd}
\end{center}
Proving the claim is equivalent to showing that \(\pi_1(\pi_2^{-1}(U))\) contains a dense open subset of \(X\). Notice that \(I_{n-1}\) is the \((n-1)\)-fold fiber product over \(\PP^N\) of the universal hyperplane 
\(I \colonequals \{(p, H) \in \PP^N \times \check{\PP}^N : p \in H\}\).
Since \(I \to \PP^N\) is a projective bundle, the map \(I_{n-1} \to \PP^N\) is flat (and every fiber is isomorphic to a product of projective spaces). Since \(X\) is projective of dimension \(n\), for every  \((H_1, \dots, H_{n-1}) \in (\check{\PP}^N)^{n-1}\) there exists a point \(p \in X \cap H_1 \cap \cdots \cap H_{n-1}\).  In other words, the map \(\pi_2|_{\pi_1^{-1}(X)}\) is surjective. The preimage \(\pi_2|_{\pi_1^{-1}(X)}^{-1}(U)\) of a nonempty open \(U \subset (\check{\PP}^N)^{n-1}\) is therefore a nonempty open in \(\pi_1^{-1}(X)\).  Since \(\pi_1|_{X}\) is flat (and hence open), the image \(\pi_1(\pi_2|_{\pi_1^{-1}(X)}^{-1}(U))\) is a nonempty open in \(X\).  Since \(X\) is (geometrically) irreducible, this nonempty open is dense.
\end{proof}


	\begin{bibdiv}
		\begin{biblist}

			\bib{AH-Degree3And4}{article}{
   author={Abramovich, Dan},
   author={Harris, Joe},
   title={Abelian varieties and curves in $W_d(C)$},
   journal={Compositio Math.},
   volume={78},
   date={1991},
   number={2},
   pages={227--238},
   issn={0010-437X},
   review={\MR{1104789}},
}

\bib{ACGH}{book}{
   author={Arbarello, E.},
   author={Cornalba, M.},
   author={Griffiths, P. A.},
   author={Harris, J.},
   title={Geometry of algebraic curves. Vol. I},
   series={Grundlehren der mathematischen Wissenschaften [Fundamental
   Principles of Mathematical Sciences]},
   volume={267},
   publisher={Springer-Verlag, New York},
   date={1985},
   pages={xvi+386},
   isbn={0-387-90997-4},
   review={\MR{0770932}},
   doi={10.1007/978-1-4757-5323-3},
}

\bib{BCLV}{article}{
      title={Number fields generated by points in linear systems on curves}, 
      author={Bal\c{c}ik,Irmak},
      author ={Chan, Stephanie},
      author={Liu, Yuan},
      author={Viray, Bianca},
      eprint={2503.07846},
      archivePrefix={arXiv},
      primaryClass={math.NT},
      url={https://arxiv.org/abs/2503.07846}, 
}

\bib{magma}{article}{
   author={Bosma, Wieb},
   author={Cannon, John},
   author={Playoust, Catherine},
   title={The Magma algebra system. I. The user language},
   note={Computational algebra and number theory (London, 1993)},
   journal={J. Symbolic Comput.},
   volume={24},
   date={1997},
   number={3-4},
   pages={235--265},
   issn={0747-7171},
   review={\MR{1484478}},
   doi={10.1006/jsco.1996.0125},
}

\bib{BELOV}{article}{
   author={Bourdon, Abbey},
   author={Ejder, \"{O}zlem},
   author={Liu, Yuan},
   author={Odumodu, Frances},
   author={Viray, Bianca},
   title={On the level of modular curves that give rise to isolated
   $j$-invariants},
   journal={Adv. Math.},
   volume={357},
   date={2019},
   pages={106824, 33},
   issn={0001-8708},
   review={\MR{4016915}},
   doi={10.1016/j.aim.2019.106824},
   label={BELOV19},
}

\bib{cdjlz}{article}{
   author={Corvaja, Pietro},
   author={Demeio, Julian Lawrence},
   author={Javanpeykar, Ariyan},
   author={Lombardo, Davide},
   author={Zannier, Umberto},
   title={On the distribution of rational points on ramified covers of
   abelian varieties},
   journal={Compos. Math.},
   volume={158},
   date={2022},
   number={11},
   pages={2109--2155},
   issn={0010-437X},
   review={\MR{4519542}},
   doi={10.1112/s0010437x22007746},
}

\bib{Creutz--Viray}{article}{
   author={Creutz, Brendan},
   author={Viray, Bianca},
   title={Degrees of points on varieties over Henselian fields},
   journal={Trans. Amer. Math. Soc.},
   volume={378},
   date={2025},
   number={1},
   pages={259--278},
   issn={0002-9947},
   review={\MR{4840304}},
   doi={10.1090/tran/9313},
}

\bib{DF-CounterEx}{article}{
   author={Debarre, Olivier},
   author={Fahlaoui, Rachid},
   title={Abelian varieties in $W^r_d(C)$ and points of bounded degree on
   algebraic curves},
   journal={Compositio Math.},
   volume={88},
   date={1993},
   number={3},
   pages={235--249},
   issn={0010-437X},
   review={\MR{1241949}},
}

\bib{Debarre--Klassen}{article}{
   author={Debarre, Olivier},
   author={Klassen, Matthew J.},
   title={Points of low degree on smooth plane curves},
   journal={J. Reine Angew. Math.},
   volume={446},
   date={1994},
   pages={81--87},
   issn={0075-4102},
   review={\MR{1256148}},
}

\bib{Debes-BeckmannBlack}{article}{
   author={D\`ebes, Pierre},
   title={Galois covers with prescribed fibers: the Beckmann-Black problem},
   journal={Ann. Scuola Norm. Sup. Pisa Cl. Sci. (4)},
   volume={28},
   date={1999},
   number={2},
   pages={273--286},
   issn={0391-173X},
   review={\MR{1736229}},
}

\bib{Derickx-LargeDegree}{article}{
      author={Derickx, Maarten},
      title={Large degree primitive points on curves}, 
      eprint={https://arxiv.org/abs/2409.05796},
      archivePrefix={arXiv},
      primaryClass={math.NT}
}

\bib{Derickx-SingleSource}{article}{
      author={Derickx, Maarten},
      title={Contracting low degree points on curves}, 
      eprint={https://arxiv.org/pdf/2503.12504},
      archivePrefix={arXiv},
      primaryClass={math.NT}
}

\bib{DGH-UniformMordell}{article}{
   author={Dimitrov, Vesselin},
   author={Gao, Ziyang},
   author={Habegger, Philipp},
   title={Uniformity in Mordell--Lang for curves},
   journal={Ann. of Math. (2)},
   volume={194},
   date={2021},
   number={1},
   pages={237--298},
   issn={0003-486X},
   review={\MR{4276287}},
   doi={10.4007/annals.2021.194.1.4},
}

\bib{Faltings-Mordell}{article}{
   author={Faltings, Gerd},
   title={Endlichkeitss\"{a}tze f\"{u}r abelsche Variet\"{a}ten \"{u}ber Zahlk\"{o}rpern},
   language={German},
   journal={Invent. Math.},
   volume={73},
   date={1983},
   number={3},
   pages={349--366},
   issn={0020-9910},
   review={\MR{718935}},
   doi={10.1007/BF01388432},
}

\bib{Faltings-MordellErratum}{article}{
   author={Faltings, Gerd},
   title={Erratum: ``Finiteness theorems for abelian varieties over number
   fields''},
   language={German},
   journal={Invent. Math.},
   volume={75},
   date={1984},
   number={2},
   pages={381},
   issn={0020-9910},
   review={\MR{732554}},
   doi={10.1007/BF01388572},
}

\bib{Faltings-SubvarietiesOfAbelianVarieties}{article}{
   author={Faltings, Gerd},
   title={Diophantine approximation on abelian varieties},
   journal={Ann. of Math. (2)},
   volume={133},
   date={1991},
   number={3},
   pages={549--576},
   issn={0003-486X},
   review={\MR{1109353}},
   doi={10.2307/2944319},
}

\bib{Faltings-GeneralLang}{article}{
   author={Faltings, Gerd},
   title={The general case of S. Lang's conjecture},
   conference={
      title={Barsotti Symposium in Algebraic Geometry},
      address={Abano Terme},
      date={1991},
   },
   book={
      series={Perspect. Math.},
      volume={15},
      publisher={Academic Press, San Diego, CA},
   },
   isbn={0-12-197270-4},
   date={1994},
   pages={175--182},
   review={\MR{1307396}},
}

\bib{Frey}{article}{
   author={Frey, Gerhard},
   title={Curves with infinitely many points of fixed degree},
   journal={Israel J. Math.},
   volume={85},
   date={1994},
   number={1-3},
   pages={79--83},
   issn={0021-2172},
   review={\MR{1264340}},
   doi={10.1007/BF02758637},
}

\bib{FJ-FieldArithmeticThird}{book}{
   author={Fried, Michael D.},
   author={Jarden, Moshe},
   title={Field arithmetic},
   series={Ergebnisse der Mathematik und ihrer Grenzgebiete. 3. Folge. A
   Series of Modern Surveys in Mathematics [Results in Mathematics and
   Related Areas. 3rd Series. A Series of Modern Surveys in Mathematics]},
   volume={11},
   edition={3},
   note={Revised by Jarden},
   publisher={Springer-Verlag, Berlin},
   date={2008},
   pages={xxiv+792},
   isbn={978-3-540-77269-9},
   review={\MR{2445111}},
}

\bib{GabberLiuLorenzini}{article}{
   author={Gabber, Ofer},
   author={Liu, Qing},
   author={Lorenzini, Dino},
   title={The index of an algebraic variety},
   journal={Invent. Math.},
   volume={192},
   date={2013},
   number={3},
   pages={567--626},
   issn={0020-9910},
   review={\MR{3049930}},
   doi={10.1007/s00222-012-0418-z},
}

\bib{Gao}{article}{
      title={Recent developments of the Uniform Mordell--Lang Conjecture}, 
      author={Gao, Ziyang},
      eprint={https://arxiv.org/abs/2104.03431},
      archivePrefix={arXiv},
      primaryClass={math.NT}
}

\bib{GGK-UniformMordellLang}{article}{
   author={Gao, Ziyang},
   author={Ge, Tangli},
   author={K\"uhne, Lars},
   title={The uniform Mordell--Lang conjecture},
   note = {accessed Feb. 2, 2024 at \url{https://www.iazd.uni-hannover.de/fileadmin/iazd/Gao/UML.pdf}}
}

\bib{HS-Degree2}{article}{
   author={Harris, Joe},
   author={Silverman, Joe},
   title={Bielliptic curves and symmetric products},
   journal={Proc. Amer. Math. Soc.},
   volume={112},
   date={1991},
   number={2},
   pages={347--356},
   issn={0002-9939},
   review={\MR{1055774}},
   doi={10.2307/2048726},
}

\bib{Hartshorne}{book}{
   author={Hartshorne, Robin},
   title={Algebraic geometry},
   series={Graduate Texts in Mathematics},
   volume={No. 52},
   publisher={Springer-Verlag, New York-Heidelberg},
   date={1977},
   pages={xvi+496},
   isbn={0-387-90244-9},
   review={\MR{0463157}},
}

\bib{JLY}{book}{
   author={Jensen, Christian U.},
   author={Ledet, Arne},
   author={Yui, Noriko},
   title={Generic polynomials},
   series={Mathematical Sciences Research Institute Publications},
   volume={45},
   note={Constructive aspects of the inverse Galois problem},
   publisher={Cambridge University Press, Cambridge},
   date={2002},
   pages={x+258},
   isbn={0-521-81998-9},
   review={\MR{1969648}},
}

\bib{Kadets--Vogt}{article}{
   author={Kadets, Borys},
   author={Vogt, Isabel},
   title={Subspace configurations and low degree points on curves},
   journal={Adv. Math.},
   volume={460},
   date={2025},
   pages={Paper No. 110021, 36},
   issn={0001-8708},
   review={\MR{4828751}},
   doi={10.1016/j.aim.2024.110021},
}

\bib{Kawamata}{article}{
   author={Kawamata, Yujiro},
   title={On Bloch's conjecture},
   journal={Invent. Math.},
   volume={57},
   date={1980},
   number={1},
   pages={97--100},
   issn={0020-9910},
   review={\MR{0564186}},
   doi={10.1007/BF01389820},
}

\bib{KS-primitive}{article}{
   author={Khawaja, Maleeha},
   author={Siksek, Samir},
   title={Primitive algebraic points on curves},
   journal={Res. Number Theory},
   volume={10},
   date={2024},
   number={3},
   pages={Paper No. 57, 20},
   issn={2522-0160},
   review={\MR{4755198}},
   doi={10.1007/s40993-024-00543-4},
}

\bib{KS-singlesource}{article}{
   author={Khawaja, Maleeha},
   author={Siksek, Samir},
   title={A single source theorem for primitive points on curves},
   journal={Forum Math. Sigma},
   volume={13},
   date={2025},
   pages={Paper No. e6},
   review={\MR{4851706}},
   doi={10.1017/fms.2024.156},
}

\bib{kollar-rational-curves}{book}{
   author={Koll\'ar, J\'anos},
   title={Rational curves on algebraic varieties},
   series={Ergebnisse der Mathematik und ihrer Grenzgebiete. 3. Folge. A
   Series of Modern Surveys in Mathematics [Results in Mathematics and
   Related Areas. 3rd Series. A Series of Modern Surveys in Mathematics]},
   volume={32},
   publisher={Springer-Verlag, Berlin},
   date={1996},
   pages={viii+320},
   isbn={3-540-60168-6},
   review={\MR{1440180}},
   doi={10.1007/978-3-662-03276-3},
}

\bib{KN-HGspecialization}{article}{
   author={K\"onig, Joachim},
   author={Neftin, Danny},
   title={The Hilbert-Grunwald specialization property over number fields},
   journal={Israel J. Math.},
   volume={257},
   date={2023},
   number={2},
   pages={433--463},
   issn={0021-2172},
   review={\MR{4682924}},
   doi={10.1007/s11856-023-2538-0},
}

\bib{Kuhne}{article}{
      author={Kühne, Lars},
      title={Equidistribution in Families of Abelian Varieties and Uniformity}, 
      eprint={https://arxiv.org/abs/2101.10272},
      archivePrefix={arXiv},
      primaryClass={math.NT}
}

\bib{lmfdb}{misc}{
  label    = {LMFDB},
  author       = {The {LMFDB Collaboration}},
  title        = {The {L}-functions and modular forms database},
  howpublished = {\url{https://www.lmfdb.org}},
  year         = {2024},
  note         = {[Online; accessed 19 January 2024]},
}

\bib{LS-UnexpectedQuadratic}{article}{
   author={Laga, Jef},
   author={Swaminathan, Ashvin A.},
   title={A positive proportion of monic odd-degree hyperelliptic curves of
   genus $g\geq4$ have no unexpected quadratic points},
   journal={Int. Math. Res. Not. IMRN},
   date={2024},
   number={19},
   pages={12857--12866},
   issn={1073-7928},
   review={\MR{4805835}},
   doi={10.1093/imrn/rnae184},
}

\bib{Lang-GroupTorsors}{article}{
   author={Lang, Serge},
   title={Algebraic groups over finite fields},
   journal={Amer. J. Math.},
   volume={78},
   date={1956},
   pages={555--563},
   issn={0002-9327},
   review={\MR{0086367}},
   doi={10.2307/2372673},
}

\bib{Lang-NTIII}{book}{
   author={Lang, Serge},
   title={Number theory. III},
   series={Encyclopaedia of Mathematical Sciences},
   volume={60},
   note={Diophantine geometry},
   publisher={Springer-Verlag, Berlin},
   date={1991},
   pages={xiv+296},
   isbn={3-540-53004-5},
   review={\MR{1112552}},
   doi={10.1007/978-3-642-58227-1},
}

\bib{Ma-CubicHypersurfaces}{article}{
   author={Ma, Qixiao},
   title={Closed points on cubic hypersurfaces},
   journal={Michigan Math. J.},
   volume={70},
   date={2021},
   number={4},
   pages={857--868},
   issn={0026-2285},
   review={\MR{4332681}},
   doi={10.1307/mmj/1600329611},
}

\bib{luroth}{article}{
   author={Moh, T. T.},
   author={Heinzer, W.},
   title={On the L\"{u}roth semigroup and Weierstrass canonical divisors},
   journal={J. Algebra},
   volume={77},
   date={1982},
   number={1},
   pages={62--73},
   issn={0021-8693},
   review={\MR{0665164}},
   doi={10.1016/0021-8693(82)90277-0},
}

\bib{MS-canonicalhts}{article}{
   author={M\"uller, Jan Steffen},
   author={Stoll, Michael},
   title={Canonical heights on genus-2 Jacobians},
   journal={Algebra Number Theory},
   volume={10},
   date={2016},
   number={10},
   pages={2153--2234},
   issn={1937-0652},
   review={\MR{3582017}},
   doi={10.2140/ant.2016.10.2153},
}

\bib{Poonen-Gonality}{article}{
   author={Poonen, Bjorn},
   title={Gonality of modular curves in characteristic $p$},
   journal={Math. Res. Lett.},
   volume={14},
   date={2007},
   number={4},
   pages={691--701},
   issn={1073-2780},
   review={\MR{2335995}},
   doi={10.4310/MRL.2007.v14.n4.a14},
}

\bib{Poonen-RatlPoints}{book}{
   author={Poonen, Bjorn},
   title={Rational points on varieties},
   series={Graduate Studies in Mathematics},
   volume={186},
   publisher={American Mathematical Society, Providence, RI},
   date={2017},
   pages={xv+337},
   isbn={978-1-4704-3773-2},
   review={\MR{3729254}},
   doi={10.1090/gsm/186},
}

\bib{Rawson-cyclic}{article}{
    author={Rawson, James},
    title={Cyclic cubic points on higher genus curves},
        eprint={2405.13743},
      archivePrefix={arXiv},
      primaryClass={math.NT},
      url={https://arxiv.org/abs/2405.13743}
}

\bib{SerreMW}{book}{
   author={Serre, Jean-Pierre},
   title={Lectures on the Mordell--Weil theorem},
   series={Aspects of Mathematics},
   edition={3},
   editor={Brown, Martin},
   editor={Waldschmidt, Michel},
   note={Translated from the French and edited by Martin Brown from notes by
   Michel Waldschmidt;
   With a foreword by Brown and Serre},
   publisher={Friedr. Vieweg \& Sohn, Braunschweig},
   date={1997},
   pages={x+218},
   isbn={3-528-28968-6},
   review={\MR{1757192}},
   doi={10.1007/978-3-663-10632-6},
}

\bib{Sharif-PeriodIndexNumberFields}{article}{
   author={Sharif, Shahed},
   title={Period and index of genus one curves over global fields},
   journal={Math. Ann.},
   volume={354},
   date={2012},
   number={3},
   pages={1029--1047},
   issn={0025-5831},
   review={\MR{2983078}},
   doi={10.1007/s00208-011-0745-1},
}

\bib{Siksek}{article}{
   author={Siksek, Samir},
   title={Chabauty for symmetric powers of curves},
   journal={Algebra Number Theory},
   volume={3},
   date={2009},
   number={2},
   pages={209--236},
   issn={1937-0652},
   review={\MR{2491943}},
   doi={10.2140/ant.2009.3.209},
}

\bib{Silverman-AEC}{book}{
   author={Silverman, Joseph H.},
   title={The arithmetic of elliptic curves},
   series={Graduate Texts in Mathematics},
   volume={106},
   edition={2},
   publisher={Springer, Dordrecht},
   date={2009},
   pages={xx+513},
   isbn={978-0-387-09493-9},
   review={\MR{2514094}},
   doi={10.1007/978-0-387-09494-6},
}

\bib{Smith--Vogt}{article}{
   author={Smith, Geoffrey},
   author={Vogt, Isabel},
   title={Low degree points on curves},
   journal={Int. Math. Res. Not. IMRN},
   date={2022},
   number={1},
   pages={422--445},
   issn={1073-7928},
   review={\MR{4366022}},
   doi={10.1093/imrn/rnaa137},
}

\bib{stacks-project}{misc}{
  author       = {The {Stacks project authors}},
  title        = {The Stacks project},
  howpublished = {\url{https://stacks.math.columbia.edu}},
  year         = {2025},
  label={SP}
}

\bib{Tucker}{article}{
   author={Tucker, Thomas J.},
   title={Irreducibility, Brill-Noether loci and Vojta's inequality},
   note={With an appendix by Olivier Debarre},
   journal={Trans. Amer. Math. Soc.},
   volume={354},
   date={2002},
   number={8},
   pages={3011--3029},
   issn={0002-9947},
   review={\MR{1897388}},
   doi={10.1090/S0002-9947-02-02904-5},
}

\bib{Ueno}{article}{
   author={Ueno, Kenji},
   title={Classification of algebraic varieties. I},
   journal={Compositio Math.},
   volume={27},
   date={1973},
   pages={277--342},
   issn={0010-437X},
   review={\MR{0360582}},
}

\bib{VV-code}{misc}{
author = {Viray, Bianca},
       author={Vogt, Isabel},
     TITLE = {Code accompanying ``Isolated and parameterized points on curves''},
      NOTE = {\url{https://github.com/ivogt161/isolated-parameterized-points}},
}

\bib{Vojta-Mordell}{article}{
   author={Vojta, Paul},
   title={Siegel's theorem in the compact case},
   journal={Ann. of Math. (2)},
   volume={133},
   date={1991},
   number={3},
   pages={509--548},
   issn={0003-486X},
   review={\MR{1109352}},
   doi={10.2307/2944318},
}

\bib{Vojta-arithmeticdiscriminant}{article}{
   author={Vojta, Paul},
   title={Arithmetic discriminants and quadratic points on curves},
   conference={
      title={Arithmetic algebraic geometry},
      address={Texel},
      date={1989},
   },
   book={
      series={Progr. Math.},
      volume={89},
      publisher={Birkh\"auser Boston, Boston, MA},
   },
   isbn={0-8176-3513-0},
   date={1991},
   pages={359--376},
   review={\MR{1085268}},
   doi={10.1007/978-1-4612-0457-2\_17},
}

\bib{Wittenberg-PCMI}{article}{
    author={Wittenberg, Olivier},
    title={Park City lecture notes: around the inverse Galois problem},
    note={To appear in IAS/Park City Mathematics Series, AMS.}, 
          url={https://arxiv.org/abs/2302.13719}
}
		\end{biblist}
	\end{bibdiv}
\end{document}